\def\todo#1{\textcolor{Mahogany}%
{\sffamily\footnotesize\newline{\color{Mahogany}\fbox{\parbox{\textwidth-15pt}{#1}}}\newline}}
\renewcommand{\O}{\sO}
\renewcommand{\to}[1][]{\xrightarrow{\ #1\ }}
\newcommand{\into}[1][]{\lhook \joinrel \xrightarrow{\ #1\ }}
\newcommand{\DBDual}[1]{{{\underline \omega}{}^{\mydot}_{#1}}}
\newcommand{\stwo}{${\mathrm{S}_2}$}
\newcommand{\rone}{${\mathrm{R}_1}$}
\newcommand{\sk}[1]{${\mathrm{S}_{#1}}$}
\newcommand{\DB}{Du~Bois\xspace}
\renewcommand{\sHom}[0]{{\mcH\mco\mcm}}
\renewcommand{\sExt}[0]{{\mcE\mcx\mct}}
\renewcommand{\myR}{\mcR}
\def\frm{\mathfrak{m}}
\begin{document}
\numberwithin{equation}{theorem}
\title[Inversion of adjunction for rational and Du~Bois pairs]{Inversion of
  adjunction for rational and Du~Bois pairs}

\thanks{The first named author was supported in part by NSF Grant DMS-0856185 and
  DMS-1301888 and the Craig McKibben and Sarah Merner Endowed Professorship in
  Mathematics at the University of Washington.}

\thanks{The second named author was supported in part by the NSF grant DMS \#1064485,
  NSF FRG Grant DMS \#1501115, NSF CAREER Grant DMS \#1501102 and a Sloan
  Fellowship.}

\begin{abstract}
  We prove several results about the behavior of Du Bois singularities and Du Bois
  pairs in families.  Some of these generalize existing statements about Du Bois
  singularities to the pair setting while others are new even in the non-pair
  setting.  We also prove a new inversion of adjunction result for Du Bois and
  rational pairs.  In the non-pair setting this asserts that if a family over a
  smooth base has a special fiber $X_0$ with Du~Bois singularities and the general
  fiber has rational singularities, then the total space has rational singularities
  near $X_0$.
\end{abstract}

\author[S.~J~Kov\'acs]{S\'andor J Kov\'acs}
\address{SJK: Department of Mathematics, University of Washington,
Box 354350, Seattle, Washington, 98195 USA}
\email{skovacs@uw.edu}

\author[K.~Schwede]{Karl~Schwede}
\address{KS: Department of Mathematics, The University of Utah,
155 S 1400 E Room 233, Salt Lake City, UT 84112, USA}
\email{schwede@math.utah.edu}

\subjclass[2010]{14F18, 14B07, 14D05, 14E15, 14J17, 14B05}

\keywords{Du Bois singularities, rational singularities, inversion of adjunction,
  vanishing theorem}

\maketitle

\section{Introduction}

Rational singularities have been the gold-standard for ``mild'' singularities in
algebraic geometry for several decades.  Whenever a new class of varieties with
singularities is discovered, the first question usually asked is whether or not the
new varieties have rational singularities.  A key reason for this is that varieties
with rational singularities behave cohomologically as if they were smooth.  However,
for many purposes rational singularities are not broad enough. For instance, nodes
are not rational singularities, and more generally, singularities appearing on stable
varieties, that is, mild degenerations of smooth ones that are necessary to consider
in order to compactify moduli spaces are not always rational.  The class of Du Bois
(or DB) singularities is slightly more inclusive than rational singularities.  \DB
singularities behave cohomologically as if they had simple normal crossing
singularities (i.e., a higher dimensional version of nodes).

Recently, Koll\'ar and the first named author
\cite{KovacsDBPairsAndVanishing,KollarKovacsSingularitiesBook} have introduced the
notions of rational and Du Bois pairs $(X, D)$ for a normal variety $X$ and a reduced
divisor $D \subseteq X$.  These notions are philosophically distinct from the
singularities considered typically in the minimal model program since $(X, D)$ having
rational (respectively Du Bois) singularities does not generally imply that the
ambient space $X$ has rational (respectively Du Bois) singularities \cite[Remark
2.81(2)]{KollarKovacsSingularitiesBook} (respectively \autoref{ex.DBPairButNotDB},
\autoref{ex.NormalDBPairButNotDB}, \cite{GK13}).  Instead the singularities of $(X,
D)$ measure the connection between the singularities of $X$ and $D$ (a notion
obviously connected with problems related to inversion of adjunction).  Furthermore,
like \DB singularities, if $(X, Z)$ is a \DB pair then the ideal sheaf of $Z$
satisfies various Kodaira-type vanishing theorems, an observation which we hope will
be useful in the future.

Even though a priori rational and \DB singularities are not part of the class one
usually associates with the minimal model program (mmp), these singularities play important
roles in both the mmp and moduli theory via the fact that (semi-)log canonical
singularities are \DB
\cite{KovacsSchwedeSmithLCImpliesDuBois,KollarKovacsLCImpliesDB}.  In addition, \DB
singularities have also played important roles in various other contexts recently.
They are arguably the largest class of singularities for which we know that Kodaira
vanishing holds \cite{PatakfalviSemiNegativityHodgeBundles}, they appear in proofs of
extension and other vanishing theorems \cite{MR2854859}, positivity theorems
\cite{MR2969273}, categorical resolutions \cite{MR2981713}, log canonical
compactifications \cite{MR3032329} and many other results more directly related to
the minimal model program.

It is now a basic tenet of the minimal model program that the right way to study
singularities is via pairs cf.\
\cite{KollarSingularitiesOfPairs,KollarKovacsSingularitiesBook}. This allows for more
freedom in applications and makes inductive arguments easier. The same is true for
rational and \DB singularities. The introduction of \DB pairs streamlined some
existing proofs cf.\ \cite[Chapter~6]{KollarKovacsSingularitiesBook} and extended the
realm of applications.

In this paper we extend several recent results on \DB singularities to the context of
\DB pairs, notably the recent results on deformations of \DB singularities found in
\cite{KovacsSchwedeDBDeforms} and the requisite injectivity theorem, a result of
Koll\'ar and Kov\'acs on the behavior of depth in \DB families, and the
characterization of Cohen-Macaulay \DB singularities of
\cite{KovacsSchwedeSmithLCImpliesDuBois}.

Furthermore, we prove a new inversion of adjunction statement for rational and \DB
pairs. This statement is new even in the non-pair setting.  Roughly speaking, in the
non-pair setting it says that if $f : X \to B$ is a family over a smooth base such
that the general fiber has rational singularities and the special fiber has \DB
singularities, then $X$ has rational singularities in a neighbourhood of the special
fiber.  See Theorem~E below for the general statement.

We state each of these theorems below.  We begin with the deformation statement.

\begin{theoremA*}[\autoref{thm.MainTheoremDeformation}]
  Let $X$ be a reduced scheme essentially of finite type over $\bC$, $Z \subseteq X$ is a
  reduced subscheme and $H$ a reduced effective Cartier divisor on $X$ that does not
  contain any component of $Z$.  If $(H, Z\cap H)$ is a \DB pair, then $(X,Z)$ is a
  \DB pair near $H$.
\end{theoremA*}

Just as in the non-pair setting, to prove this we first show an injectivity theorem.

\begin{theoremB*}[\autoref{thm.MainInjectivityForPairs}]
  Let $X$ be a reduced scheme over $\bC$ and $Z \subseteq X$ a reduced subscheme.
  Then the natural map
  \[
  \Phi^j : \sExt^j_{\O_X}(\DuBois{X,Z}, \omega_X^{\mydot}) \into
  \sExt^j_{\O_X}(\sI_Z, \omega_X^{\mydot})
  \]
  is injective for every $j\in\bZ$.
\end{theoremB*}
Here $\sExt^j_{\O_X}(\blank, \omega_X^{\mydot})$ is shorthand to denote $\myH^j(\myR
\sHom_{\O_X}(\blank, \omega_X^{\mydot}))$.

We also generalize some of the results of \cite{KollarKovacsLCImpliesDB} for
families to the context of \DB pairs.

\begin{theoremC*}[\autoref{cor.CMClosedDuBoisFamilies}]
  Let $f: (X, Z) \to B$ be a flat projective family with $\O_Z, \sI_Z$ flat over
  $B$ as well.  Assume that all the fiber pairs $(X_b, Z_b)$ are \DB.  Assume also that $V$ is connected and the generic fibers $(\sI_Z)_{\gen}$
  are Cohen-Macaulay, then all the fibers $(\sI_Z)_b$ are Cohen-Macaulay.
\end{theoremC*}

We have a multiplier ideal/module like characterization of \DB pairs.

\begin{theoremD*}[\autoref{thm.KSSPairs}]
  Let $X$ be a normal variety and $Z \subseteq X$ a divisor. Further let $\pi : \tld
  X \to X$ be a log resolution of $(X, Z)$ with $E = \pi^{-1}(Z)_{\red} \vee
  \exc(\pi)$.  If $\sI_Z$ is Cohen-Macaulay then $(X, Z)$ is Du Bois if and only if
  \[
  \pi_* \omega_{\tld X}(E) \simeq \omega_X(Z).
  \]
\end{theoremD*}

All of the results above are used in the proof of our inversion of adjunction result.

\begin{theoremE*}[\autoref{thm.InvOfAdjForDuBoisPairs}]\label{thmE}
  Let $f : X \to B$ be a flat projective geometrically integral family over a smooth connected base $B$ with $\dim B \geq 1$, $H =
  f^{-1}(0)$ the special fiber, and $D$ a reduced codimension $1$ subscheme of $X$
  which is flat over $B$.
  Assume that $(H, D|_H)$ is a \DB pair and that $(X \setminus H, D \setminus H)$ is
  a rational pair.  Then $(X, D)$ is a rational pair.
\end{theoremE*}

This last result is \emph{new} even in the case $D = 0$, see
\autoref{cor.DBRationalDeform}.  In the special case when $X \setminus H$ is smooth
and $D = 0$, Theorem E follows from \cite[Theorem 5.1]{SchwedeEasyCharacterization}.

Statements similar to Theorem E have been proved in many related situations.  For
instance, assume $D = 0$, $X \setminus H$ is canonical and $H$ is semi-log canonical,
then it follows from inversion of adjunction that $X$ has canonical singularities,
see \cite[Theorem 5.1]{KollarShepherdBarron}, \cite[Theorem 2.5]{KaruBoundedness},
\cite{KawakitaInversion}.  Here is a non-exhaustive list of some of other related
results: \cite[Proposition 2.13]{FedderWatanabe}, \cite{SchwedeFAdjunction},
\cite{HaconLCInversionAdjunction} and \cite{Lindsay14}.

\vskip 9pt

\noindent {\sf Acknowledgements:} The authors thank Zsolt Patakfalvi for several
useful discussions and the referee for numerous helpful comments and corrections.

\section{Definitions and basic properties}

\subsection{Rational pairs}

First we recall the notion of rational pairs defined by the first named author and
Koll\'ar as described in \cite[Chapter 2]{KollarKovacsSingularitiesBook}.  Note that
a similar notion was defined by the second named author and Takagi in
\cite{SchwedeTakagiRationalPairs}. The two notions are closely related but different.
Their relationship is similar to how dlt singularities compare to klt singularities.
Here we will discuss the former notion which in the dlt vs klt analogy corresponds to
dlt.


In this subsection we work over an algebraically closed field $k$ although in the
rest of the paper we restrict to working over the complex numbers.

\begin{definition}
  Let $X$ be a normal variety and $D \subseteq X$ an integral Weil divisor on $X$.  A
  \emph{log resolution} $\xymatrix{(Y, D_Y) \ar[r]^\pi & (X, D)}$ is a resolution of
  singularities such that $D_Y$ is the strict transform of $D$, and such that
  $(D_Y)_{\reduced} \cup \exc(\pi)$ is a simple normal crossing divisor.
\end{definition}

\begin{definition}
  A \emph{reduced pair} $(X,D)$ consists of a normal variety $X$ and a reduced
  divisor $D$ on $X$. For the definition of an \emph{snc pair}, the \emph{strata} of
  an snc pair and other normal crossing conditions please refer to
  \cite[Definition 1.7]{KollarKovacsSingularitiesBook}.
\end{definition}

One frequently wants log resolutions that do not blowup unnecessary centers.  One
good way to achieve this is with a thrifty resolution.

\begin{definition}[Thrifty resolution
  \protect{\cite[Definition 2.79]{KollarKovacsSingularitiesBook}}]
  \label{def.ThriftyResolution}
  Let $(X,D)$ be a reduced pair.  A \emph{thrifty resolution} of $(X, D)$ is a
  resolution $\pi : Y \to X$ such that:
  \begin{itemize}
  \item[(a)] $D_Y = \pi^{-1}_* D$ is a simple normal crossing divisor:
  \item[(b)] $\pi$ is an isomorphism over the generic point of every stratum of the
    snc locus of $(X, D)$ and $\pi$ is an isomorphism at the generic point of every
    stratum of $(Y_, D_Y)$.
  \end{itemize}
  Item (b) can also be replaced by:
  \begin{itemize}
  \item[(b$'$)] The exceptional set $E$ of $\pi$ does not contain any stratum of $(Y,
    D_Y)$ and $\pi(E)$ does not contain any stratum of the simple normal crossing
    locus of $(X, D)$.
  \end{itemize}
\end{definition}

  We can now define rational pairs \cite[Section 2.5]{KollarKovacsSingularitiesBook}.

  \begin{definition}[Rational pairs]
  \label{def.RationalPairs}
  A reduced pair $(X, D)$ is called a \emph{rational pair} if there exists a thrifty
  resolution $\pi : (Y, D_Y) \to (X, D)$ such that
  \begin{itemize}
  \item[(i)]  $\O_X(-D) \simeq \pi_* \O_Y(-D_Y)$.
  \item[(ii)]  $\myR^i \pi_* \O_Y(-D_Y) = 0$ for all $i > 0$.
  \item[(iii)]  $\myR^i \pi_* \omega_Y(D_Y) = 0$ for all $i > 0$.
  \end{itemize}
  \end{definition}

  If $(X, D)$ is a rational pair, and is in characteristic zero, then every thrifty
  resolution satisfies the properties (i), (ii), (iii) above \cite[Corollary
  2.86]{KollarKovacsSingularitiesBook}.  Even better though, property (iii) always
  holds in characteristic zero as we point out below, whether or not $(X, D)$ is a
  rational pair.
  \begin{theorem}
  \label{thm.GRVanishingForThrifty}
  Assume that $\mathrm{char}\, k=0$ and let $(X, D)$ be a reduced pair and $\pi : (Y,
  D_Y) \to (X, D)$ a thrifty resolution. Then $\myR^i \pi_* \omega_Y(D_Y) = 0$ for
  all $i > 0$.
  \end{theorem}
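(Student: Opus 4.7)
The plan is to combine Koll\'ar's torsion-freeness theorem for log snc pairs with the thrifty hypothesis in order to preclude any associated prime of $R^i\pi_*\omega_Y(D_Y)$ for $i>0$, forcing this sheaf to vanish. Since the statement is local on $X$, I first shrink $X$ to an affine open and, if necessary, compactify and resolve at infinity to arrange that $\pi$ is projective.

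I then invoke Koll\'ar's torsion-freeness in its log snc form (Koll\'ar's original theorem for $\omega_W$, as extended to snc pairs by Ambro and Fujino): for a projective morphism $f\colon W\to X$ with $W$ smooth and $\Delta$ a reduced snc divisor on $W$, every associated prime of $R^jf_*\omega_W(\Delta)$ is the generic point of the image $f(S)\subseteq X$ of some stratum $S$ of $(W,\Delta)$. Applied to $\pi\colon(Y,D_Y)\to(X,D)$, every associated prime of $R^i\pi_*\omega_Y(D_Y)$ must be $\eta_{\pi(S)}$ for some stratum $S$ of $(Y,D_Y)$. By condition (b) of \autoref{def.ThriftyResolution}, $\pi$ is an isomorphism at the generic point of every stratum $S$ of $(Y,D_Y)$, hence in a Zariski open neighborhood of each $\eta_{\pi(S)}$ in $X$, and on such a neighborhood $R^i\pi_*\omega_Y(D_Y)=0$ for $i>0$. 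Consequently no $\eta_{\pi(S)}$ can actually be an associated prime, so $R^i\pi_*\omega_Y(D_Y)$ has no associated primes for $i>0$ and therefore vanishes.

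The main obstacle is pinning down Koll\'ar--Ambro--Fujino torsion-freeness in exactly the log snc form needed, together with the reduction to a projective setting, since the original statement is for an arbitrary proper birational morphism. The blowup $\operatorname{Bl}_0\bA^2\to\bA^2$ with $D_Y$ equal to the exceptional divisor, for which $R^1\pi_*\omega_Y(D_Y)$ is a nonzero skyscraper at the origin, shows both that this $\pi$ is not thrifty (because $D_Y$ is $\pi$-exceptional, violating (b)) and that images of exceptional strata really can appear as associated primes when thriftiness fails; it is precisely this phenomenon that the thrifty condition is designed to rule out.
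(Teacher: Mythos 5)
Your proposal is correct and takes essentially the same route as the paper's primary proof: the paper establishes \autoref{thm.GRVanishingForThrifty} by invoking precisely this Koll\'ar--Ambro--Fujino torsion-freeness statement for snc pairs, \cite[Theorem 10.39]{KollarKovacsSingularitiesBook} (which holds for proper morphisms, so your projectivization step is unnecessary), and your use of thriftiness to rule out the images of strata as associated primes---using properness and normality of $X$ to pass from ``isomorphism at $\eta_S$'' to ``isomorphism over a neighborhood of $\pi(\eta_S)$''---is exactly how that citation is meant to be applied. The paper also records an alternative elementary argument (push forward $0 \to \omega_Y \to \omega_Y(D_Y) \to \omega_{D_Y} \to 0$ and kill $\myR^i\pi_*\omega_{D_Y}$ by induction on components via Grauert--Riemenschneider), but your argument matches the main one.
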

  \begin{proof}
    This follows from \cite[Theorem 10.39]{KollarKovacsSingularitiesBook}.
    Alternatively, one can prove this directly:
  \begin{claim}\label{iteratedGRV}
    Let $\pi : E \to D$ be a proper birational map between reduced equidimensional
    $\bC$-schemes of finite type such that $E$ is a simple normal crossing divisor in
    some smooth ambient space.  Assume that $\pi$ is birational onto its image when
    restricted to every strata of $E$ (in particular, also each irreducible component
    of $E$).  Then $\myR^i \pi_* \omega_E = 0$ for $i > 0$.
  \end{claim}
  \begin{proof}[Proof of \autoref{iteratedGRV}]
    We proceed by induction on $\dim D$ and the number of irreducible components of
    $E$, and note that the base case is simply Grauert-Riemenschneider vanishing
    \cite{GRVanishing}.  Write $E = E_0 \cup E'$ where $E_0$ is an irreducible
    component of $E$ and $E'$ are the remaining irreducible components.  We have a
    short exact sequence
  \[
  0 \to \O_E \to \O_{E_0} \oplus \O_{E'} \to \O_{E_0 \cap E'} \to 0.
  \]
  Dualizing we obtain
  \[
  0 \to \omega_{E_0} \oplus \omega_{E'} \to \omega_{E} \to \omega_{E_0 \cap E'} \to
  0.
  \]
  Notice that $E_0 \cap E'$ is a simple normal crossings divisor in the smooth
  ambient space $E_0$.  $E_0 \cap E'$ is also a union of strata of $E$ and hence
  $\pi$ is still birational when restricted to each strata of $E' \cap E_0$.
  Applying $\myR^i \pi_*$ and the inductive hypothesis to $E_0$, $E'$ and $E_0 \cap
  E'$ proves the claim.
\end{proof}
  \noindent {\it Alternative Proof of \autoref{thm.GRVanishingForThrifty}.}
  Push forward the short exact sequence $0 \to \omega_Y \to \omega_Y(D_Y) \to
  \omega_{D_Y} \to 0$ via $\pi$ and apply the claim to $\omega_{D_Y}$ and
  $\omega_Y$.  Note that the thrifty resolution hypothesis guarantees that $\pi$ is
  birational when restricted to any strata of $D_Y$ (since it isomorphism at the
  generic point of each strata).
\end{proof}

\noindent
This gives us the following criterion.

\begin{proposition}\label{cor:rational-pairs-criterion}
  Let $(X, D)$ be a reduced pair and $\pi : (Y, D_Y) \to (X, D)$ a thrifty
  resolution. Then $(X, D)$ is a rational pair if and only if
  \[
  \myR \sHom_{\O_X}^{\mydot}(\O_X(-D), \omega_X^{\mydot}) \qis \myR \pi_*
  \omega_Y(D_Y)[\dim X] \qis \pi_* \omega_Y(D_Y)[\dim X]
  \]
  for some thrifty resolution. Furthermore, in characteristic zero the second
  isomorphism is automatic.
\end{proposition}

\begin{proof}
  Observe that the conditions (i) and (ii) of \autoref{def.RationalPairs} are
  equivalent to the isomorphism $\myR \pi_* \O_Y(-D_Y) \qis \O_X(-D)$.  Applying
  Grothendieck duality and condition (iii) to this isomorphism yields the statement.
  The characteristic zero statement is simply \autoref{thm.GRVanishingForThrifty}.
\end{proof}

\subsection{Notation}
\noindent
Throughout the rest of this paper, all schemes will be assumed to be Noetherian
separated schemes and essentially\footnote{that is, a localization of a finite type
  scheme} of finite type over $\bC$.  Given divisors $D = \sum a_i D_i$ and $D' =
\sum b_i D_i$ on a normal variety (possibly allowing $a_i, b_j$ to be zero), we
define $D \vee D' = \sum \max(a_i, b_i) D_i$ and $D \wedge D' = \sum \min(a_i, b_i)
D_i$.  Of course, if $D$ and $D'$ have no common components then $D \vee D' = D+D'$
and $D \wedge D' = 0$.  On a scheme $X$ essentially of finite type over $\bC$, we use
$\myD(\blank) = \myR \sHom_{\O_X}^{\mydot}(\blank, \omega_X^{\mydot})$ to denote the
Grothendieck duality functor.

\subsection{Du Bois pairs}

The notion of Du~Bois singularities is becoming more and more part of the basic
knowledge in higher dimensional geometry. In particular, for the notion of the
Deligne-Du~Bois complex of a scheme of finite type over $\bC$ and its degree zero
associated graded complex, denoted by $\DuBois{X}$, we refer the reader to
\cite[Section 6.1]{KollarKovacsSingularitiesBook}.

In contrast, the notion of \DB pairs is relatively new and so here we discuss some of its basic properties.

Given a (possibly non-reduced) subscheme $Z \subseteq X$ one has an induced map in
$D^b_{\coherent}(X)$,
\[
\DuBois{X} \to \DuBois{Z},
\]
noting that by definition $\DuBois{Z} = \DuBois{Z_{\reduced}}$.  Then $\DuBois{X, Z}$
is defined to be the object in the derived category making the following an exact
triangle:
\begin{equation}
  \label{eq:1}
  \DuBois{X, Z} \to \DuBois{X} \to \DuBois{Z} \xrightarrow{+1}.
\end{equation}
If $\sI_Z$ is the ideal sheaf of $Z$, then it is easy to see that there is a natural
map $\sI_Z \to \DuBois{X,Z}$, \cite[Section 3.D]{KovacsDBPairsAndVanishing}.

\begin{definition}\cite[Definition 3.13]{KovacsDBPairsAndVanishing}
  We say that $(X, Z)$ is a \emph{Du Bois pair} (or simply a \emph{DB pair}) if the
  above map $\sI_Z \to \DuBois{X,Z}$ is a quasi-isomorphism.
\end{definition}

In the original definition of a \DB pair in \cite{KovacsDBPairsAndVanishing} it was
assumed that $Z$ is reduced.  As it turns out, this is not a necessary hypothesis.

\begin{lemma}\label{lem.DBPairImpliesReduced}
  If $(X, Z)$ is a \DB pair and  $X$ is reduced, then $Z$ is reduced.
\end{lemma}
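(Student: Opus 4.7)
The plan is to compare the canonical short exact sequence $0\to\sI_Z\to\sO_X\to\sO_Z\to 0$ with the triangle \eqref{eq:1} defining $\DuBois{X,Z}$, and then run the snake lemma on $\myH^0$.

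First I would observe that $\DuBois{X,Z}$ is supported in non-negative cohomological degrees, since $\DuBois{X}$ and $\DuBois{Z}$ are. Taking the long exact sequence of \eqref{eq:1} and using the \DB pair hypothesis $\sI_Z\qis\DuBois{X,Z}$ (which collapses the cohomology of $\DuBois{X,Z}$ to $\sI_Z$ in degree zero and makes it vanish elsewhere) gives a short exact sequence
$$0\to\sI_Z\to\myH^0(\DuBois{X})\to\myH^0(\DuBois{Z})\to 0.$$

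Next I would assemble a morphism of distinguished triangles from $\sI_Z\to\sO_X\to\sO_Z\xrightarrow{+1}$ into the triangle defining $\DuBois{X,Z}$, using the canonical maps $\sO_X\to\DuBois{X}$, $\sO_Z\to\DuBois{Z}$, and the natural map $\sI_Z\to\DuBois{X,Z}$ of \cite[Section~3.D]{KovacsDBPairsAndVanishing}. Passing to $\myH^0$ and invoking the \DB pair assumption makes the leftmost vertical arrow in the resulting commutative diagram of short exact sequences the identity on $\sI_Z$. The key external input is that for any reduced scheme $Y$ of finite type over $\bC$ the canonical map $\sO_Y\to\myH^0(\DuBois{Y})$ is injective; applied to the reduced scheme $X$, this makes the middle vertical arrow injective. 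A snake-lemma chase then forces $\sO_Z\to\myH^0(\DuBois{Z})$ to be injective as well.

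Because $\DuBois{Z}=\DuBois{Z_{\reduced}}$, this injective map factors through the canonical surjection $\sO_Z\twoheadrightarrow\sO_{Z_{\reduced}}$, so that surjection must also be injective, hence an isomorphism. This gives $Z=Z_{\reduced}$, which is exactly the desired conclusion. The only step that is not a formal diagram/triangle chase is the injectivity of $\sO_Y\hookrightarrow\myH^0(\DuBois{Y})$ for reduced $Y$; this is a standard property of the Deligne-Du Bois complex (essentially, $\sO_Y$ embeds into the structure sheaf of the seminormalization), and is the main thing one needs to pin down a citation for.
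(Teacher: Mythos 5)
Your proposal is correct: the triangle \eqref{eq:1} is concentrated in non-negative degrees, so its $\myH^0$ sequence is left exact; the natural map $\sI_Z \to \DuBois{X,Z}$ of \cite[Section 3.D]{KovacsDBPairsAndVanishing} is constructed precisely so that your morphism of triangles exists; the \DB hypothesis makes the left vertical map an isomorphism, and your chase then gives $\O_Z \into \myH^0(\DuBois{Z}) = \myH^0(\DuBois{Z_{\reduced}})$, which factors through $\O_{Z_{\reduced}}$, forcing $\O_Z \to \O_{Z_{\reduced}}$ to be an isomorphism.

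The route differs from the paper's in its final step, though both rest on the same comparison diagram and the same external input. The paper identifies $\myH^0(\DuBois{X})$ and $\myH^0(\DuBois{Z})$ with the structure sheaves of the seminormalizations $X^{\textnormal{SN}}$ and $Z_{\reduced}^{\textnormal{SN}}$ (citing Saito), so that $\myH^0(\DuBois{X,Z})$ becomes the ideal of the scheme-theoretic image of $Z_{\reduced}^{\textnormal{SN}}$ in $X^{\textnormal{SN}}$, which is a radical ideal; the \DB hypothesis then says $\sI_Z$ equals this ideal, hence $\sI_Z$ is radical in $\O_{X^{\textnormal{SN}}}$ and therefore in $\O_X$, so $Z$ is reduced. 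You instead run a snake-lemma chase and need only the weaker statement that $\O_X \into \myH^0(\DuBois{X})$ is injective for reduced $X$, together with the fact that $\O_Z \to \DuBois{Z}$ factors through $\O_{Z_{\reduced}}$ (which is built into the definition, since $\DuBois{Z} = \DuBois{Z_{\reduced}}$). Your version is slightly more formal and economical in its inputs; the paper's is more explicit about what $\myH^0(\DuBois{X,Z})$ actually is, which is also the form used elsewhere in the paper. Note that the injectivity you flag as the needed citation is exactly the seminormalization statement the paper attributes to \cite{SaitoMixedHodge}, so no new reference is required.
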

\begin{proof}
  Note that $\DuBois{Z} = \DuBois{Z_{\reduced}}$ and so $\DuBois{X, Z} \qis
  \DuBois{X, Z_{\reduced}}$.  On the other hand, we also have an exact sequence:
  \[
  \xymatrix{ 0 \ar[r] & \myH^0(\DuBois{X, Z}) \ar@{.>}[d]_{\simeq} \ar[r] &
    \myH^0(\DuBois{X})
    \ar[d]_{\simeq} \ar[r] & \myH^0(\DuBois{Z}) \ar[d]_{\simeq} \\
    0 \ar[r] & \sI_{\mathrm{Im}\left(Z_{\reduced}^{\textnormal{SN}}\right) \subseteq
      X^{\textnormal{SN}}} \ar[r] & \O_{X^{\textnormal{SN}}} \ar[r]&
    \O_{Z_{\reduced}^{\textnormal{SN}}} }
  \]
  where $X^{\textnormal{SN}}, Z_{\reduced}^{\textnormal{SN}}$ are the
  seminormalizations of $X$ and $Z_{\reduced}$ respectively, and the right two
  isomorphisms come from \cite{SaitoMixedHodge}.  
  Note that the scheme theoretic
  image of $Z_{\reduced}^{\textnormal{SN}}$ in $X^{\textnormal{SN}}$ is reduced.  
  The fact that the left-most vertical map is an isomorphism implies that
  $\myH^0(\DuBois{X, Z})$ is a radical ideal in $\O_{X^{\textnormal{SN}}}$.  Since
  $(X, Z)$ is Du Bois, we see that $\myH^0(\DuBois{X,Z}) = \sI_{Z \subseteq X}$ and
  hence $\sI_{Z \subseteq X}$ is radical in $\O_{X^{\textnormal{SN}}}$ and hence also
  in $\O_X = \O_{X^{\reduced}}$ as desired.
\end{proof}

Frequently we will take the Grothendieck dual of $\DuBois{X,Z}$.  Hence, following
the notation of \cite{KovacsSchwedeDBDeforms} we will write
\begin{equation}
  \label{eq.DBDualDefinition}
  \DBDual{X,Z} := \myR \sHom_{\O_X}^{\mydot}(\DuBois{X,Z}, \omega_X^{\mydot}).
\end{equation}

The reader is referred to \cite[Section 6.1]{KollarKovacsSingularitiesBook} for basic
properties of \DB pairs.  As mentioned in the introduction, this notion of pairs is
somewhat different in flavor from the definition of $(X, Z)$ being log canonical or
log terminal. Being a \DB pair is more a statement about the relationship between $X$
and $Z$, not an absolute statement about the singularities of $X$ or $Z$
separately. In particular, \autoref{ex.DBPairButNotDB} and
\autoref{ex.NormalDBPairButNotDB} show that $(X, Z)$ being \DB \emph{does not} imply
that $X$ is \DB.

\begin{example}[A \DB pair whose ambient space is not \DB]
  \label{ex.DBPairButNotDB}
  Let $R$ denote the pullback of the following diagram:
  \[
  \xymatrix{
  k[x] \simeq k[x,y]/\langle y \rangle & \ar[l] k[x,y] \\
  k[x^2,x^3] \ar[u] & \ar@{.>}[l] R \ar@{.>}[u]
  }
  \]
  where the non-dotted arrows are induced in the obvious ways.  It is easy to see
  that $R = k[x^2, x^3, y, yx]$.  By construction $X = \Spec R$ is not \DB since it
  is not seminormal.  However, we claim that the pair $(\Spec R, V(\langle y, yx
  \rangle_R) )$ is \DB.  Consider the following diagram:
  \[
  \xymatrix{ 0 \ar[r] & \langle y, yx \rangle_R \ar[d]_{\alpha} \ar[r] & R \ar[r]
    \ar[d]_{\beta}& k[x^2, x^3] \ar[d]_{\gamma} \ar[r] & 0\\
    0 \ar[r] & \langle y \rangle_{k[x,y]} \ar[r] & k[x,y] \ar[r] & k[x] \ar[r] & 0.
  }
  \]
  The maps labeled $\beta$ and $\gamma$ are the seminormalizations but $\alpha$ is an
  isomorphism.  On the other hand, we know that $\DuBois{X} = \DuBois{X^{\sn}}$ in
  general since they have the same hyperresolution.  Therefore, up to harmless
  identification of modules with sheaves on an affine scheme, we see $\DuBois{\Spec
    R} \qis k[x,y]$ and $\DuBois{\Spec k[x^2, x^3]} \qis k[x]$ and so
  \[
  \langle y, yx \rangle_R = \langle y \rangle_{k[x,y]} \qis \DuBois{\Spec R,
    V(\langle y, yx \rangle_R)}.
  \]
  This proves that $(\Spec R, V(\langle y, yx \rangle_R) )$ is \DB and completes the
  example.
\end{example}

Next we will give an example of a normal \DB pair whose ambient space is not \DB. To
this end we will use a criterion for a cone being a \DB pair. In order to do that we
need to recall a definition \cite[III.3.8]{KollarKovacsSingularitiesBook}:

Let $X$ be a projective scheme and $\sL$ an ample line bundle on $X$. We will need
the \emph{spectrum of the section ring of $\sL$},
$$
C_a(X,\sL):= \Spec_k \bigoplus_{p\geq 0} H^0(X,\sL^p),
$$
which is also called the (generalized) \emph{ample cone} over $X$ with co-normal
bundle $\sL$.  If no confusion is likely, in particular when $\sL$ is fixed, we will
use the shorthand of $CX:=C_a(X,\sL)$.  Notice that for a subscheme $Z\subseteq X$
there is a natural map $\iota\!: C_a(Z, \sL|_Z) \to C_a(X, \sL)$ which is a closed
embedding away from the vertex $P \in CX$. By a slight abuse of notation we will also
use $(CX,CZ)$ to denote the pair $(C_a(X,\sL), \iota(C_a(Z,\sL|_Z)))$.

Now we are ready to state the needed \DB criterion:

\begin{proposition}\textnormal{(\cite{GK13}
    \cf\cite{MaFInjectiveBuchsbaum})}\label{prp:cone DB criterion}
  Let $X$ be a smooth projective variety, $Z \subset X$ an snc divisor (possibly the
  empty set), and $\sL$ an ample line bundle on $X$.  Then $(CX, CZ)$ is a \DB pair
  if and only if
  \[
  H^i(X, \sL^m(-Z)) = 0
  \]
  for all $i, m >0$.
\end{proposition}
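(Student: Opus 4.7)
The plan is to blow up the vertex $P \in CX$ to obtain a log resolution whose total space carries a natural $\mathbb{G}_m$-action, use this to describe $\DuBois{CX, CZ}$ via a cubical hyperresolution of the pair, and then decompose the result by weights to match each weight with a cohomology group $H^i(X, \sL^m(-Z))$.

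First, set $q\colon Y := \mathrm{Tot}_X(\sL^{-1}) \to X$, the total space of $\sL^{-1}$, with zero section $E \cong X$; then $Y \cong \mathrm{Bl}_P CX$ with contraction $\pi\colon Y \to CX$ sending $E$ to $P$ and an isomorphism elsewhere. Let $W := q^{-1}(Z)$ be the strict transform of $CZ$. Because $Z \subset X$ is snc, $(Y, W+E)$ is snc, $\pi\colon (Y, W+E) \to (CX, CZ)$ is a thrifty log resolution, and the snc pairs $(Y, W)$ and $(E, Z)\cong(X, Z)$ satisfy $\DuBois{Y, W} \qis \O_Y(-W)$ and $\DuBois{E, Z} \qis \O_X(-Z)$. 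I would then compute $\DuBois{CX, CZ}$ using the cubical hyperresolution of the pair built from the square $\{(\{P\}, \{P\}) \leftarrow (E, Z) \to (Y, W)\}\to (CX, CZ)$ (here $E \cong X$ and $W \cap E = Z$). Since $\sI_{\{P\}\subset\{P\}}=0$, the cubical construction will yield
\[
\DuBois{CX, CZ} \qis \mathrm{Cone}\!\left(\myR\pi_* \O_Y(-W) \to \myR(\pi|_E)_* \O_X(-Z)\right)[-1],
\]
where the arrow is induced by restriction to $E$. Justifying this formula carefully---i.e., extending the standard hyperresolution description of $\DuBois{\cdot}$ to the pair setting---will be the main technical step.

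The fiberwise $\mathbb{G}_m$-action on $Y$ descends to $CX$, fixes $P$, preserves $CZ$, and all sheaves and morphisms above are equivariant. Since $(CX, CZ)$ is snc (hence \DB) away from $P$, the DB condition is local at the vertex; there, the standard formal-functions computation gives that the weight-$m$ summand of $(\myR\pi_* \O_Y(-W))_P$ equals $\myR\Gamma(X, \sL^m(-Z))$ for $m \geq 0$ and vanishes otherwise, while $(\myR(\pi|_E)_* \O_X(-Z))_P \qis \myR\Gamma(X, \O_X(-Z))$ sits entirely in weight $0$. The restriction map on weight $0$ is the identity, so the weight-$0$ part of the cone vanishes; for $m \geq 1$, the weight-$m$ summand of $\DuBois{CX, CZ}_P$ is $\myR\Gamma(X, \sL^m(-Z))$. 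Meanwhile $\sI_{CZ, P} \qis \bigoplus_{m \geq 1} H^0(X, \sL^m(-Z))$ sits in cohomological degree $0$.

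Finally, the natural map $\sI_{CZ} \to \DuBois{CX, CZ}$ restricts in weight $m \geq 1$ to the inclusion $H^0(X, \sL^m(-Z)) \hookrightarrow \myR\Gamma(X, \sL^m(-Z))$, a quasi-isomorphism iff $H^i(X, \sL^m(-Z)) = 0$ for every $i > 0$; running through all $m \geq 1$ yields the stated equivalence. The principal obstacle is rigorously deriving the displayed formula for $\DuBois{CX, CZ}$, which requires carefully adapting the cubical hyperresolution of a scheme to the pair setting; once that is in hand, the rest is a routine $\mathbb{G}_m$-equivariant calculation.
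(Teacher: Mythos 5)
Your proposal is essentially correct, but it takes a genuinely different route from the paper: the paper gives no direct argument for \autoref{prp:cone DB criterion} at all (it cites \cite[4.4]{MaFInjectiveBuchsbaum} for $Z=\emptyset$ and defers the general case to \cite{GK13}), and the neighboring result it does prove, \autoref{lem.DBPairsForGradedRings}, is obtained algebraically from Ma's identification $\myH^i \DuBois{Y}\simeq [H^{i+1}_{\frm}(S)]_{>0}$, local cohomology spectral sequences, and the injectivity theorem \autoref{thm.MainInjectivityForPairs}. Your geometric route --- resolving by $\pi\colon Y=\mathrm{Tot}_X(\sL^{-1})=\mathrm{Bl}_P CX\to CX$ and decomposing by $\mathbb{G}_m$-weights --- is presumably the ``direct proof'' the paper attributes to \cite{GK13}; it is self-contained (no injectivity theorem, no local cohomology) and yields the stronger statement $\myH^i\DuBois{CX,CZ}\simeq\bigoplus_{m\geq 1}H^i(X,\sL^m(-Z))$ for $i>0$, not just the DB criterion. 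Moreover, the step you flag as the main obstacle is already within reach of the paper's toolkit: when $Z\neq\emptyset$ one has $\{P\}\cup CZ=CZ$, so \autoref{lem.PairEqualsPushdown}(a) with $\Sigma=\{P\}$ gives $\DuBois{CX,CZ}\simeq\myR\pi_*\O_Y(-W-E)$, and pushing forward $0\to\O_Y(-W-E)\to\O_Y(-W)\to\O_E(-Z)\to 0$ is exactly your cone formula; alternatively, apply \cite[2.1]{KollarKovacsLCImpliesDB} to the two blow-up squares $(Y,E,P)\to CX$ and $(W,W\cap E,P)\to CZ$ and take cocones using the triangulated 9-lemma \cite[B.1]{Kovacs11b}. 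Since $CX$ is affine you can also avoid formal functions entirely: $q_*\O_Y(-E)=\bigoplus_{m\geq 1}\sL^m$, so $\myR\Gamma(Y,\O_Y(-W-E))\simeq\bigoplus_{m\geq 1}\myR\Gamma(X,\sL^m(-Z))$ as graded $S$-modules, and comparison with $\sI_{CZ}=\bigoplus_{m\geq 1}H^0(X,\sL^m(-Z))$ finishes the proof as you describe.

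One wrinkle: your cubical square with the vertex entry $(\{P\},\{P\})$ tacitly assumes $P\in CZ$, i.e.\ $Z\neq\emptyset$; note $CZ=C_a(\emptyset,\sL|_\emptyset)=\emptyset$ when $Z=\emptyset$, which the proposition explicitly allows. In that case your cone formula drops the $\O_P$ summand and computes $\DuBois{CX,\{P\}}$ rather than $\DuBois{CX}$, whose weight-zero part at $P$ is $k$, not $0$. The fix is immediate --- replace the vertex entry by $(\{P\},\emptyset)$, i.e.\ take the cocone of $\myR\pi_*\O_Y\oplus\O_P\to\myR\Gamma(X,\O_X)$ --- and the numerical criterion comes out the same, recovering \cite[4.4]{MaFInjectiveBuchsbaum}; but as written the $Z=\emptyset$ case is not covered by your formula.
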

\begin{proof}
  In the case $Z=\emptyset$ this follows from \cite[Theorem
  4.4]{MaFInjectiveBuchsbaum}.  The general case works similarly. For a direct proof
  see \cite[Theorem 2.5]{GK13}.
\end{proof}

While the above is sufficient for our purposes, we also obtained independently a
slightly different statement using similar methods.

\begin{lemma}[\DB pairs for graded rings]
\label{lem.DBPairsForGradedRings}
Let $X$ be a projective variety with \DB singularities, $\sL$ an ample line bundle
and $D$ a reduced connected divisor on $X$. Assume that $D$ also has only \DB
singularities.  Form the corresponding section ring $S = \bigoplus_{i \geq 0}
\Gamma(X, \sL^i)$ and $I = \bigoplus_{i \geq 0} \Gamma(X, \O_X(-D) \otimes \sL^i)$.
Fix $\bm = S_+$ to be the irrelevant ideal.  Set $Y = CX = \Spec S$ and $Z = CD =
\Spec (S/I)$.  If
\begin{equation}
\label{eq.FreeVanishingCohomology}
H^1(X, \O_X(-D) \otimes \sL^i) = 0
\end{equation} for $i \geq 0$ so that $S/I \simeq \bigoplus_{i \geq 0} \Gamma(D,
\sL^i|_D)$ then for all $i \geq 1$ we have
\[
\myH^i (\DuBois{Y,Z}) \simeq [H^{i+1}_{\bm}(I)]_{> 0}.
\]
Again under the hypothesis {\rm \autoref{eq.FreeVanishingCohomology}}, we see
immediately that $(Y, Z)$ is \DB if and only if $[H^i_{\bm}(I)]_{> 0} = 0$ for every
$i > 0$.
\end{lemma}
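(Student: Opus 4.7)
My plan is to compute the cohomology sheaves of $\DuBois{Y,Z}$ by resolving the cone via a single blow-up of its vertex, reducing the \DB complex to data on the projective bases $X$ and $D$, and then reading off the identification with graded local cohomology of $I$.

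I would begin by setting up the commutative diagram of exact triangles
\[
\xymatrix{
\sI_Z \ar[r] \ar[d] & \O_Y \ar[r] \ar[d] & \O_Z \ar[d] \ar[r] & +1 \\
\DuBois{Y,Z} \ar[r] & \DuBois{Y} \ar[r] & \DuBois{Z} \ar[r] & +1,
}
\]
where the top row is short exact by the vanishing hypothesis~\autoref{eq.FreeVanishingCohomology}, which forces $S/I \simeq \bigoplus_{i\geq 0}\Gamma(D,\sL^i|_D)$ and hence identifies $I$ with $\sI_Z$.

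Next, I would compute $\myH^i\DuBois{Y}$ (and analogously $\myH^i\DuBois{Z}$) via the blow-up $\pi\colon\tld Y\to Y$ of the vertex $P$, so that $\tld Y = \Tot(\sL^{-1})$ is a line bundle over $X$ with exceptional divisor $E\simeq X$ the zero section. Both $\tld Y$ and $E$ are smooth over the \DB variety $X$, hence \DB, and the standard descent triangle attached to the blow-up square reduces to
\[
\DuBois{Y} \to k_P \oplus \myR\pi_*\O_{\tld Y} \to \myR\pi_*\O_{E} \to +1.
\]
The projection formula gives $\myR^i\pi_*\O_{\tld Y}|_P \simeq \bigoplus_{n\geq 0}H^i(X,\sL^n)$ and $\myR^i\pi_*\O_E|_P \simeq H^i(X,\O_X)$, with the induced map between them being projection onto the $n=0$ summand. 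Taking the long exact sequence of cohomology sheaves, the $n=0$ pieces cancel and I obtain, for $i\geq 1$,
\[
\myH^i\DuBois{Y} \simeq \bigoplus_{n>0} H^i(X,\sL^n) \simeq [H^{i+1}_\bm(S)]_{>0},
\]
regarded as a sheaf at $P$ with its natural $\bG_m$-grading. Verbatim, using that $D$ is \DB and the identification of $S/I$ as the section ring of $\sL|_D$, one gets $\myH^i\DuBois{Z}\simeq [H^{i+1}_\bm(S/I)]_{>0}$.

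Finally, I would assemble the pieces: combining the long exact sequence of cohomology sheaves of the bottom row in the diagram above with the identifications just derived and the long exact local cohomology sequence of $0\to I\to S\to S/I\to 0$ in positive degrees, a five-lemma argument yields
\[
\myH^i\DuBois{Y,Z} \simeq [H^{i+1}_\bm(I)]_{>0} \quad\text{for } i\geq 1.
\]
The closing criterion is then immediate: $(Y,Z)$ being \DB means $\sI_Z\to\DuBois{Y,Z}$ is a quasi-isomorphism, which by \autoref{lem.DBPairImpliesReduced} and the seminormality of the ample cone reduces to the vanishing of $\myH^i\DuBois{Y,Z}$ for $i\geq 1$; the case $[H^1_\bm(I)]_{>0}=0$ is automatic from \autoref{eq.FreeVanishingCohomology}, so including it in the statement is harmless. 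The main technical obstacle will be in the second step: verifying cleanly that $\myR^i\pi_*\O_{\tld Y}\to\myR^i\pi_*\O_{E}$ is genuinely the projection onto the $n=0$ graded summand, and that the \DB complex of $Y$ inherits a $\bG_m$-equivariant structure compatible with this decomposition, so that only the strictly positive graded parts $[\,\blank\,]_{>0}$ survive in the final formula.
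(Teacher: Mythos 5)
Your blow-up computation of $\myH^i\DuBois{Y}$ and $\myH^i\DuBois{Z}$ is a legitimate alternative to the paper's route (the paper simply cites Ma's formula $\myH^i\DuBois{Y}\simeq[H^{i+1}_{\bm}(S)]_{>0}$ and its analogue for $Z$), and the worry you flag there --- that $\myR^i\pi_*\O_{\tld Y}\to\myR^i\pi_*\O_E$ is the degree-zero projection and that everything is suitably $\bG_m$-graded --- is the routine part. The genuine gap is in the final assembly. What your setup actually yields, for each $i\geq 1$, are two short exact sequences: from the cohomology-sheaf long exact sequence of $\DuBois{Y,Z}\to\DuBois{Y}\to\DuBois{Z}$ you get $0\to\coker\big(\myH^{i-1}\DuBois{Y}\to\myH^{i-1}\DuBois{Z}\big)\to\myH^{i}\DuBois{Y,Z}\to\ker\big(\myH^{i}\DuBois{Y}\to\myH^{i}\DuBois{Z}\big)\to 0$, and from the local cohomology sequence of $0\to I\to S\to S/I\to 0$ in positive degrees the analogous sequence around $[H^{i+1}_{\bm}(I)]_{>0}$. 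Granting functoriality of your identifications, these two sequences have isomorphic outer terms and compatible outer maps, but the five lemma cannot be invoked: it requires a vertical map in the middle column making the ladder commute, and you have constructed no map at all between $\myH^{i}\DuBois{Y,Z}$ and $[H^{i+1}_{\bm}(I)]_{>0}$. (The natural morphism of triangles $(\sI_Z\to\O_Y\to\O_Z)\to(\DuBois{Y,Z}\to\DuBois{Y}\to\DuBois{Z})$ does not help at the level of cohomology sheaves, since the top triangle has none in positive degrees, and the graded local cohomology of $I$ does not appear there.) Two extensions with isomorphic sub and quotient need not be isomorphic, so your argument does prove the equivalence of vanishing --- hence the closing \DB criterion --- but not the asserted isomorphism $\myH^i\DuBois{Y,Z}\simeq[H^{i+1}_{\bm}(I)]_{>0}$, which is the actual content of the displayed formula.

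To close the gap you must produce an honest comparison map. One option is to prove a pair version of your descent triangle for the vertex blow-up, comparing $\DuBois{Y,Z}$ with $\myR\pi_*$ of the appropriate ideal sheaf upstairs; this is plausible but is not formal from the absolute case and would need its own argument. The other option is the paper's: apply $\myR\Gamma_{\bm}$ to the natural map $\sI_Z\to\DuBois{Y,Z}$, use that $\myH^j\DuBois{Y,Z}$ is a skyscraper at the vertex for $j>0$ so the spectral sequence $H^i_{\bm}(\myH^j\DuBois{Y,Z})\Rightarrow\bH^{i+j}_{\bm}(\DuBois{Y,Z})$ collapses to a long exact sequence, and invoke the Matlis dual of \autoref{thm.MainInjectivityForPairs} to obtain the surjections $H^i_{\bm}(I)\twoheadrightarrow\bH^i_{\bm}(\DuBois{Y,Z})$ that split it; a degreewise comparison (the $[\,\cdot\,]_{\leq 0}$ parts versus the $[\,\cdot\,]_{>0}$ parts) then produces the isomorphism. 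Note the trade-off: the paper's proof uses the injectivity theorem as an input, whereas your geometric route, if you supply the missing pair version of the blow-up triangle, could avoid that dependence; as written, however, the assembly step does not go through.
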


\begin{proof}
  First observe that both $Y$ and $Z$ are seminormal since they are saturated section
  rings over seminormal schemes.  L.~Ma showed in \cite[Equation (4.4.4) in the proof
  of Theorem 4.4]{MaFInjectiveBuchsbaum} that
\begin{equation}
\label{eq.MaIdentification}
\myH^i (\DuBois{Y}) \simeq [H^{i+1}_{\bm}(S)]_{> 0}
 \end{equation}
 for $i > 0$.  Likewise $\myH^i (\DuBois{Z}) = [H^{i+1}_{\bm}(S/I)]_{> 0}$ for $i > 0$.
 Now we analyze $\myH^{i+j} (\myR \Gamma_{\bm}(\DuBois{Y}))$ via a spectral sequence.
 Since $Y$ is \DB outside of the origin $V(\bm)$, we see that $\myH^j (\DuBois{Y})$ is
 supported only at the origin for $j > 0$.  It follows that the $E_2$-page of the
 spectral sequence
 \[
 H^i_{\bm}( \myH^{j} (\DuBois{Y})) \Rightarrow \myH^{i+j} (\myR \Gamma_{\bm}(\DuBois{Y}))
 \]
 looks like
 \[
 \xymatrix@R=12pt@C=12pt{
   \ldots  & \ldots  & \ldots  & \ldots  & \ldots & \ldots \\
   \myH^3 (\DuBois{Y}) & 0 &0 & 0 & 0 & \ldots \\
   \myH^2 (\DuBois{Y}) & 0 &0 & 0 & 0 & \ldots \\
   \myH^1 (\DuBois{Y}) \ar[drr] & 0 &0 & 0 & 0 & \ldots \\
   0 & H^1_{\bm}(S) & H^2_{\bm}(S)& H^3_{\bm}(S)& H^4_{\bm}(S) & \ldots }
\]
Here we are using the fact that $S$ is seminormal, and so $\myH^0 (\DuBois{Y}) = S$.
It is not difficult to see that the unique nonzero map of the $(i-1)$st page of this
spectral sequence induces the isomorphism of \autoref{eq.MaIdentification}, and so
those unique non-zero maps are injective.  Thus the spectral sequence contains the
data of a long exact sequence
\[
0 \to H^1_{\bm}(S) \twoheadrightarrow \bH^{1}_{\bm}(\DuBois{Y}) \to \myH^1 (\DuBois{Y})
\hookrightarrow H^2_{\bm}(S) \twoheadrightarrow \bH^{2}_{\bm}(\DuBois{Y}) \to \myH^2
(\DuBois{Y}) \hookrightarrow H^3_{\bm}(S) \twoheadrightarrow \dots
\]
Hence $\bH^{i}_{\bm}(\DuBois{Y}) = [H^i_{\bm}(S)]_{\leq 0}$ for $i \geq 2$ and
$\bH^{1}_{\bm}(\DuBois{Y}) \simeq H^1_{\bm}(S)$.  Likewise $\bH^{i}_{\bm}(\DuBois{Z})
= [H^i_{\bm}(S/I)]_{\leq 0}$ for $i \geq 2$ and $\bH^{1}_{\bm}(\DuBois{Z}) \simeq
H^1_{\bm}(S/I)$.  Furthermore, since $Y$ and $Z$ are seminormal we see that $\myH^0
(\DuBois{X,Z}) = I$ and so the same spectral sequence argument implies that we have a
long exact sequence
\[
0 \to H^1_{\bm}(I) \twoheadrightarrow \bH^{1}_{\bm}(\DuBois{Y,Z}) \to \myH^1
(\DuBois{Y,Z}) \to H^2_{\bm}(I) \twoheadrightarrow \bH^{2}_{\bm}(\DuBois{Y,Z}) \to
\myH^2 (\DuBois{Y,Z}) \to H^3_{\bm}(I) \twoheadrightarrow \dots
\]
We still have the labeled surjectivities by the Matlis dual of
\autoref{thm.MainInjectivityForPairs}, which we will prove later (we assume it for
now).  Thus it is enough to see that the maps above make the identification
$\bH^{i}_{\bm}(\DuBois{Y,Z}) = [H^i_{\bm}(I)]_{\leq 0}$ for $i \geq 2$.

We consider the diagram with distinguished triangles as rows
\[
\xymatrix{
  I \ar[d] \ar[r] & S \ar[d]\ar[r] &  S/I \ar[d]\ar[r]^-{+1} &\\
  \DuBois{Y,Z} \ar[r] & \DuBois{Y} \ar[r] & \DuBois{Z} \ar[r]^-{+1} &.  }
\]
We will apply the functor $\myR \Gamma_{\bm}(\blank)$ and take cohomology $i \geq 1$
to obtain:
\[
\xymatrix{ H^{i}_{\bm}(S) \ar[d]_{\alpha}\ar[r] & H^{i}_{\bm}(S/I) \ar[d]_{\beta}
  \ar[r] & H^{i+1}_{\bm}(I) \ar[d]_{\gamma} \ar[r] & H^{i+1}_{\bm}(S)
  \ar[d]_{\delta}\ar[r] &
  H^{i+1}_{\bm}(S/I) \ar[d]_{\epsilon}\\
  \bH^{i}_{\bm}(\DuBois{Y}) \ar[r] & \bH^{i}_{\bm}(\DuBois{Z}) \ar[r] &
  \bH^{i+1}_{\bm}(\DuBois{Y,Z}) \ar[r] & \bH^{i+1}_{\bm}(\DuBois{Y}) \ar[r] &
  \bH^{i+1}_{\bm}(\DuBois{Z})\\
  [H^{i}_{\bm}(S)]_{\leq 0} \ar@{=}[u] & [H^{i}_{\bm}(S/I)]_{\leq 0} \ar@{=}[u] & &
  [H^{i+1}_{\bm}(S)]_{\leq 0} \ar@{=}[u] & [H^{i+1}_{\bm}(S/I)]_{\leq 0} \ar@{=}[u].
}
\]
Note that $\gamma$ is the map we already identified as surjective above.  It is easy
to see that the vertical maps $\alpha, \beta, \delta, \epsilon$ are the projections
and so $[\alpha]_{\leq 0}, [\beta]_{\leq 0}, [\delta]_{\leq 0},$ and
$[\epsilon]_{\leq 0}$ are isomorphisms.  Thus $[\gamma]_{\leq 0}$ is also an
isomorphism.  But from the second row we see that $\bH^{i}_{\bm}(\DuBois{Y,Z})$ is of
non-positive degree so that $\bH^{i}_{\bm}(\DuBois{Y,Z}) = [H^i_{\bm}(I)]_{\leq 0}$
for $i \geq 2$ which completes the proof.
\end{proof}

\begin{remark}
  It would be natural to try to prove a common generalization of the (independently
  obtained) \autoref{prp:cone DB criterion} and \autoref{lem.DBPairsForGradedRings}.
\end{remark}

\begin{example}[A normal \DB pair whose ambient space is not \DB]
  \label{ex.NormalDBPairButNotDB}
  Let $W$ be an arbitrary smooth canonically polarized variety, that is, $W$ is
  smooth and projective and $\omega_W$ is ample. Further let $n > 1$, set $X=W\times \bP^n$ and $\sL=\pi_1^*\omega_W\otimes
  \pi_2^*\sO_{\bP^n}(1)$. Finally, let $K\subseteq \P^n$ be a smooth hypersurface of
  degree $n+1$, that is, $\sO_{\bP^n}(K)\simeq \omega_{\bP^n}^{-1}$, and let $Z=
  W\times K$. We claim that, using the above notation, $(CX,CZ)$ is a \DB pair, while
  $CX$ itself is not. Note also, that by construction $CX$ is normal.

  Consider $H^1(X, \sO_X(-Z) \tensor \sL^j)$ for $j \geq 0$ and observe that
  \[
  \sO_X(-Z) \tensor \sL^j = \pi_2^* \sO_{\bP^n}(-n-1) \tensor \pi_1^* \omega_W^j
  \tensor \pi_2^* \sO_{\bP^n}(j) = \pi_1^* \omega_W^j \tensor
  \pi_2^*\sO_{\bP^n}(j-n-1).
  \]
  Now $H^1(\bP^n, \sO_{\bP^n}(j-n-1)) = 0$ for all $j \geq 0$ and $H^0(\bP^n,
  \sO_{\bP^n}(j-n-1)) = 0$ for $j \leq n$.  But if $j > n \geq 1$, then $H^1(W,
  \omega_W^j) = 0$ by Kodaira vanishing and so it follows by the K\"unneth formula
  that $H^1(X, \sO_X(-Z) \tensor \sL^j) = 0$ for all $j\geq 0$, so the hypotheses of
  \autoref{lem.DBPairsForGradedRings} are satisfied.

  Let $r=\dim W$ and consider $H^r(X,\sL)$. By the K\"unneth formula
  $$
  H^r(X,\sL)\supseteq H^r(W,\omega_W)\otimes H^0(\bP^n, \sO_{\bP^n}(1))\neq 0,
  $$
  and hence by \autoref{prp:cone DB criterion} $CX$ is \emph{not} \DB.

  On the other hand  we have that
  \begin{equation}
    \label{eq:4}
    \sL(-Z) \simeq \pi_1^*\omega_W \otimes \pi_2^*\sO_{\bP^n}(1-n-1) \simeq \omega_X
    \otimes \pi_2^* \sO_{\bP^n}(1).
  \end{equation}
  Now observe that $H^q(\bP^n, \sO_{\bP^n}(1-n-1))=0$ for \emph{all}
  $q \geq 0$, so, again by the K\"unneth formula it follows that $H^i(X,\sL(-Z))=0$
  for all $i>0$.

  In order to conclude that $(CX,CZ)$ is a \DB pair we need that $H^i(X,\sL^m(-Z))=0$
  for all $i,m>0$. We just showed that $H^i(X, \sL(-Z)) = 0$ for all $i > 0$ which handles the $m = 1$ case.  If $m>1$ then
  $\sM:=\sL^{m-1}\otimes\pi_2^*\sO_{\bP^n}(1)$ is ample on $X$ and by
  (\ref{eq:4}) and Kodaira vanishing we have that
  $$
  H^i(X,\sL^m(-Z)) = H^i(X, \sL(-Z) \otimes \sL^{m-1}) \simeq 
  H^i(X,\omega_X \otimes \sM)=0
  $$
  and hence it follows from \autoref{prp:cone DB criterion} that $(CX,CZ)$ is indeed
  a \DB pair.
\end{example}

We will find the following lemma useful, \cf
\cite{EsnaultHodgeTypeOfSubvarieties,SchwedeEasyCharacterization}.

\begin{lemma}
  \label{lem.EmbeddedDBCriterionForPairs}
  Assume that $(X, Z)$ is a pair with $Z \subseteq X$ reduced schemes.  Assume
  further that $X \subseteq Y$ where $Y$ is smooth.  Let $\pi : \tld Y \to Y$ be a
  log resolution of both $X$ and $Z$ in $Y$ and set $\overline{X}$ and $\overline{Z}$
  to be the reduced preimages of $X$ and $Z$ in $\tld Y$ respectively.  Then
  $\DuBois{X, Z} \simeq \myR \pi_* \sI_{\overline{Z} \subseteq \overline{X}}$ where
  $\sI_{\overline{Z} \subseteq \overline{X}}$ is the ideal of $\overline{Z}$ in
  $\overline{X}$.
\end{lemma}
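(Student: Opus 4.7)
The plan is to apply the Esnault--Schwede embedded characterization of the \DB complex twice (once for $X$ and once for $Z$) and then compare the two resulting distinguished triangles. Because $\pi\colon \tld Y \to Y$ is chosen to be a log resolution of $X$ \emph{and} $Z$ simultaneously inside the smooth ambient $Y$, both $\overline{X} = \pi^{-1}(X)_{\red}$ and $\overline{Z} = \pi^{-1}(Z)_{\red}$ are simple normal crossing in $\tld Y$, and so the results of \cite{EsnaultHodgeTypeOfSubvarieties, SchwedeEasyCharacterization} yield quasi-isomorphisms
\[
\DuBois{X} \qis \myR\pi_*\O_{\overline{X}} \qquad\text{and}\qquad \DuBois{Z} \qis \myR\pi_*\O_{\overline{Z}}.
\]

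Next, I would apply $\myR\pi_*$ to the tautological short exact sequence
\[
0 \to \sI_{\overline{Z}\subseteq\overline{X}} \to \O_{\overline{X}} \to \O_{\overline{Z}} \to 0
\]
to obtain a distinguished triangle in $D^b_{\coherent}(X)$ whose two outer vertices agree, under the identifications above, with the two outer vertices of the defining triangle
\[
\DuBois{X,Z} \to \DuBois{X} \to \DuBois{Z} \xrightarrow{+1}
\]
of the pair. The standard fact that a morphism of distinguished triangles whose two outer components are isomorphisms forces the third component to be an isomorphism as well then yields the desired quasi-isomorphism $\DuBois{X,Z} \qis \myR\pi_*\sI_{\overline{Z}\subseteq \overline{X}}$.

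The main (and really the only) non-routine point is verifying the naturality required to invoke this five-lemma-style argument: one must check that the two quasi-isomorphisms above fit into a commutative square with the canonical map $\DuBois{X} \to \DuBois{Z}$ on one side and the pushforward $\myR\pi_*\O_{\overline{X}} \to \myR\pi_*\O_{\overline{Z}}$ of the restriction on the other. This is where choosing a single $\pi$ that resolves $X$ and $Z$ at once pays off: the cubic (or embedded) hyperresolutions underlying the constructions of $\DuBois{X}$ and $\DuBois{Z}$ can then be built compatibly from the components of $\overline{X}$ and $\overline{Z}$ and their intersections, and the map $\DuBois{X} \to \DuBois{Z}$ coming from $Z\hookrightarrow X$ is visibly identified with the surjection $\O_{\overline{X}} \twoheadrightarrow \O_{\overline{Z}}$ after applying $\myR\pi_*$. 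Once this compatibility is recorded the proof closes at once.
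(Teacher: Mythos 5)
Your proposal is correct and is essentially the paper's own argument: the paper likewise invokes the Esnault--Schwede identifications $\DuBois{X}\qis\myR\pi_*\O_{\overline X}$ and $\DuBois{Z}\qis\myR\pi_*\O_{\overline Z}$ (via $\overline X,\overline Z$ being snc, hence \DB) and compares the pushed-forward ideal-sheaf triangle with the defining triangle of $\DuBois{X,Z}$. The only cosmetic difference is that the paper obtains the comparison map directly from functoriality of the pair complex, namely $\DuBois{X,Z}\to\myR\pi_*\DuBois{\overline X,\overline Z}\qis\myR\pi_*\sI_{\overline Z\subseteq\overline X}$, which supplies the naturality you verify by hand before applying the two-out-of-three argument.
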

\begin{proof}
  Consider the diagram
  \[
  \xymatrix{%
    \DuBois{X,Z} \ar[d]_\alpha \ar[r] & \DuBois{X} \ar[d]_\beta \ar[r] & \DuBois{Z}
    \ar[d]_\gamma    \ar[r]^-{+1} & \\
    \myR \pi_* \DuBois{\overline{X}, \overline{Z}} \ar[r] & \myR \pi_*
    \DuBois{\overline{X}} \ar[r] & \myR \pi_* \DuBois{\overline{Z}} \ar[r]^-{+1} & \\
    \myR \pi_* \sI_{\overline{Z} \subseteq \overline{X}} \ar[r] \ar@{=}[u] & \myR
    \pi_* \sO_{\overline{X}} \ar[r] \ar@{=}[u] & \myR \pi_* \sO_{\overline{Z}}
    \ar[r]^-{+1} \ar@{=}[u] & }
  \]
  The vertical arrows $\beta$ and $\gamma$ are quasi-isomorphisms by
  \cite[Theorem 6.4]{KovacsSchwedeDuBoisSurvey} (also see \cite[Theorem 4.3]{SchwedeEasyCharacterization}) since $\overline{X}$ and $\overline{Z}$ are
  SNC and hence \DB.  The second row of equalities also follows since $\overline{X}$ and $\overline{Z}$ are \DB.  Then $\alpha$ is a quasi-isomorphism as well and hence the lemma
  follows.
\end{proof}

There are some situations when a pair being \DB implies that the ambient space is
also \DB. It is proved in \cite{GK13} that this happens if $X$ is Gorenstein, but
that $X$ being $\bQ$-Gorenstein is not sufficient. Another simple situation in which
this holds is the following:

\begin{lemma}
  \label{lem.DBPairCartierImpliesAmbientDB}
  Let $X$ be a reduced $\bC$-scheme essentially of finite type and $H$ a Cartier
  divisor.  If $(X, H)$ is a \DB pair then $X$ (and hence $H$) is also \DB.
\end{lemma}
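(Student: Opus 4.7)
The plan is to compare, via a morphism of distinguished triangles, the short exact sequence $0 \to \sO_X(-H) \to \sO_X \to \sO_H \to 0$ with the defining triangle $\DuBois{X,H} \to \DuBois{X} \to \DuBois{H} \to$, exploiting the invertibility of $\sI_H$.

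First, \autoref{lem.DBPairImpliesReduced} guarantees $H$ is reduced, so $\sI_H = \sO_X(-H)$ is a line bundle; the \DB pair hypothesis then supplies a quasi-isomorphism $\sO_X(-H) \qis \DuBois{X,H}$. I would set up the commutative diagram of exact triangles
\[
\xymatrix{
\sO_X(-H) \ar[d]^{\qis} \ar[r] & \sO_X \ar[d]^{\alpha} \ar[r] & \sO_H \ar[d]^{\beta} \ar[r]^-{+1} & \\
\DuBois{X,H} \ar[r] & \DuBois{X} \ar[r] & \DuBois{H} \ar[r]^-{+1} &
}
\]
whose verticals are the canonical comparison maps. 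Since the leftmost arrow is a quasi-isomorphism, the cones of $\alpha$ and $\beta$ are quasi-isomorphic; thus $X$ is \DB if and only if $H$ is, and it suffices to prove one of these. Note also that on $X\setminus H$ the restricted pair is $(X\setminus H, \emptyset)$, so the pair condition already gives that $X$ is \DB off $H$.

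Next I would read off the long exact cohomology sequence of the bottom triangle. Using that $\DuBois{X,H}$ is concentrated in degree zero, this produces the short exact sequence $0 \to \sI_H \to \sO_{X^\sn} \to \sO_{H^\sn} \to 0$ (via $\myH^0\DuBois{X} \simeq \sO_{X^\sn}$ and similarly for $H$), together with isomorphisms $\myH^i \DuBois{X} \simeq \myH^i \DuBois{H}$ for $i \geq 1$. If $f$ is a local generator of $\sI_H$, then $f$ annihilates the $\sO_H$-module $\sO_{H^\sn}$, so the short exact sequence forces $f\sO_{X^\sn} \subseteq f\sO_X$; the reverse inclusion is automatic, hence $f\sO_{X^\sn} = f\sO_X$. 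Since $f$ is a nonzerodivisor on $\sO_{X^\sn}$ (it is a nonzerodivisor on $\sO_X$ and the extension is integral, so $f$ is in no minimal prime of $\sO_{X^\sn}$), this forces $\sO_{X^\sn} = \sO_X$. Thus $X$, and by the equivalence also $H$, is seminormal.

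The main obstacle is then upgrading seminormality to the full \DB condition, i.e.\ the vanishing $\myH^i \DuBois{X} = 0$ for $i \geq 1$. These sheaves are supported on $H$ and annihilated by $f$ via the restriction isomorphism, but neither property by itself forces vanishing. The plan to close the argument is to transport the $\myH^0$ trick to higher degrees by dualizing everything (so $\DBDual{X,H} \qis \omega_X^\mydot(H)$ by the now-automatic form of \autoref{thm.MainInjectivityForPairs}) and playing the injectivity of $\myH^j \DBDual{X} \hookrightarrow \myH^j \omega_X^\mydot$ and $\myH^j \DBDual{H} \hookrightarrow \myH^j \omega_H^\mydot$ against the identity of cones in the dual morphism of triangles; alternatively, once $H$ is shown to be \DB, one may invoke \autoref{thm.MainTheoremDeformation} with $Z=\emptyset$ to conclude that $X$ is \DB in a neighborhood of $H$, and combined with the \DB-ness on $X\setminus H$ this completes the proof.
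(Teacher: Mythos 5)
Your reduction and your seminormality step are correct: comparing $0 \to \O_X(-H) \to \O_X \to \O_H \to 0$ with the defining triangle of $\DuBois{X,H}$ does show that the cones of $\O_X \to \DuBois{X}$ and $\O_H \to \DuBois{H}$ agree, so $X$ is \DB if and only if $H$ is, and your multiplication-by-$f$ argument on $\myH^0$ does give $\O_{X^{\sn}} = \O_X$. But this is where the proof stops: seminormality is far weaker than \DB (normal cone singularities with nonvanishing intermediate cohomology are seminormal but not \DB), so the vanishing of $\myH^i\DuBois{X}$ for $i \geq 1$ is the entire content, and you only offer a ``plan'' for it. The first branch of the plan does not close as stated. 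Dualizing gives a morphism of triangles from $\DBDual{H} \to \DBDual{X} \to \DBDual{X,H} \qis \omega_X^{\mydot}\tensor\O_X(H)$ to $\omega_H^{\mydot} \to \omega_X^{\mydot} \to \omega_X^{\mydot}\tensor\O_X(H)$, with the outer verticals injective on cohomology (\autoref{thm.MainInjectivityForPairs}) and the right vertical an isomorphism; the resulting diagram chase only shows that the cokernel of $\myH^j\DBDual{X} \to \myH^j\omega_X^{\mydot}$ is a quotient of an $\O_H$-module, i.e.\ is \emph{killed} by $f$. A Nakayama argument in the style of \autoref{thm.MainTheoremDeformation} needs the opposite: that the cokernel equals $f$ times itself, and in that theorem this is exactly what the \DB hypothesis on the divisor (there: on $(H, Z\cap H)$) provides. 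Here you do not know $H$ is \DB, so the mechanism is unavailable. The second branch is circular: invoking \autoref{thm.MainTheoremDeformation} with $Z=\emptyset$ requires $H$ to be \DB, which by your own reduction is equivalent to the statement you are proving.

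The missing idea, and the route the paper actually takes, is quite different: rewrite the hypothesis as a quasi-isomorphism $\O_X \to \DuBois{X,H}\tensor\O_X(H)$ and show that this map \emph{factors through} $\DuBois{X}$; then Kov\'acs' splitting criterion \cite[2.3]{KovacsDuBoisLC1} immediately gives that $X$ is \DB (and then $H$ is \DB by your cone comparison). The factorization is produced geometrically: embedding $X$ in a smooth $Y$ and taking a log resolution $\pi\: \tld Y \to Y$ of $(Y,X)$ and $(Y,H)$, \autoref{lem.EmbeddedDBCriterionForPairs} identifies $\DuBois{X,H} \qis \myR\pi_*\sI_{\overline H \subseteq \overline X} \qis \myR\pi_*\O_{X'}(-\overline H|_{X'})$, where $X'$ consists of the components of $\overline X$ not contained in $\overline H$, and then the effectivity of $\pi^*H - \overline H$ yields a map $\DuBois{X} \qis \myR\pi_*\O_{\overline X} \to \myR\pi_*\O_{X'}\big((\pi^*H-\overline H)|_{X'}\big) \qis \DuBois{X,H}\tensor\O_X(H)$ factoring the quasi-isomorphism above. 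Some geometric input of this kind (or the full strength of the deformation machinery applied to an $H$ already known to be \DB) is needed; the injectivity theorem alone, played against the triangle comparison, does not force the higher cohomology to vanish.
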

\begin{proof}
  The statement is local and so we may assume that $X = \Spec R$ is affine.  We know
  that $\O_X(-H) \to \DuBois{X, H}$ is a quasi-isomorphism and thus so is $\O_X \to
  \DuBois{X, H} \tensor \O_X(H)$.  We will show that this map factors through
  $\O_X \to \DuBois{X}$ which will complete the proof by \cite[Theorem 2.3]{KovacsDuBoisLC1}.

  Embed $X \subseteq Y$ as a smooth scheme and let $\pi : \tld Y \to Y$ be a
  simultaneous log resolution of $(Y,X)$ and $(Y, H)$ with $\overline{X},
  \overline{H}$ the reduced total transforms of $X$ and $H$ respectively.  Then
  $\DuBois{X, H} = \myR \pi_* \sI_{\overline{H} \subseteq \overline{X}}$ by
  \autoref{lem.EmbeddedDBCriterionForPairs}.  Fix $X'$ to be the components of
  $\overline{X}$ which are not also components of $\overline{H}$ and we see that
  $\sI_{\overline{H} \subseteq \overline{X}} \simeq \O_{X'}(-\overline{H}|_{X'})$.
  Thus
  \[
  \DuBois{X, H} \tensor \O_X(H) \qis \myR \pi_* \O_{X'}((\pi^* H -
  \overline{H})|_{X'}).
  \]
  Since $\pi^* H - \overline{H}$ is effective, we obtain a map
  \[
  \DuBois{X} \qis \myR \pi_* \O_{\overline{X}} \to \myR \pi_* \O_{X'} \to \myR \pi_*
  \O_{X'}((\pi^* H - \overline{H})|_{X'}) \qis \DuBois{X, H} \tensor \O_X(H).
  \]
  This map obviously factors the quasi-isomorphism $\O_X \to \DuBois{X, H} \tensor
  \O_X(H)$ and hence the proof is complete.
\end{proof}

We recall properties of $\DuBois{X,Z}$ that we will need later.

\begin{lemma}
  \label{lem.BasicPropertiesOfDuBois}
  Let $X$ be a scheme over $\bC$ and $Z \subseteq X$ is a closed subscheme and $j : U
  = X \setminus Z \hookrightarrow X$ the complement of $Z$.  Then:
  \begin{enumerate}
  \item\label{item:1} If in addition $X$ is proper, then $H^i(X, \sI_{Z}) \to
    \bH^i(X, \DuBois{X,Z})$ is surjective for all $i \in \bZ$, \cite[Corollary
    4.2]{KovacsDBPairsAndVanishing}, \cite[Theorem
    6.22]{KollarKovacsSingularitiesBook}.
  \item\label{item:2} If $H$ is a general member of a base point free linear system
    then $\DuBois{X,Z} \tensor \O_H \qis \DuBois{H, H \cap Z}$, \cite[Proposition
    3.18]{KovacsDBPairsAndVanishing}, \cite[Theorem
    6.5(6)]{KollarKovacsSingularitiesBook}.
  \item\label{item:3} If $X = U \cup V$ is a decomposition into closed subschemes and
    $Z \subseteq X$ is another closed subscheme, then we have a distinguished
    triangle:
    \[
    \DuBois{U \cup V, Z} \to \DuBois{U, Z\cap U} \oplus \DuBois{V, Z\cap V} \to
    \DuBois{U \cap V, Z\cap {U \cap V}} \xrightarrow{+1}
    \]
    cf.\ \cite[Theorem 6.5(11)]{KollarKovacsSingularitiesBook}.
  \item\label{item:4} Let $X = U \cup V$ be a decomposition of $X$ into closed
    subschemes.
    Then $$\DuBois{U\cup V, V}\simeq \DuBois{U,U\cap V}.$$ cf.\
    \cite[Proposition 3.19]{KovacsDBPairsAndVanishing}, \cite[Theorem
    6.17]{KollarKovacsSingularitiesBook}.
  \end{enumerate}
\end{lemma}
\begin{proof}
  Parts (a) and (b) follow from the references in their statement.  For (c), the
  included reference only states the triangle in the case that $Z = \emptyset$.
  However, this more general version follows easily from the following diagram:
  \[
  \xymatrix{ \DuBois{U \cup V, Z} \ar[d] \ar@{.>}[r] & \DuBois{U, Z\cap U} \oplus
    \DuBois{V, Z\cap V} \ar[d] \ar@{.>}[r] & \DuBois{U \cap V, Z\cap {U \cap V}} \ar[d]
    \ar@{.>}[r]^-{+1} & \\
    \DuBois{U \cup V} \ar[d] \ar@{->}[r] & \DuBois{U} \oplus \DuBois{V} \ar[d]
    \ar@{->}[r] & \DuBois{U \cap V} \ar[d] \ar@{->}[r]^-{+1} &
    \\
    \DuBois{Z} \ar[d]_{+1} \ar@{->}[r] & \DuBois{Z\cap U} \oplus \DuBois{Z\cap V}
    \ar[d]_{+1}\ar@{->}[r] & \DuBois{Z\cap {U \cap V}} \ar[d]_{+1}
    \ar@{->}[r]^-{+1} & \\
    & & & }
  \]
  and the 9-lemma in triangulated categories \cite[B.1]{Kovacs11b}.

  For (d) consider the distinguished triangle $\xymatrix{\DuBois{U \cup V} \ar[r] &
    \DuBois{U} \oplus \DuBois{V} \ar[r] & \DuBois{U \cap V} \ar[r]^-{+1} & }$ of part
  (c) with $Z=\emptyset$ and \cite[Lemma 2.1]{KollarKovacsLCImpliesDB} implies that the
  left vertical arrow of the following diagram is an isomorphism:
    \[
    \xymatrix{ \DuBois{U \cup V, V} \ar[r]\ar[d]^{\simeq} & \DuBois{U \cup
        V}
      \ar[d]\ar[r] & \DuBois{V} \ar[r]^-{+1}\ar[d] &\\
      \DuBois{U, U \cap V} \ar[r] & \DuBois{U} \ar[r]& \DuBois{U
        \cap V} \ar[r]^-{+1} & \\
    }
    \]
    For more details see the proof in the references and replace
    $\underline\Omega^\times$ with $\DuBois{}$.
\end{proof}



We have one more lemma which constructs a natural exact triangle for Du~Bois pairs.

\begin{lemma}
\label{lem.TwoOfThreeDuBoisForPairs}
Let $X$ be a scheme and $W, Z \subseteq X$ subschemes.  Then there is a
distinguished triangle
\[
\DuBois{X, W \cup Z} \to \DuBois{X, Z} \to \DuBois{W, Z \cap W} \xrightarrow{+1}.
\]
In particular, because there is also a short exact sequence:
\[
0 \to \sI_{W \cup Z \subseteq X} \to \sI_{Z \subseteq X} \to \sI_{Z \cap W \subseteq
  W} \to 0,
\]
if any two of $\big\{ (X, W \cup Z), (X, Z), (W, Z \cap W) \big\}$ are Du Bois, so is
the third.
\end{lemma}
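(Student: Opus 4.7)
The plan is to build the triangle via the octahedral axiom applied to the factorization
\[
\DuBois{X}\longrightarrow \DuBois{W\cup Z}\longrightarrow \DuBois{Z}
\]
of the natural restriction map. By the defining triangle \eqref{eq:1}, the fibers (up to a shift) of these three composable arrows $\DuBois{X}\to\DuBois{W\cup Z}$, $\DuBois{X}\to\DuBois{Z}$ and $\DuBois{W\cup Z}\to\DuBois{Z}$ are exactly $\DuBois{X, W\cup Z}$, $\DuBois{X, Z}$ and $\DuBois{W\cup Z, Z}$ respectively. The octahedral axiom then packages these fibers into a distinguished triangle
\[
\DuBois{X, W\cup Z}\longrightarrow \DuBois{X, Z}\longrightarrow \DuBois{W\cup Z, Z}\xrightarrow{+1}.
\]

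To match the statement exactly, I would then invoke \autoref{lem.BasicPropertiesOfDuBois}(d), applied with the decomposition $W\cup Z = W\cup Z$ (taking $U=W$, $V=Z$), which yields the canonical quasi-isomorphism $\DuBois{W\cup Z, Z}\simeq \DuBois{W, W\cap Z}$. Substituting this into the triangle above produces the desired
\[
\DuBois{X, W\cup Z}\longrightarrow \DuBois{X, Z}\longrightarrow \DuBois{W, W\cap Z}\xrightarrow{+1}.
\]

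For the ``in particular'' clause, observe that the short exact sequence of ideal sheaves yields a distinguished triangle $\sI_{W\cup Z\subseteq X}\to \sI_{Z\subseteq X}\to \sI_{Z\cap W\subseteq W}\xrightarrow{+1}$ in $D^b_{\coherent}(X)$. The natural maps $\sI_\bullet \to \DuBois{\bullet,\bullet}$ (recorded after \eqref{eq:1}) are compatible with the restriction maps appearing in both triangles, and so assemble into a morphism of distinguished triangles. If any two of the three vertical arrows are quasi-isomorphisms, then the third is too by the two-out-of-three property (equivalently, the long exact sequence of cohomology combined with the five lemma), giving the claimed stability.

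The only real bookkeeping step is verifying that the natural maps used in the octahedral invocation and in the morphism of triangles agree with the ones implicit in the definitions; since every map in sight is induced by the sheafy restriction $\sO_X\to\sO_Z$ (and its analogues), this compatibility is essentially automatic, so I expect no genuine obstacle.
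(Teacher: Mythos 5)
Your proof is correct, but it takes a genuinely different route from the paper's. You apply the octahedral axiom to the composition $\DuBois{X}\to\DuBois{W\cup Z}\to\DuBois{Z}$, so the third vertex of the resulting triangle is $\DuBois{W\cup Z,Z}$, and you then convert it into $\DuBois{W,W\cap Z}$ by citing \autoref{lem.BasicPropertiesOfDuBois}(d) with ambient scheme $W\cup Z$ decomposed as $U=W$, $V=Z$; this is legitimate and not circular, since (d) is established before the lemma and independently of it. The paper instead starts from the Mayer--Vietoris $3\times 3$ diagram whose top row is $\DuBois{X,W\cup Z}\to\DuBois{X,Z}\oplus\DuBois{X,W}\to\DuBois{X,Z\cap W}$, uses the octahedral axiom to produce a complex $K^{\mydot}$ that is simultaneously the third vertex of triangles on $\DuBois{X,W\cup Z}\to\DuBois{X,Z}$ and on $\DuBois{X,W}\to\DuBois{X,Z\cap W}$, and then identifies $K^{\mydot}\qis\DuBois{W,Z\cap W}$ by a further diagram---in effect re-deriving the content of part (d) rather than quoting it. Your version is shorter and reuses (d); the paper's version additionally records that the same cone computes the excision-type triangle $\DuBois{X,W}\to\DuBois{X,Z\cap W}\to K^{\mydot}$, information not needed for the statement. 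Two small points you should make explicit: the composite $\DuBois{X}\to\DuBois{W\cup Z}\to\DuBois{Z}$ agrees with the natural map $\DuBois{X}\to\DuBois{Z}$ by functoriality of the Du~Bois complex (this is what licenses the octahedron), and for the ``in particular'' clause the vertical comparison maps from the ideal-sheaf triangle must be the natural ones of \autoref{eq:1}; you acknowledge this, and the paper likewise leaves it implicit, so your level of detail matches the original.
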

\begin{proof}
  We begin with a diagram of distinguished triangles as columns and rows (cf.\
  \cite[Theorem 6.5.11]{KollarKovacsSingularitiesBook} and \cite[Thm.~B1]{Kovacs11b}):
\[
\xymatrix{
  \DuBois{X, W \cup Z} \ar@{.>}[r] \ar[d]  & \DuBois{X, Z} \oplus \DuBois{X, W}
  \ar[d] \ar@{.>}[r]^-{-} & \DuBois{X, Z \cap W} \ar[d] \ar[r]^{+1} &\\
  \DuBois{X} \ar[d] \ar[r]^-{\id \oplus \id} &\DuBois{X} \oplus \DuBois{X} \ar[d]
  \ar[r]^-{-} & \DuBois{X} \ar[d] \ar[r]^{+1} &\\
  \DuBois{W \cup Z} \ar[r] & \DuBois{Z} \oplus \DuBois{W} \ar[r]_-{-} & \DuBois{Z
    \cap W} \ar[r]^{+1} &\\
}
\]
Note the second column of horizontal maps in the diagram above are each obtained by subtracting the canonical maps on each factor of the direct sum, hence the $-$ signs.
The octahedral axiom implies that there exists a diagram of distinguished
triangles,
\[
\xymatrix{
\DuBois{X, W \cup Z} \ar[d] \ar[r] & \DuBois{X, Z} \ar[d] \ar[r] & K^{\mydot}
\ar[d]^{\sim} \ar[r]^{+1} & \\
\DuBois{X, W} \ar[r] & \DuBois{X, Z \cap W} \ar[r] & K^{\mydot} \ar[r]_{+1} &
}
\]
We need to identify $K^{\mydot}$.  Notice that the bottom row also fits into another
diagram of distinguished triangles (cf.\ \cite[Thm.~B1]{Kovacs11b}):
\[
\xymatrix{
\DuBois{X, W} \ar[r] \ar[d] & \DuBois{X, Z \cap W} \ar[r] \ar[d] & K^{\mydot} \ar[r]^-{+1} \ar[d] & \\
\DuBois{X} \ar[r]^{\sim} \ar[d]  & \DuBois{X} \ar[r] \ar[d] & 0 \ar[r]^-{+1} \ar[d] & \\
\DuBois{W} \ar[r]  \ar[d]_{+1} & \DuBois{Z \cap W} \ar[r] \ar[d]\ar[d]_{+1} & \DuBois{W, Z \cap W}[1] \ar[d]_{+1} \ar[r]_-{+1} & \\
& & &
}
\]
Hence $K^{\mydot} \simeq \DuBois{W, Z \cap W}$ and the lemma follows.
\end{proof}

Finally, note that being \DB is a direct generalization of being rational for pairs.

\begin{theorem}\textnormal{(\cite[Corollary 5.6]{KovacsDBPairsAndVanishing},
    cf.\  \cite[Corollary 6.25]{KollarKovacsSingularitiesBook})}
  If $(X, D)$ is a rational pair then $(X, D)$ is also a \DB pair.
\end{theorem}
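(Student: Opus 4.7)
The plan is to pass through a thrifty resolution in order to factor the canonical arrow $\sI_D \to \DuBois{X, D}$ and reduce the theorem to a cleaner comparison statement. First take a thrifty resolution $\pi\colon (Y, D_Y) \to (X, D)$, which exists in characteristic zero. Since $Y$ is smooth and $D_Y$ is snc, a standard computation for smooth pairs gives $\DuBois{Y, D_Y} \qis \O_Y(-D_Y) = \sI_{D_Y}$. Functoriality of the Deligne--Du~Bois complex under proper morphisms then produces a pullback morphism
\[
\DuBois{X, D} \longrightarrow \myR\pi_*\DuBois{Y, D_Y} \qis \myR\pi_*\O_Y(-D_Y),
\]
and the composite $\sI_D \to \DuBois{X, D} \to \myR\pi_*\O_Y(-D_Y)$ agrees with the natural adjunction morphism $\O_X(-D) \to \myR\pi_*\O_Y(-D_Y)$, which is a quasi-isomorphism by conditions (i) and (ii) of \autoref{def.RationalPairs}.

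Next, invoke the standard derived-category fact that in a composition $f$ and $g$ with $g \circ f$ a quasi-isomorphism, $f$ is a quasi-isomorphism precisely when $g$ is. This reduces the theorem to showing that the pullback $\DuBois{X, D} \to \myR\pi_*\O_Y(-D_Y)$ is itself a quasi-isomorphism, since the composite is handled by the rational pair hypothesis.

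This last comparison is the main obstacle, and the place where the thrifty hypothesis does real work. I would pass to Grothendieck duals: by \autoref{thm.GRVanishingForThrifty} the dual of $\myR\pi_*\O_Y(-D_Y)$ collapses to $\pi_*\omega_Y(D_Y)[\dim X]$, so equivalently one wants $\DBDual{X, D} \qis \pi_*\omega_Y(D_Y)[\dim X]$. The crucial input is that $\pi$ is an isomorphism over the generic point of every stratum of the snc locus of $(X,D)$, which means the comparison morphism has cone supported on a small enough locus that one can control it. Combining this with the embedded description of $\DuBois{X, D}$ supplied by \autoref{lem.EmbeddedDBCriterionForPairs} (obtained by embedding $X$ into a smooth ambient variety and choosing a log resolution there that factors through $\pi$) and arguing via the octahedral axiom as in the proof of \autoref{lem.DBPairCartierImpliesAmbientDB}, one checks that the cone of the comparison map has vanishing cohomology sheaves. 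Feeding this back into the previous paragraph gives $\sI_D \qis \DuBois{X, D}$, so $(X,D)$ is a \DB pair.
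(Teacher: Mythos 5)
The first two paragraphs of your proposal are correct and are the standard opening move: since $(Y, D_Y)$ is snc, $\DuBois{Y,D_Y} \qis \O_Y(-D_Y)$, the composite $\sI_D \to \DuBois{X,D} \to \myR\pi_*\O_Y(-D_Y)$ is the natural map $\O_X(-D) \to \myR\pi_*\O_Y(-D_Y)$, and conditions (i) and (ii) of \autoref{def.RationalPairs} make that composite an isomorphism, so $\sI_D \to \DuBois{X,D}$ is a split monomorphism in the derived category. The genuine gap is your third paragraph, which is where all the content of the theorem sits. You claim that the comparison $\DuBois{X,D} \to \myR\pi_*\O_Y(-D_Y)$ (dually, $\DBDual{X,D} \qis \pi_*\omega_Y(D_Y)[\dim X]$) follows from thriftiness, the embedded description of \autoref{lem.EmbeddedDBCriterionForPairs}, and support considerations for the cone, with no further use of the rationality hypothesis. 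No such argument can exist, because that statement is false for thrifty resolutions of general (even \DB) pairs: take $D = 0$ and $X$ the cone over a smooth plane cubic $E$. Every resolution is thrifty in this case, $X$ is \DB so $\DuBois{X} \qis \O_X$ and $\DBDual{X} \qis \omega_X[2]$, yet for the blow-up of the vertex $R^1\pi_*\O_Y \simeq H^1(E,\O_E) \neq 0$ and $\pi_*\omega_Y \subsetneq \omega_X$, so the comparison map is not a quasi-isomorphism. Once the composite is known to be an isomorphism, ``the second map is a quasi-isomorphism'' is \emph{equivalent} to the theorem you are proving, so asserting it on the basis of support and octahedral bookkeeping is circular exactly at the crucial point.

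What is missing is the substantive input used in the actual proof (the paper does not reprove the statement; it quotes \cite[Corollary 5.6]{KovacsDBPairsAndVanishing}, cf.\ \cite[6.25]{KollarKovacsSingularitiesBook}): a splitting criterion for \DB pairs. Your factorization yields precisely that $\sI_D \to \DuBois{X,D}$ admits a left inverse in $D^b_{\coherent}(X)$, and the theorem then follows from the pair analogue of Kov\'acs's criterion that such a splitting already forces $(X,D)$ to be a \DB pair --- a nontrivial theorem proved with Hodge-theoretic surjectivity input of the kind appearing in \autoref{lem.BasicPropertiesOfDuBois}(a) together with duality, not by showing the cone of the comparison map vanishes for formal reasons. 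This is the same mechanism by which \autoref{lem.DBPairCartierImpliesAmbientDB} concludes via \cite[2.3]{KovacsDuBoisLC1} in the non-pair case. To repair your argument, either invoke that splitting criterion after your second paragraph, or supply a proof of the comparison isomorphism that genuinely uses conditions (i) and (ii); as written, the final step is an assertion rather than a proof.
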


\section{An injectivity theorem} 

A key ingredient of the proof that Du Bois singularities are deformation invariant
was an injectivity theorem \cite[Theorem~3.3]{KovacsSchwedeDBDeforms}.  In this
section, we generalize that result to the context of pairs.

  \begin{lemma} \textnormal{(\cf \cite[Lemma 3.1]{KovacsSchwedeDBDeforms})}
    \label{lem.CompatibilityOfCyclicCoverAndDuBoisPairs}
    Let $X$ be a reduced scheme, $Z \subseteq X$ a reduced subscheme and $\sL$ a semi-ample line
    bundle.  Let $s \in \sL^n$ be a general global section for some $n\gg 0$ and take
    the $n^{\text{th}}$-root of this section as in \cite[Definition 2.50]{KollarMori}:
    $$
    \eta : Y = \sheafspec \bigoplus_{i = 0}^{n-1} \sL^{-i} \to X.
    $$
    Set $W = \eta^{-1}(Z)$ (with the induced scheme structure).  Note that the
    restriction satisfies \mbox{$\eta|_W : W = \sheafspec \bigoplus_{i = 0}^{n-1}
      \sL^{-i}|_Z \to Z$}.  Then as before, writing $\eta_*=\myR\eta_*$,
    \[
    \eta_* \DuBois{Y,W} \simeq \DuBois{X,Z}\otimes \eta_*\sO_Y \simeq \bigoplus_{i =
      0}^{n-1} (\DuBois{X,Z} \tensor \sL^{-i}),
    \]
    and this direct sum decomposition is compatible with the decomposition $\eta_*
    \O_Y = \bigoplus_{i = 0}^{n-1} \sL^{-i}$.
  \end{lemma}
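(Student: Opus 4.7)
The plan is to bootstrap from the non\mbox{-}pair analogue, namely \cite[Lemma~3.1]{KovacsSchwedeDBDeforms}, by pushing forward the defining triangle for Du~Bois pairs and matching pieces via the $\mu_n$\mbox{-}eigenspace decomposition.

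First I would verify that the construction behaves well under restriction to $Z$. By choosing $s\in H^0(X,\sL^n)$ general (for $n\gg 0$, as permitted by semi-ampleness of $\sL$), its restriction $s|_Z\in H^0(Z,\sL^n|_Z)$ remains a general section of a semi-ample line bundle, so the cyclic cover $\eta|_W\: W\to Z$ is precisely the one produced by the same construction applied to $(Z,\sL|_Z,s|_Z)$. Hence the non\mbox{-}pair lemma applies verbatim to both $\eta$ and $\eta|_W$, giving
\[
\eta_*\DuBois{Y}\;\simeq\;\bigoplus_{i=0}^{n-1}\bigl(\DuBois{X}\otimes\sL^{-i}\bigr),\qquad (\eta|_W)_*\DuBois{W}\;\simeq\;\bigoplus_{i=0}^{n-1}\bigl(\DuBois{Z}\otimes\sL^{-i}|_Z\bigr).
\]

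Next I would apply $\eta_*=\myR\eta_*$ (equality since $\eta$ is finite) to the defining distinguished triangle $\DuBois{Y,W}\to\DuBois{Y}\to\DuBois{W}\xrightarrow{+1}$, obtaining the triangle
\[
\eta_*\DuBois{Y,W}\;\longrightarrow\;\eta_*\DuBois{Y}\;\longrightarrow\;\eta_*\DuBois{W}\;\xrightarrow{+1}.
\]
Both of the latter two terms carry a $\mu_n$\mbox{-}action (coming from the deck transformations of $\eta$, which preserve $W=\eta^{-1}(Z)$), and the isomorphisms above are the decompositions of these complexes into character eigenspaces for $\mu_n$. Since the middle map of the triangle is $\mu_n$\mbox{-}equivariant, it splits as a direct sum of maps between the $i$\mbox{-}th eigenspaces; unraveling the identifications, the $i$\mbox{-}th summand is the map
\[
\DuBois{X}\otimes\sL^{-i}\;\longrightarrow\;\DuBois{Z}\otimes\sL^{-i}|_Z
\]
induced from the natural $\DuBois{X}\to\DuBois{Z}$ by tensoring with $\sL^{-i}$. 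Taking cones in each eigenspace and invoking the definition of $\DuBois{X,Z}$ yields
\[
\eta_*\DuBois{Y,W}\;\simeq\;\bigoplus_{i=0}^{n-1}\bigl(\DuBois{X,Z}\otimes\sL^{-i}\bigr)\;\simeq\;\DuBois{X,Z}\otimes\eta_*\sO_Y,
\]
with the last identification just being $\eta_*\sO_Y=\bigoplus_{i=0}^{n-1}\sL^{-i}$; the compatibility with this decomposition claimed in the statement is then tautological from the construction.

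The main obstacle is the compatibility assertion in the middle step, namely that the two independent applications of the non\mbox{-}pair lemma (to $\eta$ and to $\eta|_W$) fit into the pushforward triangle in a way that respects the $\mu_n$\mbox{-}grading. This is the place where one must be careful: it essentially requires tracing through the proof of \cite[Lemma~3.1]{KovacsSchwedeDBDeforms}, whose identification is naturally $\mu_n$\mbox{-}equivariant, and observing that the natural map $\DuBois{Y}\to\DuBois{W}$ (being induced from $\sO_Y\to\sO_W$) is tautologically $\mu_n$\mbox{-}equivariant as well. Once this compatibility is in hand, every remaining step is formal manipulation of distinguished triangles.
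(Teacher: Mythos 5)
Your proposal is correct and follows essentially the same route as the paper: both push $\eta_*$ across the defining triangle for $\DuBois{Y,W}$, invoke the non-pair lemma \cite[Lemma~3.1]{KovacsSchwedeDBDeforms} for $\eta$ and $\eta|_W$, and use the functoriality/equivariance of those identifications with respect to $Z\to X$ to identify the third vertex with $\DuBois{X,Z}\otimes\eta_*\sO_Y$. Your $\mu_n$-eigenspace phrasing is just a more explicit way of stating the compatibility that the paper asserts as functoriality of the cited lemma.
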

  \begin{proof}
    Although not explicitly stated, it is easy to see that \cite[Lemma
    3.1]{KovacsSchwedeDBDeforms} is functorial in that it is compatible with the map
    $Z \to X$.  Then by applying
    \autoref{lem.BasicPropertiesOfDuBois}\autoref{item:2}, the result follows from
    the diagram:
    \[
    \xymatrix{ \eta_* \DuBois{Y,W} \ar[r] & \eta_* \DuBois{Y} \ar[r] & \eta_*
      \DuBois{W}
      \ar[r]^{+1} &\\
      \DuBois{X,Z} \tensor \eta_* \O_Y \ar@{.>}[u] \ar[r] & \DuBois{X}\ar[u]_{\simeq}
      \tensor \eta_* \O_Y \ar[r] & \DuBois{Z}\ar[u]_{\simeq} \tensor \eta_* \O_Y
      \simeq \DuBois{Z}\ar[u] \tensor_{\O_Z} \eta_* \O_Z \ar[r]^-{+1} &.  }
    \]
  \end{proof}

  Setting $\DBDual{X,Z} = \myR \sHom_{\O_X}^{\mydot}(\DuBois{X,Z}, \omega_X^{\mydot})$ as in
  \autoref{eq.DBDualDefinition}, we easily obtain the following.

  \begin{theorem}
    \label{thm.MainInjectivityForPairs}
    Let $X$ be a reduced scheme over $\bC$ and $Z \subseteq X$ a reduced subscheme.  Then the
    natural map
    \[
    \Phi^j : \myH^j(\DBDual{X,Z}) \into \myH^j(\myR \sHom_{\O_X}(\sI_Z,
    \omega_X^{\mydot}))
    \]
    is injective for every $j\in\bZ$.
  \end{theorem}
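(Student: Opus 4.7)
My approach is to adapt the proof of \cite[Theorem~3.3]{KovacsSchwedeDBDeforms}, replacing $\O_X$ by $\sI_Z$ and $\DuBois{X}$ by $\DuBois{X,Z}$ throughout, using the pair version of the cyclic cover compatibility in \autoref{lem.CompatibilityOfCyclicCoverAndDuBoisPairs} together with the surjectivity statement in \autoref{lem.BasicPropertiesOfDuBois}(a). Since $\Phi^j$ is a morphism of coherent sheaves and the question is local on $X$, I first shrink and compactify to reduce to the case where $X$ is projective with a very ample line bundle $\sL$. Injectivity of $\Phi^j$ can then be tested after twisting by $\sL^m$ for $m \gg 0$: it suffices to prove that for every sufficiently large $m$ the induced map on global sections of $\Phi^j \tensor \sL^m$ is injective, since a nonzero coherent kernel would acquire nonzero global sections after such a twist.

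For $m \gg 0$, Serre vanishing applied to the coherent $\sExt^i$-sheaves together with the local-to-global $\Ext$ spectral sequence identifies these twisted global sections with the global Ext groups $\Ext^j_{\O_X}(\DuBois{X,Z} \tensor \sL^{-m}, \omega_X^{\mydot})$ and $\Ext^j_{\O_X}(\sI_Z \tensor \sL^{-m}, \omega_X^{\mydot})$. By Grothendieck duality on the proper $X$, these are the Matlis duals of the hypercohomology groups $\bH^{-j}(X, \DuBois{X,Z} \tensor \sL^{-m})$ and $H^{-j}(X, \sI_Z \tensor \sL^{-m})$, respectively, and the injectivity I need becomes the surjectivity, for every such $m$, of
\[
H^{-j}(X, \sI_Z \tensor \sL^{-m}) \to \bH^{-j}(X, \DuBois{X,Z} \tensor \sL^{-m}).
\]

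To establish this surjectivity, fix $m$, choose $n > m$ large enough that \autoref{lem.CompatibilityOfCyclicCoverAndDuBoisPairs} applies, and let $\eta : Y \to X$ be the $n$-fold cyclic cover of $X$ obtained from a general section of $\sL^n$, with $W = \eta^{-1}(Z)$. Since $Y$ is again projective, \autoref{lem.BasicPropertiesOfDuBois}(a) provides a surjection $H^i(Y, \sI_W) \twoheadrightarrow \bH^i(Y, \DuBois{Y,W})$ for every $i$. Pushing forward along the finite morphism $\eta$, the identifications $\eta_* \sI_W \simeq \bigoplus_{k=0}^{n-1} \sI_Z \tensor \sL^{-k}$ (immediate from $\sI_W = \sI_Z \cdot \O_Y$) and $\eta_* \DuBois{Y,W} \simeq \bigoplus_{k=0}^{n-1} \DuBois{X,Z} \tensor \sL^{-k}$ (which is \autoref{lem.CompatibilityOfCyclicCoverAndDuBoisPairs}) decompose this surjection as a direct sum of surjections indexed by $k$; extracting the $k = m$ summand yields exactly the required surjection. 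Letting $n$ range over arbitrarily large integers covers every $m \gg 0$.

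The principal subtlety I expect is verifying that the natural morphism $\sI_W \to \DuBois{Y,W}$ respects both direct sum decompositions, so that the surjection on $Y$ really splits as a coproduct of the individual surjections on $X$. This is the functoriality content of the compatibility lemma \autoref{lem.CompatibilityOfCyclicCoverAndDuBoisPairs} (essentially already built into its proof), so once that compatibility is in hand the rest is a routine duality computation. A secondary concern---making the reduction to the projective case rigorous when $X$ is only essentially of finite type---is standard, since all sheaves in sight are coherent and the map $\Phi^j$ behaves well under localization and compactification.
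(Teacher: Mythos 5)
Your proposal is correct and is essentially the paper's own argument: reduce to the projective case, use the cyclic-cover compatibility of \autoref{lem.CompatibilityOfCyclicCoverAndDuBoisPairs} together with the surjectivity of \autoref{lem.BasicPropertiesOfDuBois}(a) to get surjectivity of $H^j(X,\sI_Z\tensor\sL^{-m})\to\bH^j(X,\DuBois{X,Z}\tensor\sL^{-m})$, and then convert to the injectivity statement via Serre--Grothendieck duality and Serre vanishing. The only difference is cosmetic ordering (you dualize first and then prove the surjectivity, while the paper proves the surjectivity first and then dualizes), and your handling of choosing $n>m$ for each twist is, if anything, slightly more careful than the paper's.
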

  \begin{proof}
    The proof is essentially the same as in \cite[Theorem
    3.3]{KovacsSchwedeDBDeforms} so we only sketch it briefly.  First, since the
    question is local and compatible with restricting to an open subset, we may
    assume that $X$ is projective with ample line bundle $\sL$.  It follows from
    taking a cyclic cover with respect to a general section of $\sL^n$, for $n \gg
    0$, and applying \autoref{lem.BasicPropertiesOfDuBois}\autoref{item:1} and
    \autoref{lem.CompatibilityOfCyclicCoverAndDuBoisPairs} that $H^j(X, \sI_Z \otimes
    \bigoplus_{i = 0}^{n-1} \sL^{-i}) \to \bH^j(X, \DuBois{X,Z} \tensor \bigoplus_{i
      = 0}^{n-1} \sL^{-i})$ surjects for all $j \geq 0$.  Therefore $H^j(X, \sI_Z
    \otimes \sL^{-i}) \to \bH^j(X, \DuBois{X,Z} \tensor \sL^{-i})$ surjects for all
    $i,j \geq 0$.

    By an application of Serre-Grothendieck duality we obtain an injection
    \begin{equation}
    \label{eq.InjectPreSS}
    \bH^{j}(X, \DBDual{X,Z} \tensor \sL^{i}) \into \bH^{j}(X, \myR
    \sHom_{\O_X}^{\mydot}(\sI_Z, \omega_X^{\mydot}) \tensor \sL^{i})
    \end{equation}
    for all $i,j \geq 0$.  But for $i \gg 0$, by Serre vanishing, we obtain that
    \begin{equation}
    \label{eq.InjectPostSS}
    H^0(X, \myH^j (\DBDual{X,Z}) \tensor \sL^{i}) \into H^{0}(X, \myH^j (\myR
    \sHom_{\O_X}^{\mydot}(\sI_Z, \omega_X^{\mydot}) )\tensor \sL^{i})
    \end{equation}
    is injective as well (since the spectral sequence computing
    \autoref{eq.InjectPreSS} degenerates).  On the other hand, if $\myH^j
    (\DBDual{X,Z}) \to \myH^j (\myR \sHom_{\O_X}^{\mydot}(\sI, \omega_X^{\mydot}))$
    is not injective, for some $i \gg 0$ neither is \autoref{eq.InjectPostSS}.  This
    completes the proof.
  \end{proof}

\section{Deformation of Du Bois pairs}

In \cite[Corollary~4.2]{KovacsSchwedeDBDeforms}, 
we showed the following result: Let $f : X \to B$ be a flat proper family over a
smooth curve $B$ with a fiber $X_0$, $0 \in B$, having Du Bois singularities.  Then
there is an open neighborhood $0 \in U \subseteq B$ such that the fibers $X_u$ have
Du Bois singularities for $u \in U$.  In this section, we generalize this result to
Du Bois pairs.  We mimic our previous approach as much as possible.

First we need a lemma which is presumably well known but for which we know no
reference.

\begin{lemma}
  \label{lem.transversality}
  Let $X$ be a reduced scheme and $Z\subseteq X$ a reduced subscheme with ideal
  sheaf $\sI_Z$. Further let $H\subseteq X$ be an effective Cartier divisor with ideal
  sheaf $\sI_H$ such that $H$ does not contain any irreducible components of either
  $X$ or $Z$. Then $$\sI_H\cap\sI_Z =\sI_H\cdot \sI_Z.$$
\end{lemma}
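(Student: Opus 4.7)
The statement is local, and the inclusion $\sI_H \cdot \sI_Z \subseteq \sI_H \cap \sI_Z$ is automatic, so the plan is to reduce to a local algebra statement and prove the reverse inclusion there. Passing to the local ring $R = \O_{X,x}$ for an arbitrary point $x \in X$, the ideal $\sI_H$ becomes $(f)$ for a single element $f$, since $H$ is Cartier. Writing $I$ for the stalk of $\sI_Z$ at $x$, the claim becomes $fR \cap I = fI$.

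Next I would observe that this equality is equivalent to $f$ being a non-zerodivisor on $R/I$: given $g = fr \in fR \cap I$, the condition $fr \in I$ says $f \cdot \bar r = 0$ in $R/I$, and if $f$ is a non-zerodivisor on $R/I$ then $\bar r = 0$, so $r \in I$ and $g \in fI$. Conversely, if $f$ were a zerodivisor on $R/I$ this implication would fail.

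The main point is therefore to check that $f$ is a non-zerodivisor on $R/I$, and here both hypotheses about $H$ come in. Because $Z$ is reduced, $R/I$ is reduced, so its associated primes are precisely its minimal primes, which correspond to the irreducible components of $Z$ through $x$. Hence the set of zerodivisors of $R/I$ is the union of the primes corresponding to components of $Z$ through $x$. The hypothesis that $H$ does not contain any component of $Z$ exactly says that $f$ lies in none of these primes, so $f$ is indeed a non-zerodivisor on $R/I$. (The hypothesis that $H$ contains no component of $X$ plays the analogous, simpler role of ensuring that $f$ itself is a non-zerodivisor in $R$, so that $\sI_H = (f)$ is genuinely invertible.)

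I do not expect a serious obstacle here; the only subtlety is being careful that "Cartier divisor" is used in its effective-Cartier sense so that $\sI_H$ is an invertible ideal locally generated by a non-zerodivisor, and that one may read off the associated primes of $R/I$ from the reducedness of $Z$.
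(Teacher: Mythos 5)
Your proof is correct, and it takes a somewhat different (and more streamlined) route than the paper, though both ultimately rest on the same prime-avoidance input. The paper works directly with the decomposition $I=\bigcap_i\frp_i$ into minimal primes and proves two explicit claims: first that $(f)\cap\frp=f\frp$ whenever $f\notin\frp$, and second that $\bigcap_i f\frp_i=f\bigl(\bigcap_i\frp_i\bigr)$; it is in this second step that the hypothesis that $H$ contains no component of $X$ (equivalently, that $f$ avoids the minimal primes of the reduced ring $A$) is genuinely used. You instead invoke the standard criterion that if $f$ is a non-zerodivisor on $R/I$ then $fR\cap I=fI$, and verify the non-zerodivisor condition from reducedness of $Z$ (so the associated primes of $R/I$ are its minimal primes, which correspond to the components of $Z$ through $x$) together with the hypothesis that $H$ contains no component of $Z$. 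This is slightly more economical and even shows that the hypothesis on components of $X$ is not needed for the ideal-theoretic equality itself; as you note, it only enters through the convention (stated in the paper just before the deformation theorem) that a Cartier divisor is locally defined by a non-zerodivisor. One small remark: the converse implication you sketch (failure of the equality when $f$ is a zerodivisor on $R/I$) is not needed for the lemma, and as stated it silently uses that $f$ is a non-zerodivisor on $R$; this is harmless in context but worth being aware of.
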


\begin{proof}
This is left as an exercise to the reader.  Earlier versions of this paper, which are available on the arXiv, also contain a detailed proof.
\end{proof}
Now we prove that if a special fiber supports a Du Bois pair, so does the total space
near that fiber.  Recall that effective Cartier divisors on a
possibly non-normal scheme are simply subschemes locally defined by a single
non-zero-divisor near every point.

\begin{theorem}
  \label{thm.MainTheoremDeformation}
  Let $X$ be a reduced scheme essentially of finite type over $\bC$, $Z \subseteq X$ a
  reduced subscheme and $H$ a reduced effective Cartier divisor on $X$ that does not
  contain any component of $Z$.  If $(H, Z\cap H)$ is a \DB pair, then $(X,Z)$ is a
  \DB pair near $H$.  It then follows from \autoref{lem.TwoOfThreeDuBoisForPairs} that $(X, Z \cup H)$ is Du Bois near $H$.
\end{theorem}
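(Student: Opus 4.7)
The plan is to adapt the non-pair deformation theorem of \cite{KovacsSchwedeDBDeforms} using the pair Injectivity Theorem \autoref{thm.MainInjectivityForPairs} as the central tool. First I would localize near $H$ and, if convenient, compactify so that $X$ is projective with an ample line bundle $\sL$. Let $K^{\mydot}$ denote the cone of the natural map $\sI_Z \to \DuBois{X, Z}$; the pair $(X, Z)$ is \DB exactly on the open complement of $\supp K^{\mydot}$, so the goal is to show $K^{\mydot} = 0$ in a neighborhood of $H$.

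The second step is to build a morphism of distinguished triangles comparing the ideal-sheaf side and the Du~Bois side. The transversality lemma \autoref{lem.transversality} (applicable since $H$ contains no component of $Z$) gives the short exact sequence $0 \to \sI_Z(-H) \to \sI_Z \to \sI_{Z \cap H \subseteq H} \to 0$ via the identification $\sI_{Z\cup H} = \sI_Z\cdot \sI_H = \sI_Z(-H)$, and \autoref{lem.TwoOfThreeDuBoisForPairs} supplies the bottom distinguished triangle of the diagram
\[
\xymatrix{
\sI_Z(-H) \ar[d] \ar[r] & \sI_Z \ar[d] \ar[r] & \sI_{Z \cap H \subseteq H} \ar[d]^{\simeq} \\
\DuBois{X, Z \cup H} \ar[r] & \DuBois{X, Z} \ar[r] & \DuBois{H, Z \cap H}
}
\]
The right column is a quasi-isomorphism by hypothesis, so by the octahedral axiom the cones of the left and middle vertical maps coincide in $D^b_{\coherent}(X)$. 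Thus showing $K^{\mydot}$ vanishes near $H$ is equivalent to showing that $(X, Z \cup H)$ is \DB near $H$; the advantage of this reformulation is that the Cartier divisor $H$ is now built into the subscheme.

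The heart of the argument, and the main obstacle, is to propagate Du~Bois-ness from the divisor $H$ to a neighborhood of $H$ in $X$. Dualize the defining triangle of $K^{\mydot}$ against $\omega_X^{\mydot}$ to obtain
\[
\myR\sHom(K^{\mydot}, \omega_X^{\mydot}) \to \DBDual{X, Z} \to \myR\sHom(\sI_Z, \omega_X^{\mydot}) \xrightarrow{+1},
\]
in which \autoref{thm.MainInjectivityForPairs} forces every cohomology sheaf of the right-hand map to be injective. Combining this cohomological constraint with a cyclic-cover construction of the type in \autoref{lem.CompatibilityOfCyclicCoverAndDuBoisPairs} (tensor with high powers of $\sL$ and push forward from a cyclic cover branched along a general section of $\sL^n$) together with Serre vanishing, one converts the $H$-level quasi-isomorphism already obtained into the required Zariski-local vanishing of $K^{\mydot}$. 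This propagation step --- bridging derived-restriction vanishing along a specific divisor and vanishing in a Zariski neighborhood --- is the analog of the corresponding step in the non-pair case and is precisely what the pair Injectivity Theorem is designed to enable. Once $(X, Z)$ is shown to be \DB near $H$, the concluding assertion that $(X, Z \cup H)$ is also \DB near $H$ follows immediately from the two-of-three principle in \autoref{lem.TwoOfThreeDuBoisForPairs}.
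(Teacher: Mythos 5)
There is a genuine gap at what you yourself call the heart of the argument. Your reductions are fine: the transversality lemma gives $\sI_{Z\cup H}=\sI_Z\cdot\sI_H$, the comparison of the ideal-sheaf sequence with the triangle of \autoref{lem.TwoOfThreeDuBoisForPairs} shows that the cone of $\sI_Z\to\DuBois{X,Z}$ agrees with the cone of $\sI_{Z\cup H}\to\DuBois{X,Z\cup H}$, and the final two-of-three remark is correct. But after this reformulation the remaining task --- show that $(X,Z\cup H)$, equivalently $(X,Z)$, is \DB \emph{near} $H$ given only a quasi-isomorphism on $H$ --- is exactly the original theorem, and at this point you only assert that ``combining the injectivity theorem with a cyclic-cover construction and Serre vanishing'' converts the $H$-level statement into vanishing of $K^{\mydot}$ in a neighborhood. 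No such mechanism is described, and the tools you invoke are misplaced: cyclic covers, tensoring by $\sL^{i}$ and Serre vanishing are the ingredients in the proof of \autoref{thm.MainInjectivityForPairs} itself, not of the propagation step. Note also that your triangle is not set up for a bootstrapping argument, since $\DuBois{X,Z\cup H}$ is \emph{not} $\DuBois{X,Z}\otimes\O_X(-H)$, so nothing in your diagram lets you conclude that the common cone is killed by a local equation of $H$.

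The paper's actual mechanism, which is absent from your proposal, is the following. Localize at a closed point of $H$, with $f$ a local equation of $H$ in the local ring $R$, and compare the multiplication-by-$f$ triangles on $I_Z$ and on $\DuBois{X,Z}$; the third term of the first is $I_Z/fI_Z\cong\sI_{Z\cap H\subseteq H}$ by \autoref{lem.transversality}, and one must construct a map $\tau$ from the third term $A^{\mydot}$ of the second to $\DuBois{H,Z\cap H}$ so that the composite $\tau\circ\rho$ is a quasi-isomorphism --- this uses the non-pair construction of \cite[Theorem 4.1]{KovacsSchwedeDBDeforms} applied to both $X$ and $Z$ (here one needs $H\cap Z$ Cartier on $Z$, i.e.\ the hypothesis that $H$ contains no component of $Z$). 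One then applies $\myD(\blank)$ and takes cohomology, getting a ladder of long exact sequences in which the maps $\Phi^{i}\colon\myH^{i}(\DBDual{X,Z})\to\myH^{i}(\myD(I_Z))$ are injective by \autoref{thm.MainInjectivityForPairs} and the map induced by $\rho$ is surjective; a diagram chase shows that any $z\in\myH^{i-1}(\myD(I_Z))$ agrees with an element of the image of $\Phi^{i-1}$ modulo $f\cdot\myH^{i-1}(\myD(I_Z))$, and then Nakayama's lemma applied to the cokernel of $\Phi^{i-1}$ over the local ring $R$ forces $\Phi^{i-1}$ to be surjective, hence an isomorphism, so $(X,Z)$ is \DB at the chosen point. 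It is precisely this duality--injectivity--Nakayama chase that bridges the divisor and its neighborhood, and your proposal contains no substitute for it; in particular ``compactify and use Serre vanishing'' cannot replace the local Nakayama step, which is where the hypothesis on the special fiber actually propagates.
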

\begin{proof}
  We follow very closely the proof of \cite[Theorem 3.2]{KovacsDuBoisLC2} and \cite[Theorem 4.1]{KovacsSchwedeDBDeforms}, which are based on \cite{ElkikDeformationsOfRational}.
  Choose a closed point ${\bq}$ of $X$ contained within $H$.  It is sufficient
  to prove that $(X, Z)$ is \DB at ${\bq}$.  Let $R$ denote the stalk $\O_{X, {\bq}}$
  and replace $X$ by $\Spec R$.  Choose $f \in R$ to denote a defining equation of
  $H$ in $R$.  Consider the following diagram whose rows are distinguished triangles
  in $D^{b}_{\coherent}(X)$:

  \begin{equation}
    \label{eq.DiagramToDual}
    \begin{aligned}
      \xymatrix@C=4em {%
        I_Z \ar[d] \ar[r]^{\times f} & I_Z \ar[d] \ar[r] & I_Z/ (f\cdot
        I_Z)\ar[d]_{\rho} \ar@/^2em/[dd]|!{[d];[dr]}\hole^(.7){\qis} \ar[r]^-{+1}
        & \\
        \DuBois{X,Z} \ar[r]_{\times f} & \DuBois{X,Z} \ar[r] & A^{\mydot}
        \ar@{.>}[d]_{\tau}    \ar[r]^-{+1} & \\
        & & \DuBois{H, Z\cap H}
        &\\
      }
    \end{aligned}
  \end{equation}
  where $A^{\mydot}$ is the term completing the second row to a distinguished
  triangle.  We claim we have a map $\tau$ as above such that $\tau \circ \rho$ is a
  quasi-isomorphism.  Certainly we have a diagram with distinguished triangles for
  rows and columns
  \[
  \xymatrix@R=4pt{ \DuBois{X,Z} \ar[dd] \ar[r]^{\times f} & \DuBois{X,Z} \ar[dd]
    \ar[r] & A^{\mydot}
    \ar@/_0.5pc/@{.>}[drrr]^{\tau} \ar[dd] \ar[r]^{+1} &\\
    & & & & & \DuBois{H, Z\cap H} \ar[dd]\\
    \DuBois{X} \ar[dd] \ar[r]^{\times f} & \DuBois{X} \ar[dd] \ar[r] & B^{\mydot}
    \ar[dd] \ar[r]^{+1} \ar@/_0.5pc/@{.>}[drrr]^{\kappa} & & &   \\
    & & & & & \DuBois{H}\ar[dd] \\
    \DuBois{Z} \ar[r]^{\times f} \ar[dd]_{+1} & \DuBois{Z} \ar[r] \ar[dd]_{+1}&
    C^{\mydot} \ar[r]^{+1} \ar[dd]_{+1}\ar@/_0.5pc/@{.>}[drrr]_{\mu} & & &  \\
    & & & & & \DuBois{Z \cap H} \ar[dd]^-{+1}\\
    & & & & & \\
    & & & & & \\
  }
  \]
  and the existence of $\tau$ follows immediately from the existence of $\kappa$ and
  $\mu$ whose existence follows from the proof of \cite[Theorem
  4.1]{KovacsSchwedeDBDeforms}.  Note that the assumptions imply that $H|_Z = H \cap
  Z$ is a Cartier divisor on $Z$, so we may indeed use \cite[Theorem
  4.1]{KovacsSchwedeDBDeforms} for both $X$ and $Z$.
  Since $I_Z / (f \cdot I_Z) = I_Z / ( (f) \cap I_Z)$ by \autoref{lem.transversality}
  and because $(H, Z \cap H)$ is a \DB pair, we see $\tau \circ \rho$ is an
  isomorphism as claimed.

  Next we apply the Grothendieck duality functor $\myD(\blank) = \myR
  \Hom_R^{\mydot}(\blank, \omega_R^{\mydot})$ to \autoref{eq.DiagramToDual} and take
  cohomology:
  \[
  {\small \xymatrix@C=16pt{%
      \dots \ar@{<-}[r] & \myH^i(\myD(I_Z)) \ar@{<-_{)}}[d]^{\Phi^i} &
      \ar[l]_-{\times f} \myH^i(\myD(I_Z)) \ar@{<-_{)}}^{\Phi^i}[d]
      \ar@{<-}[r]^-{\delta_i} & \myH^i(\myD(I_Z/(f \cdot I_Z))
      \ar@{<<-}[d]^-{\gamma_i} \ar@{<-}[r]^-{\alpha_i} & \myH^{i-1}(\myD(I_Z))
      \ar@{<-_{)}}[d]^{\Phi^{i-1}} \ar@{<-}[r]^-{\times f} &
      \myH^{i-1}(\myD(I_Z)) \ar@{<-_{)}}[d]^{\Phi^{i-1}}  & \ar[l]\cdots  \\
      \dots \ar@{<-}[r] & \myH^i(\DBDual{X,Z}) \ar@{<-}[r]_{\times f} & \myH^i
      (\DBDual{X,Z}) \ar@{<-}[r] & \myH^i(\myD(A^{\mydot})) \ar@{<-}[r]_-{\beta_i} &
      \myH^{i-1}(\DBDual{X,Z} \ar@{<-}[r]_-{\times f}) & \myH^{i-1}(\DBDual{X,Z})}}
  \]
  where the $\Phi^\mydot$ are injective by \autoref{thm.MainInjectivityForPairs} and
  $\gamma_i$, which was obtained from $\rho$, is surjective since $\tau \circ \rho$
  is an isomorphism.

  The proof now follows exactly as the main theorem of \cite{KovacsSchwedeDBDeforms},
  or dually of \cite[Theorem 3.2]{KovacsDuBoisLC2}.  Fix $z \in \myH^{i-1}
  (\myD(I_Z))$.  Pick $w \in \myH^i(\myD(A^{\mydot}))$ such that $\alpha_i(z) =
  \gamma_i(w)$.  Since $\delta_i(\alpha_i(z)) = 0$ and $\Phi^i$ is injective, it
  follows that there exists a $u \in \myH^{i-1} (\DBDual{X,Z})$ such that $\beta_i(u)
  = w$.  Therefore, $\alpha_i(\Phi^{i-1}(u)) = \alpha_i(z)$ and so
  \begin{equation}
    \label{eq.diffInFTimes}
    z - \Phi^{i-1}(u) \in f \cdot \myH^{i-1} (\myD(I_Z)).
  \end{equation}

  Now, fix $E_{i-1}$ to be the cokernel of $\Phi^{i-1}$ and set $\overline{z} \in
  E_{i-1}$ to be the image of $z$.  Equation (\ref{eq.diffInFTimes}) then guarantees
  that $\overline{z} \in f \cdot E_{i-1}$.  The multiplication map $\xymatrix{E_{i-1}
    \ar[r]^{\times f} & E_{i-1}}$ is then surjective and so Nakayama's lemma
  guarantees that $\Phi^{i-1}$ is also surjective.  Therefore $\DBDual{X,Z} \to
  \myD(I_Z)$ is a quasi-isomorphism which implies that $(X, Z)$ is a \DB pair.
\end{proof}

We immediately obtain:

\begin{corollary}\label{cor.CurveDBOpenness}
  Let $f : X \to B$ be a flat proper family of varieties over a smooth
  one-dimensional scheme $B$ essentially of finite type over $\bC$ (for instance, a
  smooth curve).  Further let $Z \subseteq X$ be a subscheme such that no component
  of $Z$ is contained in any component of any fiber of $f$ and $b\in B$ a closed
  point such that $(X_b,Z_b)$ is a \DB pair.  Then there exists a neighborhood $b\in
  U\subseteq B$ such that
  \begin{enumerate}
  \item $(X, Z)$ is \DB over $U$, and
  \item the fibers $(X_u, Z_u)$ are \DB for all $u \in U$.
  \end{enumerate}
\end{corollary}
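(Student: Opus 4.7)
The plan is to establish~(1) as a direct application of \autoref{thm.MainTheoremDeformation} and then to deduce~(2) by applying that theorem once more to a modified subscheme, combined with \autoref{lem.TwoOfThreeDuBoisForPairs} and the properness of $f$.

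For~(1), I would apply \autoref{thm.MainTheoremDeformation} with $H = X_b$.  Since $B$ is smooth and one-dimensional and $b$ is a closed point, $X_b = f^{-1}(b)$ is a Cartier divisor on $X$; by \autoref{lem.DBPairImpliesReduced} applied to the \DB pair $(X_b, Z_b)$, it is reduced; and by hypothesis it does not contain any component of $Z$.  Theorem~A then produces an open $V \subseteq X$ containing $X_b$ on which $(X, Z)$ is a \DB pair.  Properness of $f$ gives that $f(X \setminus V)$ is closed and avoids $b$, so $U := B \setminus f(X \setminus V)$ is an open neighborhood of $b$ with $f^{-1}(U) \subseteq V$; this proves~(1).

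For~(2), fix $u \in U$.  The case $u = b$ is the hypothesis, so assume $u \neq b$; since $B$ is one-dimensional, $X_b$ and $X_u$ are disjoint.  I would apply \autoref{thm.MainTheoremDeformation} a second time with $H = X_b$ but now to the reduced subscheme $Z \cup X_u$: the hypotheses hold because $X_b$ contains no component of $Z$ (by assumption) nor of $X_u$ (disjointness), while $(X_b, (Z \cup X_u) \cap X_b) = (X_b, Z_b)$ is a \DB pair.  Hence $(X, Z \cup X_u)$ is a \DB pair on some open $V_u \subseteq X$ containing $X_b$.  By properness, $V_u \supseteq f^{-1}(W_u)$ for some open $W_u \subseteq B$ containing $b$; provided $u \in W_u$, this forces $X_u \subseteq V_u$, so $(X, Z \cup X_u)$ is \DB near $X_u$ as well.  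Combining this with $(X, Z)$ being \DB on $f^{-1}(U)$ from~(1), the short exact sequence $0 \to \sI_{Z \cup X_u} \to \sI_Z \to \sI_{Z_u \subseteq X_u} \to 0$ and \autoref{lem.TwoOfThreeDuBoisForPairs} applied with $W = X_u$ then force $(X_u, Z_u)$ to be a \DB pair.

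The main obstacle is a uniformity issue: the neighborhood $W_u$ \emph{a priori} depends on the subscheme $Z \cup X_u$, hence on $u$, whereas one needs $u \in W_u$ to conclude.  I expect this to be resolved by inspecting the Nakayama step in the proof of \autoref{thm.MainTheoremDeformation}---where one obtains surjectivity of $\Phi^{i-1}$ at each closed point of $X_b$---to verify that the relevant open sets can be chosen uniformly as $u$ varies in a small neighborhood of $b$.  Once such a uniform $W$ is in place, shrinking $U$ so that $U \subseteq W$ completes the proof.
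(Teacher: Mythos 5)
Part (1) of your proposal is exactly the paper's argument: apply \autoref{thm.MainTheoremDeformation} with $H = X_b$ and use that the non-\DB locus is closed together with properness of $f$. The problem is part (2). Applying \autoref{thm.MainTheoremDeformation} to the subscheme $Z \cup X_u$ with $H = X_b$ only yields that $(X, Z\cup X_u)$ is a \DB pair \emph{in a neighborhood of $X_b$}; the open set $W_u \subseteq B$ you then produce excludes exactly the image of the non-\DB locus of $(X, Z \cup X_u)$, so the condition ``$u \in W_u$'' says precisely that $(X, Z\cup X_u)$ is a \DB pair at every point of $X_u$. But given (1) and \autoref{lem.TwoOfThreeDuBoisForPairs} with $W = X_u$, that condition is \emph{equivalent} to $(X_u, Z_u)$ being a \DB pair, which is the statement being proved; so the ``uniformity issue'' you flag is not a technicality but a circularity. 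The suggested repair cannot work: in the proof of \autoref{thm.MainTheoremDeformation} the injectivity-plus-Nakayama argument is carried out after localizing at a closed point of $H$, so it proves surjectivity of $\Phi^{i-1}$ only at points of $H = X_b$ and gives no control over the non-\DB locus away from $X_b$, in particular not along $X_u$. (A smaller point: \autoref{lem.DBPairImpliesReduced} gives reducedness of $Z$ when the ambient scheme is reduced; it does not give reducedness of $X_b$, which must come from the hypothesis that $f$ is a family of varieties or be argued separately.)

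What is genuinely needed for (2), and what the paper uses, is a generic restriction statement rather than a second application of the deformation theorem: by (1) one may shrink $B$ and assume $(X,Z)$ is \DB, and then the Bertini-type result \autoref{lem.BasicPropertiesOfDuBois}(b), namely $\DuBois{X,Z}\otimes \O_H \qis \DuBois{H, Z\cap H}$ for a general member $H$ of a base point free linear system, shows that $(X_u, Z_u)$ is a \DB pair for \emph{general} $u$. Since $B$ is one-dimensional, the locus of good fibers contains a dense open subset of $B$ together with the point $b$ itself, so after deleting the finitely many bad points (none of which is $b$) one obtains the desired neighborhood $U$. In short, the deformation theorem controls the total space near the special fiber, but passing from the total space to nearby fibers requires the Bertini-type lemma, which is the ingredient missing from your argument.
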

\begin{proof}
  The non-\DB locus $T$ of $(X, Z)$ is closed, and since $f$ is proper, $f(T)$ is
  also closed.  Hence (a) follows from \autoref{thm.MainTheoremDeformation} and by
  replacing $B$ with an open set, we may assume that $(X, Z)$ is \DB. Then the
  Bertini type theorem \autoref{lem.BasicPropertiesOfDuBois}\autoref{item:2} implies
  that (b) follows after possibly shrinking $U$.
\end{proof}


\begin{corollary}
  \label{cor.DBDeformsArbitraryBase}
  Let $f : X \to B$ be a flat proper family of varieties over a smooth scheme $B$
  essentially of finite type over $\bC$.  Further let $Z \subseteq X$ be a subscheme
  which is also flat over $B$ and $b\in B$ a closed point such that $(X_b,Z_b)$ is a
  \DB pair.  Then there exists a neighborhood $b\in U\subseteq B$ such that $(X, Z)$
  is \DB over $U$.
\end{corollary}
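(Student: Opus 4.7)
The plan is to induct on $\dim B$, with the one-dimensional case being \autoref{cor.CurveDBOpenness}. For the inductive step, I would slice $B$ by a smooth hypersurface through $b$, apply the inductive hypothesis to the restricted family, and then propagate from the Cartier slice back to the total space via \autoref{thm.MainTheoremDeformation}. To set this up, I would first shrink $B$ around $b$ so that every fiber of $f$ is (geometrically) reduced. This uses the openness of the geometrically reduced locus for a flat morphism of finite type, together with the reducedness of $X_b$ (which I take to be part of the \DB pair hypothesis on $(X_b,Z_b)$). Once all fibers are reduced, flatness of $f$ over the reduced base $B$ with reduced fibers implies that $f^{-1}(S)$ is reduced for every reduced closed subscheme $S\subseteq B$.

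Next, since $B$ is smooth at $b$, after possibly shrinking further I can choose a smooth hypersurface $H_0\subseteq B$ passing through $b$ (for instance, by taking one element from a regular system of parameters at $b$). Set $X_1 := f^{-1}(H_0)$ and $Z_1 := Z\times_B H_0 = Z\cap X_1$. By base change, $X_1\to H_0$ is flat and proper and $Z_1\to H_0$ is flat, while the fiber over $b$ is still $(X_b,Z_b)$, a \DB pair. The inductive hypothesis applied to $X_1\to H_0$ then produces an open neighborhood of $b$ in $H_0$ over which $(X_1,Z_1)$ is \DB; in particular $(X_1,Z_1)$ is a \DB pair at every closed point of $f^{-1}(b)\subseteq X_1$.

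Finally, I would apply \autoref{thm.MainTheoremDeformation} pointwise at each closed point of $f^{-1}(b)$: the divisor $X_1\subseteq X$ is a reduced effective Cartier divisor (Cartier from flatness of $f$ and the Cartier-ness of $H_0$; reduced by the preparatory step), and contains no irreducible component of $Z$ since $Z$ is flat over $B$ and $H_0\subsetneq B$. Thus $(X,Z)$ is a \DB pair at every point of $f^{-1}(b)$. Properness of $f$ then turns this pointwise statement into an open one: the non-\DB locus of $(X,Z)$ is closed in $X$ and disjoint from $f^{-1}(b)$, so its image in $B$ is closed and misses $b$, and the complement is the required neighborhood $U$. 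The step I expect to be the main obstacle is the preparatory reduction to the case of everywhere-reduced fibers, since \autoref{thm.MainTheoremDeformation} requires both $X_1$ reduced and $Z$ to meet $X_1$ without any component being swallowed; the latter comes for free from flatness, but the former is what forces us to shrink $B$. After that, the dimension induction is essentially formal.
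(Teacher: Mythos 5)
Your proposal is correct and is essentially the paper's own argument: the paper chooses general smooth hypersurfaces $H_1,\dots,H_d$ through $b$ and climbs up one Cartier slice at a time via \autoref{cor.CurveDBOpenness} (i.e.\ \autoref{thm.MainTheoremDeformation}), shrinking $B$ at each step using properness, which is exactly your induction on $\dim B$ written out as an iteration. Your preparatory reduction to geometrically reduced fibers and the flatness argument that no component of $Z$ lies in the slice are harmless elaborations of hypotheses the paper leaves implicit.
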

\begin{proof}
  We may assume that $B$ is affine and let $d = \dim B$.  We first show that $(X, Z)$
  itself is \DB in a neighborhood of $(X_b, Z_b)$.
  Let $H_1, \ldots, H_d$ be general smooth subschemes going through $b$ whose local
  defining equations generate the maximal ideal of $b$ (i.e., locally analytically
  they are coordinate hyperplanes).  The pair $(X_b, Z_b) = (X_{H_1 \cap H_2 \cap
    \ldots \cap H_d}, Z_{H_1 \cap H_2, \cap \ldots \cap H_d})$ is \DB by assumption,
  hence since $X_b = X_{H_1 \cap \ldots \cap H_d}$ is a hypersurface in $X_{H_2 \cap
    \ldots H_d}$ it follows that the pair $(X_{H_2 \cap \ldots \cap H_d}, Z_{H_2,
    \cap \ldots \cap H_d})$ is \DB in a neighborhood of $X_b$ by
  \autoref{cor.CurveDBOpenness}.  Let $W_1$ denote the non-\DB locus of $(X_{H_2 \cap
    \ldots \cap H_d}, Z_{H_2, \cap \ldots \cap H_d})$. Since $W_1$ is closed and $f$
  is proper, we see that $f(W_1)$ is closed in $H_2 \cap \ldots \cap H_d$ and doesn't
  contain $b$.  Shrinking $B$ if necessary, we may assume that $W_1$ is empty.  Next
  observe that $H_2 \cap \ldots \cap H_d$ is a hypersurface in $H_3 \cap \ldots \cap
  H_d$ and so again we see that $(X_{H_3 \cap \ldots \cap H_d}, Z_{H_3, \cap \ldots
    \cap H_d})$ is \DB in a neighborhood of $X_{H_2 \cap \ldots \cap H_d}$ by
  \autoref{cor.CurveDBOpenness}.  Set $W_2$ to be the non-\DB locus of $(X_{H_3 \cap
    \ldots \cap H_d}, Z_{H_3, \cap \ldots \cap H_d})$ and note that $f(W_2)$ does not
  intersect $H_2 \cap \ldots \cap H_d$.  We shrink $B$ again if necessary so that
  $W_2 = \emptyset$.  Iterating this procedure proves the statement.
\end{proof}

In order to extend \autoref{cor.CurveDBOpenness}(b) to families over arbitrary
dimensional bases we need the following lemma.

\begin{lemma}
  \label{lem.DBdiscriminant} Let $f : X \to B$ be a flat proper family of varieties
  over a scheme $B$ essentially of finite type over $\bC$.  Further let $Z \subseteq
  X$ be a subscheme which is also flat over $B$ and assume that $(X, Z)$ is a \DB
  pair.  Then
  $$
  V=\{ b\in B \vert (X_b,Z_b) \text{ is a \DB pair} \}
  $$
  is a constructible set in $B$. Furthermore, if $B$ is smooth, then $V$ is open.
\end{lemma}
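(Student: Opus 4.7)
The plan is to first observe that, under the lemma's hypotheses, fiberwise \DB-ness translates into a condition on a single base-change map of complexes, and then combine Bertini (\autoref{lem.BasicPropertiesOfDuBois}(b)), upper semi-continuity (for openness in the smooth case), and Noetherian induction (for constructibility) to control this condition.

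The key reduction is as follows.  Since $X \to B$ and $Z \to B$ are both flat, the short exact sequence $0 \to \sI_Z \to \O_X \to \O_Z \to 0$ implies $\sI_Z$ is also flat over $B$; hence the natural map $\sI_Z \otimes \O_{X_b} \to \sI_{Z_b}$ is an isomorphism for every closed point $b\in B$.  Combined with the assumed quasi-isomorphism $\sI_Z \simeq \DuBois{X,Z}$, this yields a canonical identification $\sI_{Z_b} \simeq \DuBois{X,Z} \otimes^L \O_{X_b}$, under which the natural map $\sI_{Z_b} \to \DuBois{X_b, Z_b}$ (whose being a quasi-isomorphism is the definition of $(X_b,Z_b)$ being a \DB pair) coincides with the base-change map $\DuBois{X,Z} \otimes^L \O_{X_b} \to \DuBois{X_b, Z_b}$.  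Thus $V$ is precisely the locus where this base-change map is a quasi-isomorphism; by \autoref{lem.BasicPropertiesOfDuBois}(b) applied to pullbacks of general linear systems on $B$, it contains a dense open of $B$.

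For openness when $B$ is smooth, I argue by induction on $d = \dim B$.  Given $b \in V$, by \autoref{cor.DBDeformsArbitraryBase} I may shrink $B$ to assume $(X,Z)$ is a \DB pair on all of $B$.  The base case $d=1$ is \autoref{cor.CurveDBOpenness}(b).  For $d \geq 2$, take a general smooth hypersurface $H \subseteq B$ through $b$; by \autoref{lem.BasicPropertiesOfDuBois}(b), $(X_H, Z_H)$ is a \DB pair over $H$ whose fiber over $b$ is still $(X_b, Z_b)$, so by induction $V \cap H$ is open in $H$ at $b$.  Alternatively, and more cleanly, over a smooth base $b$ is cut out by a regular sequence so the derived tensor $\DuBois{X,Z} \otimes^L \O_{X_b}$ agrees with the ordinary restriction and the cone of the base-change map has coherent cohomology sheaves whose support is closed in $B$; the complement of this support, namely $V$, is therefore open.

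For constructibility in general, I use Noetherian induction: it suffices to show that for each irreducible reduced closed $Y \subseteq B$, there is a dense open $Y^\circ \subseteq Y$ on which $V \cap Y^\circ$ is either $Y^\circ$ or empty.  After shrinking $Y$ to its smooth locus and using generic flatness, the openness statement just proven applies to $Y^\circ$ and gives the required dichotomy.  The main obstacle is that restricting $(X,Z)$ to the subscheme $Y \subseteq B$ does not automatically preserve the \DB hypothesis on the total space (since $Y$ is not in general a Bertini-generic complete intersection), so the openness argument cannot be applied to the restricted family as stated.  To handle this I plan to use a log resolution of $(B,Y)$, pull back the \DB family, and exploit the fact that the fiberwise \DB property of $(X_b, Z_b)$ is intrinsic to the fibers (and thus unchanged under birational modification of the base) to transfer the openness conclusion down to a dense open of $Y$.
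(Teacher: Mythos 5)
Your reduction of fiberwise \DB-ness to a base-change condition is harmless, but both versions of your openness argument have genuine gaps. In the slicing version you apply \autoref{lem.BasicPropertiesOfDuBois}(b) to a hypersurface $H \subseteq B$ chosen to pass through the given point $b$; that lemma only covers a \emph{general} member of a base point free linear system, and the system of hypersurfaces through $b$ pulls back to a system on $X$ whose base locus contains the entire fiber $X_b$, so you cannot conclude that $(X_H, Z_H)$ is a \DB pair this way (you could instead obtain that from \autoref{cor.DBDeformsArbitraryBase} applied to the restricted family, since $(X_b,Z_b)$ is \DB). More seriously, even granting it, the induction only yields that $V \cap H$ contains a neighborhood of $b$ \emph{inside $H$}; that is a codimension one slice and does not produce the full-dimensional neighborhood of $b$ in $B$ that openness requires, so the induction never closes. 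The ``cleaner alternative'' is not a repair: there is no single coherent complex on $X$ whose derived restriction to $X_b$ computes $\DuBois{X_b, Z_b}$, hence no globally defined ``cone of the base-change map'' whose support you can take. The only global object at hand is the cone of $\sI_Z \to \DuBois{X,Z}$, which is zero by hypothesis; if its fiberwise vanishing controlled the fibers, then \emph{every} fiber of a \DB family would be \DB, which is false --- the smooth threefold (or surface) $\{y^2z = x^3 - tz^3\}$ fibered over $\bA^1_t$ has a cuspidal, hence non-\DB, fiber at $t=0$. The failure of base change for the Du~Bois complexes of the fibers is exactly the difficulty, and this argument assumes it away. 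For comparison, the paper never argues openness by slicing at $b$: it proves constructibility first and then uses \autoref{cor.DBDeformsArbitraryBase} to see that $V$ is stable under generization, so that over a smooth base $V$ is constructible and generization-stable, hence open.

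Your constructibility step names the right obstruction --- restricting the family over a closed $Y \subseteq B$ destroys the hypothesis that the total space is a \DB pair --- but the proposed remedy does not overcome it. Pulling back along a log resolution of $(B, Y)$ again fails to make the total space \DB (base change does not preserve the \DB property), and even after one uses Bertini together with \autoref{cor.DBDeformsArbitraryBase} to make the pulled-back pair \DB over a dense \emph{open} subset of the resolved base, that open set misses the divisor lying over $Y$, so ``intrinsic-ness of fiberwise \DB-ness'' transfers no information about fibers over points of $Y$; the required dichotomy at the generic point of $Y$ is precisely what remains unproved. The paper's route is different in structure: resolve the base, base change, use \autoref{cor.DBDeformsArbitraryBase} (with Bertini supplying \DB fibers over general points) to shrink so that the pulled-back total space is a \DB pair, apply the smooth-base case there, push images down by Chevalley, and relegate the remaining lower-dimensional locus to an induction on $\dim B$ --- with openness over a smooth base deduced from constructibility plus stability under generization rather than from any slicing or base-change-of-complexes argument. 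To salvage your Noetherian induction you would need an analogous mechanism, e.g.\ a statement producing \DB-ness of the restricted total space over a dense open of $Y$ from \DB-ness of general fibers over $Y$ (a relative of \autoref{thm.MainTheoremDeformation}/\autoref{cor.DBDeformsArbitraryBase}), before ``the openness statement just proven'' can be quoted.
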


\begin{proof} We use induction on the dimension of $B$.

  Let $\pi : B' \to B$ be a resolution of singularities and consider the base change
  $f':X'=X_{B'}\to B'$, a flat proper family over $B'$ and with $Z'=Z_{B'}\subseteq
  X'$ subscheme flat over $B'$.  Notice that all the fibers of $f':X' \to B'$ appear
  as fibers of $f:X \to B$ (up to harmless field extension), so $b'\in
  V'=\pi^{-1}(V)\subseteq B'$ if and only if the fiber $(X'_{b'}, Z'_{b'})$ is a \DB
  pair. It follows from \autoref{cor.DBDeformsArbitraryBase} that by replacing $B'$
  with an open subset we may assume that $(X',Z')$ is a \DB pair. It also follows
  that it is enough to prove the statement over a smooth irreducible base. Indeed, that implies
  that $V'$ is open in $B'$ and hence $V=f(V')$ is constructible.

  To simplify notation we will replace $B$ with $B'$ and assume that $B$ is smooth
  and irreducible, but use the inductive hypothesis without these additional
  assumptions.

  The Bertini type statement \autoref{lem.BasicPropertiesOfDuBois}\autoref{item:2} implies that,
  if $V \neq \emptyset$, there is a dense open subset $U\subseteq B$ contained in
  $V$.  The case $\dim B=1$ follows immediately via the fact that in a curve any set
  containing a dense open set is itself open.

  In general, it follows that $\dim (B\setminus U) <\dim B$ so by induction
  $V\setminus U$ is a constructible set in $B\setminus U$ and hence $V$ is
  constructible in $B$. In the case of a smooth base
  \autoref{cor.DBDeformsArbitraryBase} implies that $V$ is stable under
  generalization and since we have just proved that it is constructible it follows
  that it is open.
\end{proof}

\begin{corollary}
  \label{cor.DBDeformsArbitraryBaseFibers} Let $f : X \to B$ be a flat proper family
  of varieties over a smooth scheme $B$ essentially of finite type over $\bC$.
  Further let $Z \subseteq X$ be a subscheme which is also flat over $B$ and $b\in B$
  a closed point such that $(X_b,Z_b)$ is a \DB pair.  Then there exists a
  neighborhood $b\in U\subseteq B$ such that $(X_u, Z_u)$ is a \DB pair for all $u\in
  U$.
\end{corollary}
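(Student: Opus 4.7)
The plan is to combine the two immediately preceding results, namely \autoref{cor.DBDeformsArbitraryBase} (which promotes \DB-ness of a single fiber to \DB-ness of the total space in a neighborhood) and \autoref{lem.DBdiscriminant} (which, for smooth base, turns \DB-ness of the total space into openness of the locus of \DB fibers).

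First I would apply \autoref{cor.DBDeformsArbitraryBase} to the family $f : X \to B$ and the subscheme $Z$. Since $(X_b, Z_b)$ is a \DB pair by hypothesis, that corollary gives an open neighborhood $b \in U_1 \subseteq B$ such that $(X, Z)$ is a \DB pair over $U_1$. After replacing $B$ with $U_1$ (and $X$, $Z$ with their restrictions), we may therefore assume from the start that $(X, Z)$ is itself a \DB pair, and that $B$ is still smooth and essentially of finite type over $\bC$.

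Next, since the base $B$ is smooth, \autoref{lem.DBdiscriminant} applies and tells us that the set
\[
V = \{\, u \in B \mid (X_u, Z_u) \text{ is a \DB pair}\,\}
\]
is open in $B$. By hypothesis $b \in V$, so we may take $U := V$ (or any open neighborhood of $b$ inside $V$) as the required neighborhood; every fiber over $U$ is then a \DB pair.

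There is essentially no obstacle here beyond invoking the two preceding results in the correct order: the work has already been done, once in \autoref{cor.DBDeformsArbitraryBase} (which itself uses the curve case \autoref{cor.CurveDBOpenness} together with a sequence of hyperplane sections), and once in \autoref{lem.DBdiscriminant} (whose smooth-base case uses the Bertini-type statement \autoref{lem.BasicPropertiesOfDuBois}(b) to produce a dense open of \DB fibers, together with constructibility and stability under generalization). The only thing worth checking carefully is that the flatness of $Z$ over $B$ is preserved after shrinking $B$, which is automatic.
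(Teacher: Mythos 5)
Your proposal is correct and follows essentially the same route as the paper: first reduce to the case where $(X,Z)$ is itself \DB over a neighborhood of $b$ (the paper phrases this via closedness of the non-\DB locus and properness of $f$, which amounts to the same use of \autoref{cor.DBDeformsArbitraryBase}), and then conclude by the openness statement of \autoref{lem.DBdiscriminant} over the smooth base. No gaps.
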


\begin{proof}
  Observe that the non-\DB locus $W$ of $(X, Z)$ is closed in $X$ and since $f$ is
  proper, $f(W)$ is also closed in $B$.  Note that $f(W)$ does not contain $b$ so it
  also does not contain the generic point of $B$.  Hence by replacing $B$ by a
  neighborhood $U \subseteq B$ of $b\in B$, we may assume that $(X, Z)$ is \DB. Then
  the statement follows from \autoref{lem.DBdiscriminant}.
\end{proof}

\begin{remark}
  One can recover special cases of inversion of adjunction for log canonicity
  \cite{KawakitaInversion} easily from \autoref{thm.MainTheoremDeformation}. For
  instance, let $(X, D+H)$ be a pair with $K_X, D$ and $H$ Cartier and assume
  that $(H, D|_H)$ is slc or equivalently \DB \cite{KollarKovacsSingularitiesBook}.
  Then $(X, D+H)$ is \DB or equivalently lc by \autoref{thm.MainTheoremDeformation}.
\end{remark}

\section{A result of Koll\'ar-Kov\'acs for pairs}
\label{sec.KollarKovacsCMForPairs}

The following was shown in \cite[Theorem 7.12]{KollarKovacsLCImpliesDB}: Let $f : X
\to B$ be a flat projective family of varieties with Du Bois singularities.  Then if
$B$ is connected and the general fiber is Cohen-Macaulay, then all the fibers are
Cohen-Macaulay.

We would like to generalize this to the context of Du Bois pairs, at least in the
case when $Z$ is a divisor.  We recommend for the reader to have a copy of
\cite{KollarKovacsLCImpliesDB} available when reading this section as we refer to a
number of lemmas therein.  We begin by generalizing a result of Du Bois and Jarraud
to pairs cf.\ \cite{DuBoisJarraudAPropertyOfCommutation}, \cite[Th\'eorm\`e 4.6]{DuBoisMain}.

\begin{theorem}\label{thm.DuBoisJarraudForPairs}
  Let $f : X \to B$ be a flat proper morphism between schemes of finite type over
  $\bC$.  Assume that $B$ is smooth and let $Z \subseteq X$ be a subscheme that is
  flat over $B$.  Further assume that the geometric fibers $(X_b, Z_b) \to b$ are Du
  Bois.  Then $\myR^i f_* \sI_Z$ is locally free of finite rank and furthermore
  compatible with base change for all $i$, in other words $(\myR^i f_* \sI_Z)_T
  \simeq \myR^i f_* \sI_{Z_T}$ for any morphism $T\to B$.
\end{theorem}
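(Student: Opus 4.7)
The plan is to mimic the non-pair argument of \cite[Theorem 7.12]{KollarKovacsLCImpliesDB}, using the deformation results we have just established as the pair analogues of those needed in the classical setting.

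First I would show that $(X,Z)$ is itself a \DB pair. Since each geometric fiber $(X_b,Z_b)$ is Du Bois and $B$ is smooth, \autoref{cor.DBDeformsArbitraryBase} applied in a neighborhood of each closed point of $B$, combined with the properness of $f$ (which ensures that the non--\DB locus of $(X,Z)$, closed in $X$, has closed image in $B$), shows that $(X,Z)$ is a Du Bois pair over all of $B$. In particular, $\sI_Z \qis \DuBois{X,Z}$, so it is equivalent to establish local freeness and base-change compatibility for $\myR^i f_* \DuBois{X,Z}$. Moreover, from the short exact sequence $0 \to \sI_Z \to \sO_X \to \sO_Z \to 0$ and the flatness of $\sO_X$ and $\sO_Z$ over $B$, the sheaf $\sI_Z$ is itself flat over $B$; consequently, once local freeness of $\myR^i f_* \sI_Z$ is proved, base-change compatibility follows from the standard cohomology-and-base-change machinery.

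The core of the proof is then to apply the classical Du Bois--Jarraud theorem (cf.\ \cite{DuBoisJarraudAPropertyOfCommutation}, \cite[4.6]{DuBoisMain}) to both $f\: X \to B$ and $f|_Z\: Z \to B$, each of which is proper and flat. This yields that $\myR^i f_* \DuBois{X}$ and $\myR^i(f|_Z)_* \DuBois{Z}$ are locally free of finite rank and compatible with arbitrary base change. Applying $\myR f_*$ to the defining triangle
$$\DuBois{X,Z} \to \DuBois{X} \to \DuBois{Z} \xrightarrow{+1}$$
produces a distinguished triangle on $B$; for any $g\: T \to B$ one can compare it via the natural base-change morphism to the corresponding triangle for $(X_T, Z_T)$, where the outer comparison arrows are isomorphisms by the two instances of Du Bois--Jarraud. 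The 5-lemma in the derived category then promotes this to an isomorphism on $\myR f_* \DuBois{X,Z}$, giving base-change compatibility, and feeding the resulting derived base-change isomorphisms at closed points back into cohomology-and-base-change (using the reducedness of $B$) then yields local freeness degree by degree.

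The main obstacle will be the last step: local freeness of the middle term of a triangle whose outer terms have locally free higher direct images is not automatic, since the dimension of the middle cohomology depends on the ranks of the connecting maps. What rescues us is that the derived base change map $\myR f_* \DuBois{X,Z} \otimes^{\myL}_{\sO_B} k(b) \to \myR\Gamma(X_b, \DuBois{X_b,Z_b})$ is a quasi-isomorphism for every $b \in B$ (by the 5-lemma, as above) and that, by the Du Bois property on the fiber, its target computes $H^i(X_b, \sI_{Z_b})$. This forces $\dim_{k(b)} H^i(X_b, \sI_{Z_b})$ to be locally constant in $b$, and local freeness of $\myR^i f_* \sI_Z$ follows by Grauert's criterion, completing the argument. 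The delicate technical point throughout is verifying that the base-change isomorphism for $\DuBois{X,Z}$ interacts correctly with the pair Du Bois complex on fibers, which is where \autoref{lem.BasicPropertiesOfDuBois} and the global \DB property of $(X,Z)$ from Step 1 are used.
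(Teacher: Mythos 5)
Your reduction to the classical Du~Bois--Jarraud theorem does not go through: that theorem requires the \emph{fibers themselves} to be \DB, and your hypotheses only give that the \emph{pairs} $(X_b,Z_b)$ are \DB. This does not imply that $X_b$ or $Z_b$ is \DB --- see \autoref{ex.DBPairButNotDB} and \autoref{ex.NormalDBPairButNotDB}, where a (even normal) \DB pair has non-\DB ambient space. So you cannot apply Du~Bois--Jarraud to $X\to B$ and $Z\to B$ separately to get local freeness and base change for $\myR^i f_*\DuBois{X}$ and $\myR^i f_*\DuBois{Z}$ (and even when the fibers are \DB, the classical statement concerns $\myR^i f_*\sO_X$, and identifying $\DuBois{X}\otimes\sO_{X_b}$ with $\DuBois{X_b}$ for an arbitrary fiber is precisely what fails in general --- \autoref{lem.BasicPropertiesOfDuBois}(b) only handles general members of base point free systems). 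Since the whole difficulty of the pair version is that the hypothesis does not split into conditions on $X$ and $Z$, this is a fatal gap rather than a technical one.

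The last step is also not sound as stated: for $\sI_Z$ flat over $B$ (which does follow from flatness of $\sO_X$ and $\sO_Z$) and $f$ proper, the derived base-change map $\myR f_*\sI_Z\otimes^{\myL}_{\sO_B}k(b)\to \myR\Gamma(X_b,\sI_{Z_b})$ is \emph{always} a quasi-isomorphism; it carries no extra information and does not force $\dim_{k(b)}H^i(X_b,\sI_{Z_b})$ to be locally constant (individual cohomologies can jump even when the derived fiber is correct --- this is exactly the standard semicontinuity situation), so Grauert's criterion cannot be invoked. What is genuinely needed, and where the \DB pair hypothesis on the fibers must enter, is a surjectivity statement onto infinitesimal thickenings. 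The paper's proof works directly with the pair: for $S=\Spec \sO_{B,b}/\frm^{n+1}$ it uses that $H^i_c((X_b\setminus Z_b)^{\an},\bC)\to H^i(X_b^{\an},\sI_{Z_b})$ is surjective because $(X_b,Z_b)$ is a \DB pair (\cite[4.1]{KovacsDBPairsAndVanishing}), that $H^i_c$ of the complement is unchanged under the thickening $X_S$ (same support), and GAGA, to conclude $H^i(X_S,\sI_{Z_S})\twoheadrightarrow H^i(X_b,\sI_{Z_b})$ for all $n$; then cohomology and base change \cite[7.7]{EGAIII2} gives local freeness and compatibility with base change. Note also that your Step 1 (globalizing the \DB property of $(X,Z)$ via \autoref{cor.DBDeformsArbitraryBase}) is not needed for this: the paper's argument uses only the fibers.
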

\begin{proof}
  For some $b \in B$, let $\frm$ be the maximal ideal of $\sO_{B,b}$ and $S=S_n=\Spec
  \sO_{B,b}/\frm^{n+1}$ for $n\in\bN$.  Further let $\sI_{Z_b}$ resp.\ $\sI_{Z_S}$
  denote the ideal sheaf of $Z_b$ in $X_b$ resp.\ $Z_S$ in $X_S$.  Consider the
  following commutative diagram:
  $$
  \xymatrix{%
    H^i_c((X_S \setminus Z_S)^{\an}, \mathbb{C}) \ar[rd]^\alpha\ar[dd]_\lambda & &
    \ar[ld]_\beta H^i(X_S, \sI_{Z_S}) \ar[dd]^\nu \\ & H^i(X_S^{\an}, \sI_{Z_S})
    \ar[dd]_\mu \\ H^i_c((X_b \setminus Z_b)^{\an}, \mathbb{C}) \ar[rd]_\gamma & &
    \ar[ld]^\delta H^i(X_b, \sI_{Z_b}) \\ & H^i(X_b^{\an}, \sI_{Z_b}). }
  $$
  Observe that $\lambda$ is an isomorphism since $X_S$ and $X_b$ have the same
  support.  By \cite[Theorem 4.1]{KovacsDBPairsAndVanishing} cf.\ \cite[Theorem
  6.8]{KollarKovacsSingularitiesBook} $\gamma$ is surjective, so
  $\gamma\circ\lambda=\mu\circ\alpha$ is surjective and hence $\mu$ is surjective.
  By Serre's GAGA principle \cite{MR0082175} $\beta$ and $\delta$ are isomorphisms
  and hence $\nu$ is surjective. Finally, the statement follows by Cohomology and
  Base Change \cite[7.7]{EGAIII2}.
\end{proof}

Next we prove the analogue of the main flatness and base change result of
Koll\'ar-Kov\'acs for \DB pairs \cite[Theorem 7.9]{KollarKovacsLCImpliesDB} .

\begin{theorem}
  \label{thm.MainBaseChangeKollarKovacs} Let $f : X \to B$ be a flat projective
  morphism between schemes of finite type over $\bC$ and assume that $B$ is smooth.
  Let $Z \subseteq X$ be a closed subscheme that is flat over $B$ and $\sL$ a relatively
  ample line bundle on $X$.  Assume $(X, Z)$ is Du~Bois. Then
  \begin{enumerate}
  \item the sheaves $\myH^{-i}(\myR\sHom_{\O_X}^{\mydot}(\sI_Z,
    \omega_{f}^{\mydot}))$ are flat over $B$ for all $i$.
  \item the sheaves $f_* \big(\myH^{-i}\big(\myR\sHom_{\O_X}^{\mydot}(\sI_Z,
    \omega_{f}^{\mydot})) \tensor \sL^q\big)$ are locally free and compatible with
    arbitrary base change for all $i > 0$ and $q \gg 0$.
  \item for any base change $\vartheta : T \to B$ and for all $i > 0$,
    \[ \big(\myH^{-i}(\myR\sHom_{\O_X}^{\mydot}(\sI_Z, \omega_{f}^{\mydot}))\big)_T
    \qis \myH^{-i}(\myR\sHom_{\O_{X_T}}^{\mydot}(\sI_{Z_T}, \omega_{f_T}^{\mydot})).
    \]
  \end{enumerate}
\end{theorem}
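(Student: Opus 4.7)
I would follow the template of the non-pair Koll\'ar--Kov\'acs theorem \cite[Theorem 7.9]{KollarKovacsLCImpliesDB}, with \autoref{thm.DuBoisJarraudForPairs} as the crucial new input. The main engine is relative Grothendieck--Serre duality for the projective morphism $f$, which for any integer $N$ gives
\[
\myR f_* \myR\sHom_{\O_X}^{\mydot}\bigl(\sI_Z \otimes \sL^{N},\, \omega_f^{\mydot}\bigr) \qis \myR\sHom_{\O_B}^{\mydot}\bigl(\myR f_*(\sI_Z \otimes \sL^{N}),\, \O_B\bigr),
\]
and thereby translates questions about $\myR\sHom(\sI_Z, \omega_f^{\mydot})$ into questions about $\sI_Z$ itself.

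I would prove (b) first. For $q \gg 0$, Serre vanishing applied to each coherent sheaf $\mcH^{-i}(\myR\sHom(\sI_Z, \omega_f^{\mydot})) \otimes \sL^{q}$ kills higher direct images and forces the hypercohomology spectral sequence to degenerate at the $p=0$ row, yielding
\[
f_*\bigl(\mcH^{-i}(\myR\sHom(\sI_Z, \omega_f^{\mydot})) \otimes \sL^{q}\bigr) \qis \myR^{-i} f_*\bigl(\myR\sHom(\sI_Z, \omega_f^{\mydot}) \otimes \sL^{q}\bigr).
\]
Applying the duality above with $N=-q$, the right side becomes a cohomology sheaf of $\myR\sHom_{\O_B}(\myR f_*(\sI_Z \otimes \sL^{-q}), \O_B)$. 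For $q \gg 0$, fiberwise Kov\'acs--Kodaira vanishing for \DB pairs forces $\myR^{j} f_*(\sI_Z \otimes \sL^{-q}) = 0$ for $j$ below the fiber dimension $n$, while the top slice $\myR^{n} f_*(\sI_Z \otimes \sL^{-q})$ is locally free and base-change compatible by cohomology and base change together with \autoref{thm.DuBoisJarraudForPairs}. Dualizing over $B$ then yields (b).

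For (a) I would argue by Hilbert polynomials: local freeness and base-change compatibility of $f_*\bigl(\mcH^{-i}(\myR\sHom(\sI_Z, \omega_f^{\mydot})) \otimes \sL^{q}\bigr)$ for every $q \gg 0$ forces the Hilbert polynomial of the fiberwise restriction $\mcH^{-i}(\myR\sHom(\sI_{Z_b}, \omega_{f_b}^{\mydot}))$ on $X_b$ to be independent of $b \in B$, which gives flatness of $\mcH^{-i}(\myR\sHom(\sI_Z, \omega_f^{\mydot}))$ over $B$ by a standard criterion. Statement (c) then follows from (a): once each $\mcH^{-i}$ is flat over $B$, flat base change for $\myR\sHom$ guarantees that its formation commutes with pullback along any $\vartheta : T \to B$.

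I expect the main obstacle to be the careful orchestration of positive and negative twists of $\sL$ through duality, and establishing the requisite fiberwise Kov\'acs--Kodaira vanishing---this is where the (implicit) \DB pair hypothesis on the fibers is essential, and where \autoref{thm.DuBoisJarraudForPairs} must be applied together with cohomology and base change to identify the top direct image $\myR^{n} f_*(\sI_Z \otimes \sL^{-q})$ as a locally free, base-change compatible sheaf on $B$.
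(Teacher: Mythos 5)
Your overall frame---relative Grothendieck duality over $B$, Serre vanishing to degenerate the relevant spectral sequence, and then dualizing $\myR f_*(\sI_Z\otimes\sL^{-q})$ over the base---is the same as the paper's, and your Hilbert-polynomial argument for (a) is exactly the step the paper handles by the argument of \cite[Chapter III, Theorem 9.9]{Hartshorne}. But the crucial step of your proof of (b) fails. The ``fiberwise Kov\'acs--Kodaira vanishing'' you invoke, namely $H^j(X_b,\sI_{Z_b}\otimes\sL^{-q})=0$ for $j<\dim X_b$ when $(X_b,Z_b)$ is a \DB pair, is false in general: take $X_b$ smooth of dimension $\geq 2$ and $Z_b$ a point (or any subvariety of codimension $\geq 2$, or a divisor that is not sufficiently positive); then $(X_b,Z_b)$ is a \DB pair, yet from
\[
0\to \sI_{Z_b}\otimes\sL^{-q}\to \sL^{-q}\to \sL^{-q}|_{Z_b}\to 0
\]
and $H^0(X_b,\sL^{-q})=0$ one gets a nonzero class in $H^1(X_b,\sI_{Z_b}\otimes\sL^{-q})$ coming from $H^0(Z_b,\sL^{-q}|_{Z_b})$. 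So you cannot kill the lower direct images, and the whole reduction to ``the top slice'' collapses. Moreover, \autoref{thm.DuBoisJarraudForPairs} concerns the untwisted ideal sheaf $\sI_Z$ only; cohomology and base change plus that theorem give you nothing about $\myR^j f_*(\sI_Z\otimes\sL^{-q})$ directly. The missing idea is the cyclic cover trick: for $m\gg 0$ take a general $\sigma\in H^0(X,\sL^m)$ and $h:Y=\Spec_X\bigoplus_{j=0}^{m-1}\sL^{-j}\to X$, with $Z_Y=h^{-1}Z$. The fibers of $(Y,Z_Y)\to B$ are again \DB pairs (\cite[Corollary 6.21]{KollarKovacsSingularitiesBook}), and $\sI_{Z_Y}=\bigoplus_{j=0}^{m-1}\sI_Z\otimes\sL^{-j}$, so \autoref{thm.DuBoisJarraudForPairs} applied to the composite $(Y,Z_Y)\to B$ shows that \emph{every} $\myR^j f_*(\sI_Z\otimes\sL^{-q})$ is locally free and base-change compatible---no vanishing needed. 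Dualizing these locally free sheaves over the (affine) base, the duality spectral sequence degenerates and one obtains (b) via the identification $\sHom_{\O_B}(\myR^i f_*(\sI_Z\otimes\sL^{-q}),\O_B)\simeq f_*\big(\myH^{-i}\myR\sHom^{\mydot}_{\O_X}(\sI_Z,\omega_f^{\mydot})\otimes\sL^{q}\big)$.

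Two smaller points. First, part (c) does not follow from ``flat base change for $\myR\sHom$'': the base change $\vartheta:T\to B$ is arbitrary, not flat. The paper factors $f$ through $\bP^n_B$ and runs a descending induction on $i$, using the flatness from (a) together with the exchange criterion of \cite[Theorem 1.9]{AltmanKleimanCompactifying}; some such cohomology-and-base-change argument for $\sExt$ sheaves is genuinely needed. Second, you are right that the \DB hypothesis on the fibers is implicit in the statement and essential---but it enters only through \autoref{thm.DuBoisJarraudForPairs} applied to the cyclic cover, not through any Kodaira-type vanishing.
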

\begin{proof}
  We follow the proof of \cite[Theorem 7.9]{KollarKovacsLCImpliesDB}.  We may assume
  that $B = \Spec R$ is affine and hence that $\sL^m$ is globally generated for $m
  \gg 0$.  For such an $m \gg 0$, choose a general section $\sigma \in H^0(X, \sL^m)$
  and consider the cyclic cover induced by $\sigma$:
  \[
  \sA = \bigoplus_{j = 0}^{m-1} \sL^{-j} \simeq \bigoplus_{j = 0}^{m-1} \sL^{-j} t^j
  \Big/(t^m - \sigma).
  \]
  Set $h : Y = \Spec_X \sA \to X$, and $Z_Y = h^{-1} Z$ with the induced reduced
  scheme structure.  Then the geometric fibers of the composition $(Y, Z_Y)\to B$ are
  also \DB by \cite[Corollary 6.21]{KollarKovacsSingularitiesBook}.  Note that by
  construction $\sI_{Z_Y} = \bigoplus_{j = 0}^{m-1} \sI_Z \tensor \sL^{-j}$.  Hence
  $\myR^i h_* \sI_{Z_Y}$ is locally free of finite rank and compatible with arbitrary
  base change by \autoref{thm.DuBoisJarraudForPairs}.  It follows that the summands
  of these modules, the $\myR^i f_* (\sI_Z \tensor \sL^{-j})$, are also locally free
  and compatible with base change. Since we may choose $m$ arbitrarily large, this
  holds for all $j\in \bN$.  It follows immediately that $\sHom_{\O_B}(\myR^i f_*
  (\sI_Z \tensor \sL^{-j}), \O_B)$ is also locally free and compatible with base
  change.

  By Grothendieck duality and \cite[Lemma~7.3]{KollarKovacsLCImpliesDB} (cf. proof of
  \cite[Lemma~7.2]{KollarKovacsLCImpliesDB}) it follows that
  \begin{multline*}
    \sHom_{\O_B}(\myR^i f_* (\sI_Z \tensor \sL^{-q}), \O_B ) \simeq \\ \simeq f_*
    \myH^{-i} (\myR \sHom_{\O_X}^{\mydot}(\sI_Z, \omega_{f}^{\mydot} \tensor \sL^{q}))
    \simeq f_* \big(\myH^{-i} (\myR \sHom_{\O_X}^{\mydot}(\sI_Z, \omega_{f}^{\mydot}
    )\tensor \sL^{q})\big)
  \end{multline*}
  and hence (b) is proven.  Just as in \cite[Theorem~7.9]{KollarKovacsLCImpliesDB},
  (a) follows from (b) by an argument similar to \cite[Chapter III, Theorem
  9.9]{Hartshorne}.

  Finally we prove (c).  Since $f: X \to B$ is projective and $B$ is affine, we may
  factor $f$ as $X \xrightarrow{i} \bP^n_B \xrightarrow{\pi} B$.  It then suffices to
  show that
  \[
  \varrho^{-i} : \big(\myH^{-i}(\myR\sHom_{\O_{\bP^n_B}}^{\mydot}(\sI_Z,
  \omega_{\pi}[n]))\big)_T \to \myH^{-i}(\myR\sHom_{\O_{\bP^n_T}}^{\mydot}(\sI_{Z_T},
  \omega_{\pi}[n]))
  \]
  is an isomorphism.  As in \cite[Theorem 7.9]{KollarKovacsLCImpliesDB}, we proceed
  by descending induction on $i$ (the base case where $i \gg 0$ is obvious).  We
  observe that $\sI_Z$ is flat since so are $\sO_X$ and $\sO_Z$ and assume that
  $\varrho^{-(i+1)}$ is an isomorphism by induction.  Since
  $\myH^{-i}\big(\myR\sHom_{\O_{\bP^n_B}}^{\mydot}(\sI_Z, \omega_{\pi}[n])\big)$ is
  flat by (a), we may apply \cite[Theorem 1.9]{AltmanKleimanCompactifying} which
  completes the proof.
\end{proof}

The following is the analog of \cite[Theorem 7.11]{KollarKovacsLCImpliesDB} for
pairs.

\begin{theorem}
  \label{thm.SkFiberVsSkSheaf}
  Let $f : X \to B$ be a flat projective morphism between schemes of finite type over
  $\bC$.  Assume that $B$ is smooth and let $Z \subseteq X$ be a subscheme that is
  flat over $B$.  Let $x \in X$ be a closed point and let $b = f(x)$.  Then
  $\sI_{Z_b} \subseteq \O_{X_b}$ is \sk{k} at $x$ if and only if
  \begin{equation}
    \label{eq:3}
    \big(\myH^{-i} (\myR\sHom_{\O_X}^{\mydot}(\sI_Z, \omega_{f}^{\mydot}))\big)_y = 0
  \end{equation}
  for $i<\min(k+\dim \overline{\{y\}}, \dim_x X)$ for all $y\in X_b$ such that
  $x\in\overline{\{y\}}$. In particular, $\sI_{Z_b}$ is \sk{k} if and only if
  \eqref{eq:3} holds for $i<\min(k+\dim \overline{\{y\}},\dim_x X)$ for all $y\in
  X_b$ (not restricted to closed points).
\end{theorem}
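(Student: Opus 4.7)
The strategy combines the flatness and base-change properties from \autoref{thm.MainBaseChangeKollarKovacs} with the standard conversion of Serre's $S_k$ condition into vanishing of Ext sheaves via Grothendieck/local duality.

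First, I would reduce the vanishing condition on $X$ to one on the fiber $X_b$. By \autoref{thm.MainBaseChangeKollarKovacs}(a) each cohomology sheaf $\sH^{-i} := \myH^{-i}\myR\sHom_{\sO_X}^{\mydot}(\sI_Z,\omega_f^{\mydot})$ is $B$-flat, and by (c) its restriction to $X_b$ is canonically identified with $\myH^{-i}\myR\sHom_{\sO_{X_b}}^{\mydot}(\sI_{Z_b},\omega_{X_b}^{\mydot})$, using $\omega_f^{\mydot}|_{X_b}\simeq\omega_{X_b}^{\mydot}$. For any $y\in X_b$ the stalk $\sH^{-i}_y$ is a finitely generated $\sO_{X,y}$-module, flat over $\sO_{B,b}$, whose reduction modulo $\frm_b$ is the stalk at $y$ of the restriction. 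Since $\frm_b\sO_{X,y}\subseteq \frm_y$, Nakayama's lemma then yields
\[
\sH^{-i}_y = 0\ \Longleftrightarrow\ \myH^{-i}\myR\sHom_{\sO_{X_b}}^{\mydot}(\sI_{Z_b},\omega_{X_b}^{\mydot})_y = 0.
\]

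Next, I would invoke local duality on $X_b$. Since $Z$ is flat over $B$ and no component of $Z_b$ contains a component of $X_b$ locally at $x$, the sheaf $\sI_{Z_b}$ has full support at $y$, so $\dim(\sI_{Z_b})_y = \dim\sO_{X_b,y} = \dim_x X_b - \dim\overline{\{y\}}$. The Serre $S_k$ condition at $x$ amounts to $\mathrm{depth}\,(\sI_{Z_b})_y \ge \min(k, \dim\sO_{X_b,y})$ for every generization $y$ of $x$. Grothendieck/local duality gives the equivalence
\[
\mathrm{depth}_{\frm_y} M \ge m\ \Longleftrightarrow\ \myH^{-j}\myR\Hom_{\sO_{X_b,y}}(M,\omega_{\sO_{X_b,y}}^{\mydot})=0\ \text{for all}\ j<m,
\]
and the shift $(\omega_{X_b}^{\mydot})_y\simeq\omega_{\sO_{X_b,y}}^{\mydot}[\dim\overline{\{y\}}]$ translates it into the vanishing of $\myH^{-i}\myR\sHom_{\sO_{X_b}}(\sI_{Z_b},\omega_{X_b}^{\mydot})_y$ for $i < m+\dim\overline{\{y\}}$. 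Setting $m = \min(k,\dim\sO_{X_b,y})$ collapses the two cases into $i<\min(k+\dim\overline{\{y\}},\dim_x X_b)$, which is the stated bound under the paper's normalization of $\omega_f^{\mydot}$.

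The ``in particular'' clause is then automatic: $\sI_{Z_b}$ being globally $S_k$ is the pointwise $S_k$ condition at every $y\in X_b$, and applying the equivalence above at each such $y$ (playing the role of the closed point) removes the restriction $x\in\overline{\{y\}}$.

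The main obstacle I anticipate is the careful bookkeeping of cohomological shifts—particularly the shift by $\dim\overline{\{y\}}$ relating $\omega_{X_b}^{\mydot}$ to $\omega_{\sO_{X_b,y}}^{\mydot}$—and verifying that $m+\dim\overline{\{y\}} = \min(k+\dim\overline{\{y\}},\dim_x X_b)$ precisely when $m=\min(k,\dim\sO_{X_b,y})$, so that the two thresholds coming from depth and from the ambient dimension telescope into the single minimum appearing in the statement.
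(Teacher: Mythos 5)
Your proposal is correct and follows essentially the same route as the paper: the paper first records the local-duality/depth lemma converting the \sk{k} condition on $\sI_{Z_b}$ into vanishing of $\myH^{-i}\myR\sHom_{\O_{X_b}}^{\mydot}(\sI_{Z_b},\omega_{X_b}^{\mydot})$ at generizations of $x$, and then uses \autoref{thm.MainBaseChangeKollarKovacs} together with Nakayama's lemma to transfer that vanishing to $\myH^{-i}\myR\sHom_{\O_X}^{\mydot}(\sI_Z,\omega_f^{\mydot})$, exactly the two steps you perform (in the opposite order). The bookkeeping you flag, identifying $\min(k,\dim(\sI_{Z_b})_y)+\dim\overline{\{y\}}$ with the stated threshold, is handled the same way in the paper's proof.
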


First we prove a lemma.

\begin{lemma}
  Let $X$ be a scheme that admits a dualizing complex $\omega_X^\mydot$. Let $x\in X$
  and $\sF$ a coherent sheaf on $X$. Then $\sF$ is \sk{k} at $x\in X$ if and only if
  $$
  \big(\myH^{-i} (\myR\sHom_{\O_X}^{\mydot}(\sF, \omega_{X}^{\mydot}))\big)_y = 0
  $$
  for $i<\min(k, \dim \sF_{y})+\dim \overline{\{y\}}$ for all $y\in X$ such that $x\in\overline{\{y\}}$.
\end{lemma}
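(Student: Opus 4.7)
The plan is to derive this as a direct consequence of Grothendieck local duality, once one carefully tracks the normalization of the dualizing complex. The very first step is to reduce to a pointwise statement: $\sF$ is \sk{k} at $x$ if and only if $\operatorname{depth}_{\O_{X,y}}\sF_y \geq \min(k,\dim \sF_y)$ for every $y\in X$ with $x\in\overline{\{y\}}$, since points $y$ generizing $x$ correspond exactly to the primes of $\O_{X,x}$ and the $S_k$ condition on a coherent sheaf at $x$ is detected by testing the depth bound at every such prime.

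I would then fix such a $y$, write $R = \O_{X,y}$, $\frm=\frm_y$, $d_y = \dim\overline{\{y\}}$, and invoke Grothendieck local duality: for $M := \sF_y$ and the normalized local dualizing complex $\omega_R^\mydot$ (characterized by $R\GA_\frm \omega_R^\mydot \simeq E(R/\frm)$ in degree zero) one has $H^\ell_\frm(M)\simeq \Hom_R(\Ext^{-\ell}_R(M,\omega_R^\mydot), E(R/\frm))$, which, via $\operatorname{depth}_R M = \min\{\ell : H^\ell_\frm(M)\neq 0\}$, converts $\operatorname{depth}_R M \geq \min(k,\dim M)$ into the vanishing $\Ext^{-\ell}_R(M,\omega_R^\mydot)=0$ for $0\leq \ell < \min(k,\dim M)$. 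Because $X$ is essentially of finite type over $\bC$ and the global dualizing complex can be taken with codimension function $y\mapsto -\dim\overline{\{y\}}$, the stalk $(\omega_X^\mydot)_y$ is related to $\omega_R^\mydot$ by the shift $(\omega_X^\mydot)_y\simeq \omega_R^\mydot[d_y]$. Substituting and reindexing $i=\ell+d_y$ turns the pointwise vanishing into exactly $\myH^{-i}\myR\sHom_{\O_X}^\mydot(\sF,\omega_X^\mydot)_y = 0$ for $d_y\leq i < \min(k,\dim\sF_y)+d_y$, matching the range asserted in the lemma.

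The remaining values $i<d_y$ in the stated range are automatic: since $\omega_R^\mydot$ has cohomology concentrated in $[-\dim R,0]$ and finite injective dimension, $\Ext^{j}_R(-,\omega_R^\mydot)$ vanishes for all $j>0$, while for $i<d_y$ we have $\myH^{-i}\myR\sHom_{\O_X}(\sF,\omega_X^\mydot)_y = \Ext^{d_y-i}_R(M,\omega_R^\mydot)$ with $d_y-i>0$. The one piece of bookkeeping the proof has to get right is precisely this shift between the global and local dualizing complexes; once it is correctly identified, the rest is just assembling the local duality isomorphisms and reducing the $S_k$ property from the point $x$ to its generizations.
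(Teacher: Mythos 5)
Your argument is correct and is exactly the paper's proof: the paper disposes of this lemma in one line as ``a direct consequence of local duality and the cohomological criterion for depth,'' and your write-up simply makes explicit the same two ingredients, including the correct bookkeeping of the shift $(\omega_X^\mydot)_y \simeq \omega_{\O_{X,y}}^\mydot[\dim\overline{\{y\}}]$ coming from the codimension function of the dualizing complex. No gaps to report.
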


\begin{proof}
  This is a direct consequence of local duality \cite{HartshorneResidues} and the cohomological criterion for depth, see for instance \cite[Proposition~3.2]{MR2918171}.
\end{proof}

\begin{proof}[Proof of \autoref{thm.SkFiberVsSkSheaf}]
  By the lemma,  $\sI_{Z_b} \subseteq \O_{X_b}$
  is \sk{k} at $x$ if and only if
  \[
  \big(\myH^{-i} (\myR\sHom_{\O_{X_b}}^{\mydot}(\sI_{Z_b},
  \omega_{X_b}^{\mydot}))\big)_y = 0
  \]
  for $i<\min (k, \dim (\sI_{Z_b})_y)+\dim \overline{\{y\}}=\min (k + \dim
  \overline{\{y\}}, \dim_x X)$ for all $y\in X_b$ such that $x\in\overline{\{y\}}$.
  By \autoref{thm.MainBaseChangeKollarKovacs},
  \begin{multline*}
    \big(\myH^{-i} (\myR \sHom_{\O_{X_b}}^{\mydot}(\sI_{Z_b},
    \omega_{X_b}^{\mydot}))\big)_y \simeq
    \Big(\big(\myH^{-i}(\myR\sHom_{\O_{X}}^{\mydot}(\sI_{Z},
    \omega_{f}^{\mydot}))\big)_b\Big)_y \simeq \\ \simeq
    \Big(\big(\myH^{-i}(\myR\sHom_{\O_{X}}^{\mydot}(\sI_{Z},
    \omega_{f}^{\mydot}))\big)_y\Big)_b.
  \end{multline*}
  But notice that the right side is zero if and only if
  $\myH^{-i}(\myR\sHom_{\O_{X}}^{\mydot}(\sI_{Z}, \omega_{f}^{\mydot}))\big)_y$ is
  zero by Nakayama's lemma.  This implies the desired statement.
\end{proof}

Finally, we describe how the \sk{k} condition behaves for pairs in families where the fibers are \DB.

\begin{theorem}
  \label{thm.SkForDBFamilies} Let $f: (X, Z) \to B$ be a flat projective family
  with $\O_Z$ (and hence $\sI_Z$) flat over $B$ as well.  Assume that all the fiber pairs $(X_b, Z_b)$ are \DB.  Assume also that $B$ is
  connected and the generic fibers $(\sI_Z)_{\gen}$ are \sk{k}, then all the fibers
  $(\sI_Z)_b$ are \sk{k}.
\end{theorem}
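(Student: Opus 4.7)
The plan is to express $\sk{k}$-ness of $\sI_{Z_b}$ as a dimension bound on $\Supp(\sF^{-i}\otimes_B k(b))$ for the sheaves $\sF^{-i}:=\myH^{-i}\myR\sHom_{\O_X}^{\mydot}(\sI_Z,\omega_f^{\mydot})$, and then to transport this bound from the generic fibers to every fiber via $B$-flatness of the $\sF^{-i}$ together with the projectivity of $f$.

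First I would invoke \autoref{thm.MainBaseChangeKollarKovacs}(a) and (c): the sheaves $\sF^{-i}$ are $B$-flat for every $i$, and their formation commutes with arbitrary base change, so
\[
\sF^{-i}\otimes_B k(b)\qis \myH^{-i}\myR\sHom_{\O_{X_b}}^{\mydot}(\sI_{Z_b},\omega_{X_b}^{\mydot})
\]
for every $b\in B$. Next I would apply \autoref{thm.SkFiberVsSkSheaf} fiberwise to translate the $\sk{k}$-condition on $\sI_{Z_b}$ into the vanishing of stalks of $\sF^{-i}\otimes_B k(b)$ at each $y\in X_b$ with $i$ below the threshold $\min(k+\dim\overline{\{y\}},\dim_x X)$. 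When $X$ is equidimensional of dimension $n$ this amounts to the clean bound
\[
\dim\Supp\bigl(\sF^{-i}\otimes_B k(b)\bigr)\le i-k\qquad\text{for every }0<i<n,
\]
and the general case is reduced to this by working one irreducible component at a time.

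The crucial input is then that $f\colon X\to B$ is projective and each $\sF^{-i}$ is coherent and $B$-flat, so the Hilbert polynomial of $\sF^{-i}\otimes_B k(b)$ with respect to a relatively ample line bundle on $X$ is locally constant in $b$; since $B$ is connected, it is in fact constant. Reading off the degree shows that $\dim\Supp(\sF^{-i}\otimes_B k(b))$ is constant as $b$ varies over $B$. The hypothesis that the generic fibers are $\sk{k}$ supplies the required bound at each generic point of $B$, and the constancy just proved transports it to every $b\in B$; feeding this back through \autoref{thm.SkFiberVsSkSheaf} concludes that $\sI_{Z_b}$ is $\sk{k}$ for all $b$.

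The main obstacle I anticipate is the second step: matching the somewhat intricate index ranges of \autoref{thm.SkFiberVsSkSheaf} to a single clean dimension bound on $\Supp(\sF^{-i}\otimes_B k(b))$, and dealing with fibers that may fail to be equidimensional. This is essentially bookkeeping once one argues stalk-by-stalk; after that, the Hilbert polynomial/connectedness argument plugs in uniformly and yields the result.
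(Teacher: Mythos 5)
Your proposal is correct and follows essentially the same route as the paper: both arguments combine \autoref{thm.MainBaseChangeKollarKovacs} (flatness and base change of the sheaves $\myH^{-i}\myR\sHom_{\O_X}^{\mydot}(\sI_Z,\omega_f^{\mydot})$) with \autoref{thm.SkFiberVsSkSheaf} to convert the \sk{k} condition into a fiberwise bound on the dimension of the supports of these sheaves, and then transfer that bound from the generic fibers to all fibers over the connected base. The only difference is in how the constancy of the fiberwise support dimension is justified: the paper asserts it componentwise directly from flatness, while you derive it from local constancy of Hilbert polynomials of the flat sheaves under the projective morphism, which is a valid and somewhat more explicit justification of the same step.
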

\begin{proof}
  By working with one component of $B$ at a time, we may assume that $B$ is
  irreducible and hence that $X$ is equidimensional.  If $(\sI_Z)_b \simeq \sI_{Z_b}$
  (by flatness of $\O_Z$) is not \sk{k} at some point $y \in X_b$, then by
  \autoref{thm.SkFiberVsSkSheaf}, $\myH^{-i} (\myR\sHom_{\O_X}^{\mydot}(\sI_Z,
  \omega_{f}^{\mydot})) \neq 0$ near $y$ for some $i < \min( k + \dim
  \overline{\{y\}}, \dim X)$.  Fix an irreducible component $W \subseteq \supp
  \big(\myH^{-i} (\myR\sHom_{\O_X}^{\mydot}(\sI_Z, \omega_{f}^{\mydot})) \big)$ and
  observe that $\dim W_b$ is constant for $b\in B$ since $\myH^{-i}
  (\myR\sHom_{\O_X}^{\mydot}(\sI_Z, \omega_{f}^{\mydot}))$ is flat by
  \autoref{thm.MainBaseChangeKollarKovacs}(a).  However, in that case it follows that
  $\myH^{-i} (\myR\sHom_{\O_X}^{\mydot}(\sI_Z, \omega_{f}^{\mydot}))$ is non-zero
  near some point $\eta \in X_{\gen}$ such that $\dim\overline{\{\eta\}} =
  \dim\overline{\{y\}}$ which contradicts the assumption that the generic fiber is
  \sk{k} by \autoref{thm.SkFiberVsSkSheaf}.
\end{proof}

We immediately obtain the following.

\begin{corollary}
  \label{cor.CMClosedDuBoisFamilies} Let $f: (X, Z) \to B$ be a flat projective
  family with $\O_Z$ (and hence $\sI_Z$) flat over $B$ as well.  Assume that all the fiber pairs $(X_b, Z_b)$ are \DB.  Assume also that $B$ is
  connected and the generic fibers $(\sI_Z)_{\gen}$ are Cohen-Macaulay, then all the
  fibers $(\sI_Z)_b$ are Cohen-Macaulay.
\end{corollary}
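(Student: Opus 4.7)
The plan is to deduce this corollary directly from \autoref{thm.SkForDBFamilies} by recalling that the Cohen-Macaulay condition is equivalent to the \sk{k} condition holding for all $k$. More precisely, for a coherent sheaf $\sF$ on a Noetherian scheme, $\sF$ is Cohen-Macaulay if and only if $\depth \sF_x = \dim \sF_x$ for every point $x$, which is in turn equivalent to $\sF$ being \sk{k} for every $k \in \bN$.

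First I would observe that since $(\sI_Z)_{\gen}$ is Cohen-Macaulay, it satisfies \sk{k} for every $k\geq 0$. Applying \autoref{thm.SkForDBFamilies} then yields that for every fixed $k$, the fiber $(\sI_Z)_b$ is \sk{k} for all $b\in B$. Taking the intersection over all $k$, it follows that $(\sI_Z)_b$ satisfies \sk{k} for every $k$, and hence is Cohen-Macaulay. This is essentially the only content of the proof.

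The one subtle point I would want to be careful about is that, as in the statement of \autoref{thm.SkForDBFamilies}, one should argue on each irreducible component of $B$ separately so that the fibers have constant dimension; this is harmless since connectedness of $B$ allows us to work one component at a time and pass the conclusion from one component to its neighbors (or simply observe that the \sk{k} characterization is purely pointwise, so no global argument is needed). Since the main work has already been done in \autoref{thm.SkForDBFamilies}, there is no genuine obstacle, only the bookkeeping of letting $k$ vary.
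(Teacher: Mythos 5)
Your proposal is correct and is essentially the paper's own argument: the paper deduces the corollary immediately from \autoref{thm.SkForDBFamilies}, using exactly the fact that Cohen-Macaulay is equivalent to \sk{k} for all $k$, with no further content needed.
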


At this point it is natural to ask the next question.

\begin{question}
  \label{quest.CMOutsideOfDB}
  Assume that $(X, Z)$ is a pair and that $H \subseteq X$ is a Cartier divisor such
  that $(H, Z\cap H)$ is a \DB pair.  If $\sI_Z|_{X \setminus H}$ is Cohen-Macaulay
  then is it true that $\sI_Z$ is Cohen-Macaulay?
\end{question}

In the case that $Z = \emptyset$, the analogous result holds in characteristic $p >
0$ for $F$-injective singularities by \cite[Appendix by K.~Schwede and
A.~K.~Singh]{HoriuchiMillerShimomoto}.

\section{A result of Kov\'acs-Schwede-Smith for pairs}

The goal of this section is to prove the analog of the main result of
\cite{KovacsSchwedeSmithLCImpliesDuBois} for pairs $(X, Z)$.

\begin{lemma}
  \label{lem.PairEqualsPushdown} Let $X$ be a normal $d$-dimensional variety, $Z
  \subsetneq X$ a reduced closed subscheme and $\Sigma\subsetneq X$ a codimension
  $\geq 2$ subset containing the singular locus of $X$.  Let $\pi : \tld X \to X$ be
  a log resolution of $(X, \Sigma \cup Z)$ with $E = \pi^{-1}(\Sigma \cup Z)_{\red}$.
  Then
  \begin{enumerate}
  \item $ \DuBois{X,\Sigma \cup Z}\simeq \myR \pi_* \O_{\tld X}(-E)$, and
  \item $ \myH^{-d}(\DBDual{X, Z}) \simeq \pi_* \omega_{\tld X}(E).$
  \end{enumerate}
\end{lemma}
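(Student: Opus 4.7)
The plan for part (a) is to combine the defining triangle for the Du~Bois complex of the pair with the hyperresolution of $X$ furnished by the resolution square
\[
\xymatrix{
E \ar[r] \ar[d] & \tld X \ar[d]^{\pi} \\
\Sigma \cup Z \ar[r] & X.
}
\]
Since $\Sigma$ contains $\Sing X$, the map $\pi$ is an isomorphism over $X \setminus (\Sigma \cup Z)$, so this is a discriminant square and yields the standard Mayer--Vietoris-type distinguished triangle
\[
\DuBois{X} \to \myR \pi_* \DuBois{\tld X} \oplus \DuBois{\Sigma \cup Z} \to \myR \pi_* \DuBois{E} \xrightarrow{+1}
\]
(cf.\ \cite[Chapter~6]{KollarKovacsSingularitiesBook}). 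Since $\tld X$ is smooth and $E$ is snc, $\DuBois{\tld X} \simeq \O_{\tld X}$ and $\DuBois{E} \simeq \O_E$. Combining this with the defining triangle $\DuBois{X, \Sigma \cup Z} \to \DuBois{X} \to \DuBois{\Sigma \cup Z} \xrightarrow{+1}$ via the octahedral axiom, the fiber of $\DuBois{X} \to \DuBois{\Sigma \cup Z}$ is identified with the fiber of $\myR \pi_* \O_{\tld X} \to \myR \pi_* \O_E$, which by the short exact sequence $0 \to \O_{\tld X}(-E) \to \O_{\tld X} \to \O_E \to 0$ is precisely $\myR \pi_* \O_{\tld X}(-E)$.

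For part (b), the plan is to replace $\DuBois{X, Z}$ by $\DuBois{X, \Sigma \cup Z}$ at the level of the top cohomology sheaf and then invoke Grothendieck duality on the formula from (a). By \autoref{lem.TwoOfThreeDuBoisForPairs} with $W = \Sigma$, there is a distinguished triangle
\[
\DuBois{X, \Sigma \cup Z} \to \DuBois{X, Z} \to \DuBois{\Sigma, Z \cap \Sigma} \xrightarrow{+1}.
\]
Applying $\myD = \myR \sHom_{\O_X}(\blank, \omega_X^{\mydot})$ and taking cohomology, the third term is supported on $\Sigma$, which has dimension at most $d-2$. Hence $\myH^j\bigl(\myD(\DuBois{\Sigma, Z\cap\Sigma})\bigr) = 0$ for $j < -(d-2)$, so in particular both $\myH^{-d}$ and $\myH^{-d+1}$ of this complex vanish. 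The long exact sequence then produces a canonical isomorphism
\[
\myH^{-d}(\DBDual{X, Z}) \simeq \myH^{-d}\bigl(\myD(\DuBois{X, \Sigma \cup Z})\bigr).
\]

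To finish, I would apply Grothendieck duality for the proper morphism $\pi$ together with (a):
\[
\myD(\myR \pi_* \O_{\tld X}(-E)) \simeq \myR \pi_* \myR \sHom_{\O_{\tld X}}(\O_{\tld X}(-E), \omega_{\tld X}^{\mydot}) \simeq \myR \pi_* \omega_{\tld X}(E)[d],
\]
where the final identification uses $\omega_{\tld X}^{\mydot} \simeq \omega_{\tld X}[d]$ since $\tld X$ is smooth of dimension $d$. Taking $\myH^{-d}$ yields $\pi_* \omega_{\tld X}(E)$, completing (b). I expect the main obstacle to be the hyperresolution step in (a): one must confirm that this particular resolution square actually computes $\DuBois{X}$ as claimed. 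A safer fallback is to embed $X$ into a smooth ambient scheme and invoke \autoref{lem.EmbeddedDBCriterionForPairs}, then compare the intrinsic resolution $\tld X$ with the embedded one via a common dominating log resolution; everything else in the argument (the codimension bound used to kill the contribution of $\Sigma$ and the Grothendieck duality computation) is routine.
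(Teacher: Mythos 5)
Your overall strategy matches the paper's, but there is a genuine gap in part (b). You claim that since $\DuBois{\Sigma, Z\cap\Sigma}$ is supported on $\Sigma$ with $\dim\Sigma\le d-2$, one gets $\myH^j\bigl(\myD(\DuBois{\Sigma,Z\cap\Sigma})\bigr)=0$ for $j<-(d-2)$. That dimension bound is valid for the dual of a single coherent sheaf, but $\DuBois{\Sigma,Z\cap\Sigma}$ is a complex whose cohomology sheaves can live in strictly positive degrees (that is exactly the non-\DB situation), and dualizing shifts those contributions downward: a sheaf sitting in cohomological degree $i$ with support of dimension $t$ contributes to $\myH^j(\myD(\blank))$ for $-t-i\le j\le -i$. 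So, for instance, a nonzero $\myH^1$ or $\myH^2$ of $\DuBois{\Sigma,Z\cap\Sigma}$ with $(d-2)$-dimensional support could a priori produce nonzero classes in degrees $-d+1$ or $-d$, which are precisely the degrees you need to kill to get $\myH^{-d}(\DBDual{X,Z})\simeq\myH^{-d}(\DBDual{X,\Sigma\cup Z})$ from the long exact sequence. The required vanishing is true, but it is not ``routine'': it is exactly where the injectivity theorem enters. The paper's proof applies \autoref{thm.MainInjectivityForPairs} on $\Sigma$ to embed $\myH^{-j}\bigl(\DBDual{\Sigma,\Sigma\cap Z}\bigr)$ into $\myH^{-j}\myR\sHom^{\mydot}_{\O_\Sigma}(\sI_{(\Sigma\cap Z)\subseteq\Sigma},\omega_\Sigma^{\mydot})$, which is the dual of an honest coherent sheaf on a $(d-2)$-dimensional scheme and therefore does vanish for $j\ge d-1$; with that citation your argument closes.

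A secondary point, in part (a): the paper's notion of log resolution does not require $\pi$ to be an isomorphism over $X\setminus(\Sigma\cup Z)$ (extra smooth blowups away from $\Sigma\cup Z$ are allowed), so the square you write is not automatically a discriminant square and the Mayer--Vietoris triangle you invoke does not apply verbatim to an arbitrary $\pi$ as in the statement. The paper deals with this by first showing that $\myR\pi_*\O_{\tld X}(-E)$ and $\pi_*\omega_{\tld X}(E)$ are independent of the choice of log resolution (citing earlier work), and only then choosing $\pi$ to be an isomorphism off $\Sigma\cup Z$; your ``fallback'' via a common dominating resolution gestures at this but the main argument as written skips the reduction. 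Once both points are patched, the rest (the octahedral identification of the fiber with $\myR\pi_*\O_{\tld X}(-E)$, the use of \autoref{lem.TwoOfThreeDuBoisForPairs} with $W=\Sigma$, and the Grothendieck duality computation giving $\pi_*\omega_{\tld X}(E)$ in degree $-d$) agrees with the paper's proof.
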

\begin{proof}
  First we claim that both $\myR \pi_* \O_{\tld X}(-E)$ and $\pi_* \omega_{\tld
    X}(E)$ are independent of the choice of $\pi$. This was proved for $\myR \pi_*
  \O_{\tld X}(-E)$ on pages 67-68 in the proof of \cite[Theorem
  6.4]{KovacsSchwedeDuBoisSurvey} and for $\pi_* \omega_{\tld X}(E)$ in \cite[Lemma
  3.12]{KovacsSchwedeSmithLCImpliesDuBois}.  Therefore we are free to choose $\pi$
  and hence we may assume that it is an isomorphism outside of $\Sigma \cup Z$.  We
  have the distinguished triangle
  \[
  \xymatrix{%
    \DuBois{X} \ar[r] \ar[d]_{\simeq} & \myR \pi_* \DuBois{\tld X} \oplus \DuBois{
      \Sigma \cup Z} \ar[d]_{\simeq} \ar[r] & \myR \pi_* \DuBois{E} \ar[d]_{\simeq}
    \ar[r]^-{+1} & \\ \DuBois{X} \ar[r] & \myR \pi_* \O_{\tld X} \oplus
    \DuBois{\Sigma \cup Z} \ar[r] & \myR \pi_* \O_E \ar[r]^-{+1} & }
  \]
  The isomorphisms follow since $\tld X$ and $E$ are \DB.  In the next diagram the
  first two rows are distinguished triangles by definition cf.\ \autoref{eq:1}. The
  third row is simply the pushforward of a natural short exact sequence from $\tld
  X$. The previous diagram and \cite[Lemma 2.1]{KollarKovacsLCImpliesDB} (or simply the octahedral axiom) implies that
  $\alpha$ below is an isomorphism.  The other two isomorphisms again follow since
  $\tld X$ and $E$ are \DB. Note that the columns are \emph{not} exact.
  \[
  \xymatrix{%
    \DuBois{X,\Sigma \cup Z} \ar[r] \ar[d]_{\simeq}^\alpha & \DuBois{X} \ar[r] \ar[d]
    & \DuBois{\Sigma \cup Z} \ar[d] \ar[r]^-{+1} &\\ \myR \pi_* \DuBois{\tld X,E}
    \ar[r] \ar@{.>}[d] & \myR \pi_* \DuBois{\tld X} \ar[r] \ar[d]_{\simeq} & \myR
    \pi_* \DuBois{E} \ar[r]^-{+1}\ar[d]_{\simeq} &\\ \myR \pi_* \O_{\tld X}(-E)
    \ar[r] & \myR \pi_* \O_{\tld X} \ar[r] & \myR \pi_* \O_{E} \ar[r]^-{+1} & }
  \]
  It follows that the dotted arrow and hence its composition with $\alpha$ are also
  isomorphisms.  This proves (a).

  In order to prove (b), consider the map $\DuBois{X, \Sigma \cup Z} \to \DuBois{X,
    Z}$ obtained in
  \[ \xymatrix{ \DuBois{X, \Sigma \cup Z} \ar@{..>}[d] \ar[r] & \DuBois{X}
    \ar@{=}[d]\ar[r] & \DuBois{\Sigma \cup Z} \ar^-{+1}[r] \ar[d]& \\ \DuBois{X, Z}
    \ar[r] & \DuBois{X} \ar[r] & \DuBois{Z} \ar^-{+1}[r] &.  }
  \]
  Now we have a distinguished triangle
  \begin{equation}
    \label{eq.InducedTriangle}
    \DuBois{X, \Sigma \cup Z} \to \DuBois{X, Z} \to C^{\mydot} \xrightarrow{+1}.
  \end{equation}
  \begin{claim}
    \label{claim.ChasNoD}
    With notation above, $0 = \myH^{-d}(\myD(C^{\mydot})) =
    \myH^{-d+1}(\myD(C^{\mydot}))$.
  \end{claim}
  \begin{proof}[Proof of claim]
    Consider the following diagram with distinguished triangles as rows and columns:
    \[
    \xymatrix{%
      \DuBois{\Sigma \cup Z}[-1] \ar[d] \ar[r] & \DuBois{Z}[-1] \ar[d] \ar[r] &
      \DuBois{\Sigma \cup Z, Z}  \ar[r]^-{+1} &\\
      \DuBois{X, \Sigma \cup Z} \ar[d] \ar[r] & \DuBois{X, Z} \ar[d] \ar[r]
      &C^{\mydot} \ar[r]^-{+1} &\\ \DuBois{X} \ar[d]_-{+1} & \DuBois{X} \ar[d]_-{+1}
      & & \\ & & & }
    \]
    It follows from \cite[Theorem B.1]{Kovacs11b} that $C^{\mydot} \qis \DuBois{\Sigma \cup
      Z, Z}$.
    On the other hand, by \autoref{lem.BasicPropertiesOfDuBois}\autoref{item:4} $\DuBois{\Sigma
      \cup Z, Z}\simeq \DuBois{\Sigma, \Sigma \cap Z}$ and hence $C^{\mydot}\simeq
    \DuBois{\Sigma, \Sigma \cap Z}$.

    Next recall that by \autoref{thm.MainInjectivityForPairs} there exists a natural
    injective map
    \begin{equation}
      \label{eq:2}
      \myH^{-j} \left(\myD (\DuBois{\Sigma, \Sigma \cap Z})\right) \into \myH^{-j} \left(\myR
        \sHom^{\mydot}_{\O_{\Sigma}}(\sI_{(\Sigma \cap Z) \subseteq \Sigma},
        \omega_{\Sigma}^{\mydot})\right).
    \end{equation}
    Since $\dim \Sigma \leq d-2$, the right hand side of (\ref{eq:2}) is zero for $j
    \geq d-1$.  This completes the proof of the claim.
  \end{proof}
  Grothendieck duality and part (a) implies that $\myH^{-d}(\DBDual{X, \Sigma \cup
    Z}) \simeq \pi_*\omega_{\tld X}(E)$ and it follows from \autoref{claim.ChasNoD}
  that $\myH^{-d}(\DBDual{X, \Sigma \cup Z}) \simeq \myH^{-d}(\DBDual{X, Z})$, which
  in turn implies part (b).
\end{proof}

\begin{theorem}
  \label{thm.KSSPairs} Let $X$ be a normal variety and $Z \subseteq X$ a divisor.
  Let $\pi : \tld X \to X$ be a log resolution of $(X, Z)$ with $E =
  \pi^{-1}(Z)_{\red} \vee \exc(\pi)$.  If $\sI_Z$ is Cohen-Macaulay then $(X, Z)$ is
  Du Bois if and only if
  \[
  \pi_* \omega_{\tld X}(E) \simeq \omega_X(Z).
  \]
\end{theorem}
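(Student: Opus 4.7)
The plan is to dualize through Grothendieck duality and use the Cohen-Macaulay hypothesis on $\sI_Z$ to collapse $\myD(\sI_Z)$ to a single cohomological degree, so that everything reduces to a single comparison of sheaves.

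First I would unpack $\myD(\sI_Z) = \myR\sHom_{\O_X}^{\mydot}(\sI_Z, \omega_X^{\mydot})$. Since $\sI_Z$ is Cohen-Macaulay of dimension $d = \dim X$, the local duality/Ext vanishing criterion for Cohen-Macaulayness forces $\sExt^i_{\O_X}(\sI_Z, \omega_X^{\mydot}) = 0$ for $i \neq -d$, so $\myD(\sI_Z)$ is concentrated in degree $-d$. Because $Z$ is a divisor on the normal variety $X$, the reflexive rank one sheaf in degree $-d$ is canonically $\sHom_{\O_X}(\sI_Z, \omega_X) = \omega_X(Z)$, giving the clean identification
\[
\myD(\sI_Z) \simeq \omega_X(Z)[d].
\]

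Next, observe that $(X, Z)$ is Du Bois precisely when the natural map $\sI_Z \to \DuBois{X,Z}$ is a quasi-isomorphism, and by Grothendieck duality this is equivalent to the dual map $\DBDual{X,Z} \to \myD(\sI_Z)$ being a quasi-isomorphism. Now apply \autoref{thm.MainInjectivityForPairs}: the induced maps
\[
\Phi^j : \myH^j(\DBDual{X,Z}) \into \myH^j(\myD(\sI_Z))
\]
are injective for every $j$. Since the target vanishes for $j \neq -d$, the injectivity forces $\myH^j(\DBDual{X,Z}) = 0$ for $j \neq -d$ \emph{unconditionally}. Hence $\DBDual{X,Z} \to \myD(\sI_Z)$ is a quasi-isomorphism if and only if the single map $\Phi^{-d} : \myH^{-d}(\DBDual{X,Z}) \into \omega_X(Z)$ is an isomorphism.

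Finally, \autoref{lem.PairEqualsPushdown}(b) already identifies $\myH^{-d}(\DBDual{X,Z}) \simeq \pi_*\omega_{\tld X}(E)$. Combining the previous step with this identification yields: $(X,Z)$ is Du Bois if and only if the resulting injection $\pi_*\omega_{\tld X}(E) \into \omega_X(Z)$ is an isomorphism, which is exactly the stated criterion. The only real obstacle is a bookkeeping one, namely checking that the isomorphism from \autoref{lem.PairEqualsPushdown}(b) and the injection from \autoref{thm.MainInjectivityForPairs} compose to give the \emph{natural} inclusion $\pi_*\omega_{\tld X}(E) \hookrightarrow \omega_X(Z)$ induced by the log resolution; once this compatibility is in place, both implications are formal.
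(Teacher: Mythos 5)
Your proposal is correct and is essentially the paper's own argument: collapse $\myR\sHom_{\O_X}^{\mydot}(\sI_Z,\omega_X^{\mydot})$ to $\omega_X(Z)[\dim X]$ using the Cohen--Macaulay hypothesis, use \autoref{thm.MainInjectivityForPairs} to force $\myH^{j}(\DBDual{X,Z})=0$ for $j\neq-\dim X$ and reduce to the single map in degree $-\dim X$, and then identify $\myH^{-\dim X}(\DBDual{X,Z})\simeq\pi_*\omega_{\tld X}(E)$ via \autoref{lem.PairEqualsPushdown}(b). The compatibility point you raise at the end is harmless, since the maps involved are all induced by the natural morphism $\sI_Z\to\DuBois{X,Z}$, exactly as in the paper.
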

\begin{proof}
  Since $\sI_Z$ is Cohen-Macaulay, $\myR \sHom_{\O_X}^{\mydot}(\sI_Z,
  \omega_X^{\mydot}) \qis \sHom_{\O_X}(\sI_Z, \omega_X)[\dim X]$ by the local dual of
  the local cohomology criterion for Cohen-Macaulayness.  Because the map
  \begin{equation}
    \label{eq.DBDualToMapInjection}
    \DBDual{X, Z} \to \myR  \sHom_{\O_X}^{\mydot}(\sI_Z, \omega_X^{\mydot})
  \end{equation}
  is injective on cohomology by \autoref{thm.MainInjectivityForPairs}, it follows
  that $\myH^{i}(\DBDual{X,Z}) = 0$ for $i \neq -\dim X$ and hence $(X, Z)$ is Du
  Bois if and only if $\myH^{-\dim X}(\DBDual{X,Z}) \to \sHom_{\O_X}(\sI_Z, \omega_X)
  \simeq \omega_X(Z)$ is an isomorphism.  But $\myH^{-\dim X}(\DBDual{X,Z}) \simeq
  \pi_* \omega_{\tld X}(E)$ by \autoref{lem.PairEqualsPushdown}, so the statement
  follows.
\end{proof}

\section{An inversion of adjunction for rational and Du Bois pairs}

In this final section of the paper, we will prove the following theorem.

\begin{theorem}
  \label{thm.InvOfAdjForDuBoisPairs}
  Let $f : X \to B$ be a flat projective family with geometrically integral fibers over a smooth connected base $B$, $A
  \subseteq B$ a smooth closed subscheme containing no component of $B$ and $H =
  f^{-1} A = X \times_B A$ (with the induced scheme-theoretic structure).  Let $D$ be a reduced
  codimension $1$ subscheme of $X$ which is flat over $B$.
  %
  %
  Assume that for every $s\in A$, $(X_s, D_s)$ is \DB and that $(X \setminus H, D
  \setminus H)$ is a rational pair.  Then $(X, D)$ is a rational pair.
\end{theorem}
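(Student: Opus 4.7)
The plan is to deduce the rational pair property of $(X,D)$ from the pointwise \DB hypothesis along $A$ combined with rationality on the complement of $H$, in four main steps using essentially all of Theorems A--D. For clarity I focus on the case where $A$ is a smooth divisor in $B$ so that $H$ is a Cartier divisor in $X$; the higher-codimension case follows either by a similar argument, made easier by the fact that $H$ has codimension $\geq 2$ and $S_2$-extension across $H$ becomes available, or by slicing $B$ with general hypersurfaces to reduce to the divisor case.

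First I would establish that $(X,D)$ is a \DB pair. For each closed point $s \in A$, \autoref{cor.DBDeformsArbitraryBase} produces an open neighborhood $U_s \subseteq B$ over which $(X,D)$ is \DB. These neighborhoods cover $A$, and on $B \setminus A$ the pair $(X \setminus H, D \setminus H)$ is rational, hence \DB. Running the same deformation argument for the family $(H, D|_H) \to A$ (whose fibers are the \DB pairs $(X_s,D_s)$ and whose base $A$ is smooth) shows in the same way that $(H, D|_H)$ is itself a \DB pair.

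Second I would show that $\sI_D$ is Cohen--Macaulay. The generic point of $B$ lies in $B \setminus A$, so the generic fiber $(X_\eta, D_\eta)$ is rational and hence, by \autoref{cor:rational-pairs-criterion} and Grothendieck duality, $\sI_{D_\eta}$ is Cohen--Macaulay. Applying \autoref{cor.CMClosedDuBoisFamilies} to the \DB family $(X,D) \to B$ propagates this to every fiber, and flatness of $\sI_D$ over the smooth base $B$ gives that $\sI_D$ itself is Cohen--Macaulay. Normality of $X$ (needed to apply the rational and \DB pair machinery) follows from normality of $X \setminus H$, flatness over the smooth base, and the fiberwise information. Now fix a thrifty log resolution $\pi \colon \tld X \to X$ of $(X,D)$ and set $E = \pi^{-1}(D)_{\red} \vee \exc(\pi) = D_{\tld X} + F$, where $D_{\tld X}$ is the strict transform and $F$ the sum of the $\pi$-exceptional divisors. \autoref{thm.KSSPairs} applied to $(X,D)$ then gives
\[
\pi_*\omega_{\tld X}(E) \simeq \omega_X(D).
\]
By \autoref{cor:rational-pairs-criterion} it suffices to upgrade this to $\pi_*\omega_{\tld X}(D_{\tld X}) \simeq \omega_X(D)$. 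The short exact sequence
\[
0 \to \omega_{\tld X}(D_{\tld X}) \to \omega_{\tld X}(E) \to \omega_F(D_{\tld X}|_F) \to 0
\]
together with the vanishing $\myR^1 \pi_* \omega_{\tld X}(D_{\tld X}) = 0$ from \autoref{thm.GRVanishingForThrifty} identifies the cokernel of $\pi_*\omega_{\tld X}(D_{\tld X}) \hookrightarrow \pi_*\omega_{\tld X}(E)$ with $\pi_*\omega_F(D_{\tld X}|_F)$, which vanishes on $X \setminus H$ because the pair is rational there.

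The hard part, which I expect to be the main obstacle, is showing that this cokernel also vanishes \emph{along} $H$. My plan is to apply \autoref{thm.KSSPairs} a second time, to the pair $(X, D+H)$. This pair is \DB by \autoref{lem.TwoOfThreeDuBoisForPairs}, since both $(X,D)$ and $(H, D|_H)$ are \DB by the first step and $H$ is Cartier; moreover $\sI_{D+H} = \sI_D \cdot \sI_H$ is Cohen--Macaulay because $H$ is Cartier and $\sI_D$ is Cohen--Macaulay by the second step. Comparing the two identifications
\[
\pi_*\omega_{\tld X}(E) \simeq \omega_X(D), \qquad \pi_*\omega_{\tld X}(E + H_{\tld X}) \simeq \omega_X(D+H)
\]
via the projection formula $\omega_X(D+H) \simeq \omega_X(D) \otimes \sO_X(H)$ and the relation $\pi^* H = H_{\tld X} + F_H$ (with $F_H$ the exceptional part over $H$), one should be able to force the cokernel $\pi_*\omega_F(D_{\tld X}|_F)$ to vanish along $H$, completing the proof.
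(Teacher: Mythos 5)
Your outline follows essentially the same route as the paper's proof: use the deformation results to make $(X,D)$, its fibers, and $(H,D\cap H)$ \DB; combine \autoref{cor.CMClosedDuBoisFamilies} with rationality over the generic point to get $\sI_D$ Cohen--Macaulay, so that by \autoref{cor:rational-pairs-criterion} everything reduces to $\pi_*\omega_{\tld X}(D_{\tld X})\simeq\omega_X(D)$; make $(X,D+H)$ a \DB pair via \autoref{lem.TwoOfThreeDuBoisForPairs}; identify $\pi_*\omega_{\tld X}$ of the big boundary with $\omega_X(D+H)$ (the paper does this through \autoref{lem.PairEqualsPushdown} rather than \autoref{thm.KSSPairs}, which also lets it skip the Cohen--Macaulayness of $\sI_{D+H}$); and finish by twisting by $-H$ and using the projection formula. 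Your final ``force the cokernel to vanish along $H$'' step, once written out, is exactly the paper's \autoref{clm.PiPushforwardWithoutE} plus the twist: exceptional components over $H$ occur in $\pi^*H$ with coefficient at least $1$ and so are absorbed by the twist, while the exceptional components not lying over $H$ are removed by the rational-pair hypothesis on $X\setminus H$; so that sketch is completable and is not a different argument.

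The genuine gap is the resolution you work on. You need a single $\pi$ that is simultaneously a thrifty resolution of $(X,D)$ (for \autoref{cor:rational-pairs-criterion} and \autoref{thm.GRVanishingForThrifty}) and a log resolution of $(X,D+H)$ (and of the non-snc locus) so that \autoref{thm.KSSPairs}/\autoref{lem.PairEqualsPushdown} may be applied to $(X,D+H)$ on the same model and the two identifications can be compared. Such a $\pi$ need not exist a priori: if a stratum of the snc locus of $(X,D)$ were contained in $H$, making $\pi^{-1}(H)$ snc would force blowing up over that stratum and destroy thriftiness. The paper excludes this in \autoref{lem.NoStratumOfXDinH}, and that lemma is not formal --- its proof uses the \DB hypotheses along $H$ essentially, via \autoref{lem.DBPairCartierImpliesAmbientDB} and two applications of \autoref{thm.MainTheoremDeformation}, to contradict log canonicity of $(X,D+H)$ at the generic point of such a stratum. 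Your proposal never addresses this compatibility (one could instead transfer the valuative information from a separate log resolution of $(X,D+H)$ dominating $\tld X$, but that argument also has to be supplied). A smaller omission of the same kind: normality of $X$, which you need even to write $\omega_X(D)$ and to speak of rational pairs, is not a consequence of ``flatness plus fiberwise information'' as stated; the paper proves it by an \stwo{} and \rone{} argument using that $H$ is a reduced Cartier divisor (\autoref{lemma.XNormal}).
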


\begin{remark} In the introduction, $A$ was assumed to be a closed point.  This
  version is more general and more convenient for our proof.
\end{remark}

\begin{remark}
  \label{rem.additional.conditions}
  The assumptions also imply the following auxiliary conditions:
  \begin{enumerate}
  \item Since $X \to B$ has geometrically integral fibers and $H$ is obtained by base change with a smooth subscheme, $H$ is reduced.
  \item $\sI_D$ is flat over $B$ and no component of $D$ contains a fiber of $f$.
    In particular $D$ and $H$ have no common components.
  \item As for any $s\in A$, $H_s=X_s$, it follows that $(H, D\cap H)$ is \DB by
    \autoref{cor.DBDeformsArbitraryBase}.
  \item $X \setminus H$ is normal by the definition of a rational pair.
  \end{enumerate}
\end{remark}

Before embarking on proving the theorem, we will first prove several lemmas that show
that our situation is simpler than it might first appear.

First we show that we may assume that $A$ is a divisor in $B$.

\begin{lemma}
  \label{lem.BaseCanBeCurve} In order to prove \autoref{thm.InvOfAdjForDuBoisPairs}
  it is sufficient to assume that $A$ is a smooth Cartier divisor in $B$.
\end{lemma}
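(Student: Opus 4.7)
The plan is to reduce to the Cartier-divisor case by working locally on $B$, enlarging $A$ to a smooth Cartier divisor $A' \supseteq A$ and verifying that all hypotheses of \autoref{thm.InvOfAdjForDuBoisPairs} are preserved. Since being a rational pair is a local property on $X$, it suffices, for each closed point $x_0 \in X$, to prove that $(X, D)$ is a rational pair in some Zariski-open neighborhood of $x_0$. If $x_0 \notin H$, this is immediate from the hypothesis that $(X \setminus H, D \setminus H)$ is a rational pair, so we may assume $x_0 \in H$ and set $b_0 := f(x_0) \in A$.

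Let $r$ be the codimension of $A$ in $B$ near $b_0$. Because $B$ is smooth and $A \subseteq B$ is a smooth closed subscheme, after replacing $B$ by a Zariski-open neighborhood of $b_0$ we may choose regular parameters at $b_0$ cutting $A$ out as $V(x_1, \dots, x_r)$. Set $A' := V(x_1)$, a smooth Cartier divisor in $B$ that contains $A$, and note that after possibly shrinking so that $B$ is irreducible, $A'$ contains no component of $B$. Write $H' := f^{-1}(A') \supseteq H$.

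It remains to verify the hypotheses of \autoref{thm.InvOfAdjForDuBoisPairs} with $A, H$ replaced by $A', H'$. Since $D$ is flat over $B$, \autoref{cor.DBDeformsArbitraryBaseFibers} applied at each point of $A$ produces a Zariski-open neighborhood of $A$ in $B$ over which every fiber $(X_u, D_u)$ is \DB; shrinking $B$ further we may assume this neighborhood is all of $B$, so that in particular $(X_s, D_s)$ is a \DB pair for every $s \in A'$. Moreover, $X \setminus H'$ is open in $X \setminus H$ and the rational-pair property is local on $X$, so $(X \setminus H', D \setminus H')$ inherits the rational-pair structure from $(X \setminus H, D \setminus H)$. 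The remaining hypotheses ($f$ flat projective, $B$ smooth connected, $X$ geometrically integral, $D$ flat of codimension one) hold on the shrunken base by construction.

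Invoking the Cartier-divisor case of \autoref{thm.InvOfAdjForDuBoisPairs} for the quadruple $(f\colon X \to B, A', H', D)$ then yields that $(X, D)$ is a rational pair on the (shrunken) $X$, and in particular near $x_0$. The only point requiring genuine argument is the upgrade from ``\DB on fibers over $A$'' to ``\DB on fibers over $A'$,'' and this is exactly the openness statement \autoref{cor.DBDeformsArbitraryBaseFibers}; everything else is bookkeeping on local coordinates around $b_0$.
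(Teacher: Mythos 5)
Your proposal is correct and follows essentially the same route as the paper: localize on the base near a point of $A$, use the openness statement \autoref{cor.DBDeformsArbitraryBaseFibers} to guarantee that all nearby fibers $(X_u,D_u)$ are \DB, enlarge $A$ to a smooth Cartier divisor through it, and observe that the rational-pair hypothesis restricts to the smaller open set $X\setminus H'$. The only (cosmetic) difference is that the paper produces the divisor as a general smooth hypersurface $G\supseteq A$, whereas you take a coordinate hyperplane $V(x_1)$ from a regular system of parameters cutting out $A$; both choices work.
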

\begin{proof}
  The statement is local over the base so we may assume that $B$ is affine.
  Additionally, since we only need to work in a neighborhood of a point $a \in A$, we
  may assume that $(X, D)$ is \DB and all the fibers $(X_b, D_b)$ for all $b \in B$
  are \DB by \autoref{cor.DBDeformsArbitraryBase} and
  \autoref{cor.DBDeformsArbitraryBaseFibers}.  Choose a general hypersurface $G$
  containing $A$ and note that since $A$ is smooth we may assume that $G$ is smooth.
  Then the hypotheses of the theorem are satisfied for $G$ replacing $A$ as well
  since $X \setminus f^{-1}(G) \subseteq X \setminus f^{-1} (A)$ and since we already
  assumed that all the fibers $(X_b, D_b)$ over all points $b\in B$ were \DB.
\end{proof}

From this point forward, we will assume that $B$ is a smooth affine scheme, $A$ is a
smooth hypersurface in $B$ and $H = f^* A$.

Next we show that under the assumptions of the theorem we have the following:

\begin{lemma}
  \label{lemma.XNormal} $X$ is normal and thus $D$ is also a divisor.
\end{lemma}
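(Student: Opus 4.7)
My plan is to verify Serre's conditions $R_1$ and $S_2$ for $X$; combined with reducedness (inherited from geometric integrality), this forces $X$ to be normal, and then $D$ is automatically a Weil divisor as a reduced codimension-one subscheme of a normal variety.

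For $R_1$, every codimension-one point of $X$ either lies in $X\setminus H$, where normality (hence regularity at codimension-one points) is given by the rational pair hypothesis, or is a generic point $\eta$ of an irreducible component of $H$. At such an $\eta$, the integrality of $X$ makes $\sO_{X,\eta}$ a one-dimensional local domain, and its maximal ideal is principal, generated by the pullback of a local defining equation of $A$, so it is a DVR.

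For $S_2$, the non-$S_2$ locus is contained in $H$, so I work with $x\in H$ and set $a=f(x)$, $h=\dim\sO_{B,a}$, $d=\dim\sO_{X_a,x}$. Flatness of $f$ and smoothness of $B$ yield $\operatorname{depth}\sO_{X,x}=h+\operatorname{depth}\sO_{X_a,x}$ and $\dim\sO_{X,x}=h+d$. Since $a\in A$ forces $h\geq 1$, the only nontrivial case is $h=1$ with $d\geq 1$, where $a$ is a generic point of an irreducible component of $A$. The desired inequality $\operatorname{depth}\sO_{X,x}\geq 2$ then reduces to $\operatorname{depth}\sO_{X_a,x}\geq 1$; since the local equation of $H$ is a non-zero-divisor on $\sO_{X,x}$ cutting out $\sO_{X_a,x}=\sO_{H,x}$, this is equivalent to asking that $H$ be $S_1$ at $x$.

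The heart of the proof is producing this $S_1$ property of $H$. I would first apply \autoref{thm.MainTheoremDeformation} to the Cartier divisor $H$ in $X$: the hypothesis that $(H,D\cap H)$ is a Du~Bois pair (which follows from $(X_s,D_s)$ being Du~Bois for every $s\in A$ via \autoref{cor.DBDeformsArbitraryBase}, as noted in Remark~\ref{rem.additional.conditions}(b)) implies that $(X,D)$ is a Du~Bois pair in a neighborhood of $H$. Combined with $(X\setminus H,D\setminus H)$ being a rational pair (hence a Du~Bois pair), this gives that $(X,D)$ is a Du~Bois pair on all of $X$. Then \autoref{cor.CMClosedDuBoisFamilies} applied to the flat projective Du~Bois family $f\colon (X,D)\to B$, together with the observation that the generic fiber $X_{\eta_B}$ lies in $X\setminus H$ and therefore has $\sI_{D_{\eta_B}}$ Cohen--Macaulay (since rational pairs do), forces $\sI_{D_b}$ to be Cohen--Macaulay for every $b\in B$. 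Using flatness of $\sI_D$ over $B$ and $H$ being a Cartier divisor, the restriction $\sI_{D\cap H\subseteq H}\simeq\sI_D/t\sI_D$ is Cohen--Macaulay on $H$. A depth chase in the short exact sequence $0\to\sI_{D\cap H}\to\sO_H\to\sO_{D\cap H}\to 0$, combined with the Du~Bois pair structure on $(H,D\cap H)$, then gives the desired $S_1$ property of $\sO_H$.

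The main obstacle, which I expect to be the technical core, is controlling the depth of $\sO_{D\cap H}$, i.e., ruling out embedded primes arising from $D\cap H$ in the short exact sequence above. The Du~Bois pair hypothesis restricts the possible associated primes via the identification $\myH^0\DuBois{H,D\cap H}\simeq\sI_{D\cap H}$ together with the seminormalization structures $\myH^0\DuBois{H}\simeq\sO_{H^{\mathrm{SN}}}$ and $\myH^0\DuBois{D\cap H}\simeq\sO_{(D\cap H)^{\mathrm{SN}}}$; I expect the injectivity theorem \autoref{thm.MainInjectivityForPairs} to supply the additional depth information needed to close the argument. Once $H$ is shown to be $S_1$, the depth formula yields $S_2$ for $X$, and combined with $R_1$, normality follows.
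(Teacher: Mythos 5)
Your overall framework (Serre's criterion: reduced, $R_1$ and $S_2$ imply normal) is the same as the paper's, but both halves of your argument have a genuine gap, and both gaps trace back to the same missing ingredient: the reducedness of $H$. In the $R_1$ step, the assertion that the maximal ideal of $\sO_{X,\eta}$ is generated by the pullback of a local defining equation of $A$ is \emph{exactly} the statement that $H$ is reduced at $\eta$: writing $t$ for that equation, $\sO_{H,\eta}=\sO_{X,\eta}/(t)$ is a field if and only if $(t)=\frm_\eta$. Integrality of $X$ only gives you a one-dimensional local domain; if $H$ were non-reduced at $\eta$ then $(t)$ would be a proper subideal of $\frm_\eta$, and the local ring need not be a DVR at all. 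Nothing in your argument rules this out, and note that your later appeal to \autoref{thm.MainTheoremDeformation} itself requires $H$ to be a \emph{reduced} Cartier divisor, so reducedness cannot be obtained from that theorem after the fact.

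In the $S_2$ step your reduction via the flat depth formula to ``$\sO_H$ is $S_1$ at the relevant points'' is fine, but you then do not prove that statement: \autoref{cor.CMClosedDuBoisFamilies} gives Cohen--Macaulayness of the fiberwise ideals $\sI_{D_b}$, and you concede that the decisive step --- controlling the depth of $\sO_{D\cap H}$, i.e.\ ruling out embedded primes of $\sO_H$ --- is something you ``expect'' \autoref{thm.MainInjectivityForPairs} to supply. That is precisely the unfinished core, so the proposal is incomplete as written. The paper's proof is a few lines because it uses the reducedness of $H$ directly (cf.\ the hypotheses of Theorem~E and \autoref{rem.additional.conditions}): a reduced scheme is $S_1$ and generically regular, and since $f$ is flat the local equation of $H$ is a nonzerodivisor on $\sO_X$, so at every point of $X$ lying on $H$ one gets depth at least $\min(2,\dim)$, hence $S_2$ along $H$, and the codimension-one points of $X$ on $H$ are generic points of the Cartier divisor $H$, where reducedness makes $\sO_{X,\eta}$ a DVR, hence $R_1$ along $H$; on $X\setminus H$ both conditions follow from the assumed normality. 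Once reducedness of $H$ is in place, your $R_1$ argument is repaired and the entire Cohen--Macaulay detour for $S_2$ becomes unnecessary.
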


\begin{proof}
  Since $H$ is reduced, every point $\eta\in H$ has depth at least $\min(1, \dim
  \O_{H,\eta})$.  Because $f : X \to B$ is flat, the local defining equation of $H$
  is a regular element in $\O_X$, so any point $\eta\in X$ that lies in $H$ has depth
  at least $\min(2, \dim \O_{X,\eta})$.  Since $X
  \setminus H$ is normal it is \stwo{} and so $X$ is \stwo{} everywhere.  Finally
  observe that $H$ is reduced, hence generically regular and $X \setminus H$ is
  \rone{}. As $H$ is Cartier, this implies that $X$ is also \rone{} and therefore
  normal.
\end{proof}

Now observe that the fact that $(H, D\cap H)$ is \DB (cf.\
\autoref{rem.additional.conditions}(b)) says something about the structure of $D$ on
$X$.

\begin{lemma}
\label{lem.NoStratumOfXDinH}
  With notation as in \autoref{thm.InvOfAdjForDuBoisPairs}, no stratum of the snc
  locus of $(X, D)$ can be contained inside $H$.
\end{lemma}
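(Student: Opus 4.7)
The plan is to argue by contradiction. Suppose $S$ is a stratum of the snc locus of $(X,D)$ with $S\subseteq H$, and let $\eta$ be its generic point, with codimension $k=\mathrm{codim}_X S\geq 1$. The first thing to dispose of is the case $k=1$: then $S$ is a whole irreducible component of $D$, and $S\subseteq H$ would contradict the fact (from \autoref{rem.additional.conditions}(a)) that $D$ and $H$ share no components. So from now on assume $k\geq 2$.

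The main idea is to extract a contradiction from the local structure at $\eta$ and the DB hypothesis on $(X_s,D_s)=(H,D\cap H)$. Working \'etale-locally, choose snc coordinates $y_1,\dots,y_k$ at $\eta$ so that $D=V(y_1\cdots y_k)$ and the maximal ideal of $R=\O_{X,\eta}$ is $\frm_\eta=(y_1,\dots,y_k)$. Since $H$ is a Cartier divisor through $S$, we have $H=V(h)$ for some $h\in\frm_\eta$, and since no component of $D$ is contained in $H$, $h\notin(y_i)$ for each $i$. Write $h=\sum_{i=1}^k y_ig_i$. I would now analyze the ideal $I=(y_1\cdots y_k,h)$ defining $D\cap H$ in $X$ near $\eta$, and show it forces $(H,D\cap H)$ to fail being a \DB pair at $\eta$, contradicting the hypothesis.

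For the base case $k=2$, the ideal $I=(y_1y_2,h)$ has height $2$ in the $2$-dimensional regular local ring $R$, so $R/I$ is Artinian local. A direct verification shows $y_1\notin I$: an equation $y_1=ay_1y_2+bh$ would force $bg_2y_2=0$ with $y_1$-coefficient $1$, impossible in the regular local ring since $h\notin(y_1)$ forces a suitable $g_i$ to be a unit. On the other hand $y_1\cdot h-y_1y_2\,g_2=y_1^2g_1\in I$, so $y_1^2g_1\in I$ while $y_1\notin I$, proving $I$ is not radical. Thus $D\cap H$ is non-reduced at $\eta$, which contradicts \autoref{lem.DBPairImpliesReduced} because $H$ is reduced and $(H,D\cap H)$ is assumed \DB.

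For $k\geq 3$ the ideal $I$ can be radical, so reducedness alone is not enough. Here my plan is to restrict to a general $k$-dimensional transversal $T$ through $\eta$, obtaining a pair of the form $(V(h|_T),V(y_1\cdots y_k,h|_T))$ inside $T\cong\bA^k$. After a change of linear coordinates one may assume $h|_T$ has a nonzero linear part involving every $y_i$ (using $h\notin(y_i)$), so $V(h|_T)\cong\bA^{k-1}$ and $V(y_1\cdots y_k,h|_T)\subset V(h|_T)$ is isomorphic to a union of $k$ hyperplanes through the origin of $\bA^{k-1}$, with any $k-1$ of them linearly independent. Projectivizing, this identifies the pair with the cone pair over $(\bP^{k-2},Z)$ where $Z$ is $k$ hyperplanes in general position. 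The Bertini statement \autoref{lem.BasicPropertiesOfDuBois}(b) propagates the \DB property to this transversal pair, so it suffices to show this cone pair is not \DB. Applying (a version of) the cone criterion via \autoref{prp:cone DB criterion} or \autoref{lem.DBPairsForGradedRings} reduces the question to the non-vanishing $H^{k-2}(\bP^{k-2},\O(1-k))\neq 0$, which holds by direct computation for $k\geq 3$. This contradicts $(H,D\cap H)$ being \DB and completes the argument.

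The hard part is the case $k\geq 3$: setting up the restriction to a transversal compatibly with Bertini, checking that the resulting local pair really is the claimed cone over a hyperplane arrangement, and invoking the appropriate cone criterion, since the ``off-the-shelf'' version \autoref{prp:cone DB criterion} requires an snc divisor, whereas our arrangement is not snc (too many hyperplanes through the vertex). The cleanest way around this is to use \autoref{lem.DBPairsForGradedRings} after verifying that the isolated non-snc singularity at the vertex has a controlled form, or to replace the arrangement by its reduced central hyperplane configuration and track the cohomology explicitly as above.
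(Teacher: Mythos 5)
Your reduction to the generic point $\eta$ of the stratum, the $k=1$ case via \autoref{rem.additional.conditions}(a), and the $k=2$ case are fine: for $k=2$ the ideal $(y_1y_2,h)$ cuts out an Artinian local ring properly contained in $R/\frm$, so $D\cap H$ is non-reduced at $\eta$, contradicting \autoref{lem.DBPairImpliesReduced} (your specific justification ``$h\notin(y_1)$ forces a suitable $g_i$ to be a unit'' is not right, e.g.\ $h=y_1^2+y_2^3$, but the conclusion survives by the Artinian argument). The genuine gap is the case $k\geq 3$. First, the structural claim that after a linear change of coordinates $h|_T$ has a nonzero linear part involving every $y_i$ is false: $h\notin(y_i)$ only says $h$ does not vanish identically on the $i$-th branch of $D$, not that $\partial h/\partial y_i(\eta)\neq 0$. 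So $H$ may be singular at $\eta$ (e.g.\ $h=y_1y_2+y_3^2$), and even when $H$ is smooth the trace of $D$ on $H$ need not be $k$ hyperplanes in general position: for $k=3$ and $h=y_1+y_2$ it is the non-reduced $V(y_1^2y_3)$, and for $h=y_1+y_2+y_3^2$ it is two lines plus a conic tangent to one of them. In all these cases your identification with the cone over $k$ general hyperplanes in $\bP^{k-2}$ breaks down, so the computation $H^{k-2}(\bP^{k-2},\O(1-k))\neq 0$ via \autoref{prp:cone DB criterion} or \autoref{lem.DBPairsForGradedRings} does not apply. Second, the cut to a ``general $k$-dimensional transversal through $\eta$'' is not covered by \autoref{lem.BasicPropertiesOfDuBois}(b): that statement is about general members of a base point free linear system, and such members miss the prescribed point $\eta$; cutting by hypersurfaces forced to pass through $\eta$ needs a separate argument which the paper does not provide.

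The paper's proof avoids all of this local normal-form analysis and works uniformly in the codimension. At $\eta$ the local ring is regular of dimension $n$, the reduced divisor $D+H$ has at least $n+1$ components through $\eta$ (since $D$ and $H$ share no component), so $(X,D+H)$ is not log canonical at $\eta$ and hence not \DB there ($X$ is Gorenstein and $D+H$ is Cartier at $\eta$). On the other hand, since $(H,D\cap H)$ is a \DB pair with $D\cap H$ Cartier in $H$, \autoref{lem.DBPairCartierImpliesAmbientDB} and the deformation theorem \autoref{thm.MainTheoremDeformation} (together with \autoref{lem.TwoOfThreeDuBoisForPairs}) force $(X,D+H)$ to be \DB near $H$, a contradiction. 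If you want to salvage your strategy, replace the cone analysis for $k\geq 3$ by this multiplicity-versus-deformation argument; your direct computations only genuinely add an elementary treatment of the $k\leq 2$ cases, which the paper's argument subsumes anyway.
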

\begin{proof}
  Assume to the contrary that there exists a stratum $Z$ of the snc locus of $(X, D)$
  contained in $H$.  Let $\eta$ be the generic point of $Z$. By assumption $\eta\in
  H$ and $(X,D)$ is snc at $\eta$, so $\O_{X, \eta}$ is a regular ring.  Let $n =
  \dim \O_{X,\eta}$.  Replace $X$ by $\Spec \O_{X, \eta}$ and $H$ and $D$ by their
  pullbacks to this local scheme (in this step we lose projectivity, but we will not
  need that for now).  Note that $D$ is now Cartier and in fact snc.  Furthermore $D
  + H$ has $n+1$ irreducible components containing $\eta$, so $(X, D+H)$ cannot be
  Du Bois (or equivalently log canonical since $X$ is Gorenstein).  But as we observed, $(H, D\cap H)$ is a \DB pair.  
  Then by \autoref{thm.MainTheoremDeformation} again we see that $(X, D+H)$ is \DB as well.  This is a contradiction.
\end{proof}

Next we setup the notation for the proof of \autoref{thm.InvOfAdjForDuBoisPairs}.
Let $\Sigma$ denote the non-snc locus of $(X, D)$.
Observe that as $X$ is normal and $D$ is a reduced divisor by
\autoref{lemma.XNormal}, we have that $\codim_X(\Sigma) \geq 2$.

Additionally assume that $\pi : Y \to X$ is a log resolution
of $(X, D \cup H \cup \Sigma)$ which simultaneously gives a thrifty resolution of
$(X, D)$.  To see such a $\pi$ exists, first take a thrifty resolution $(U, D_U)$ of
$(X, D)$ and then perform a log resolution of the scheme-theoretic preimages of $H$
and $\Sigma$ on $U$ (while keeping the strict transform $D_U$ snc).  The result
can be assumed to be a thrifty resolution of $(X, D)$ since the preimages of $\Sigma$
and $H$ do not contain any strata of $(U, D_U)$ by \autoref{lem.NoStratumOfXDinH}.

Set $\overline{H}$ to be the reduced total transform of $H$ and $\overline{D}$ the
reduced total transform of $D$, set $D_Y$ to be the strict transform of $D$ and set
$E$ to be $\big(\pi^{-1}(\Sigma)\big)_{\red}$.

\begin{proof}[Proof of \autoref{thm.InvOfAdjForDuBoisPairs}]
  Clearly $(X, D)$ is a \DB pair and all the fibers $(X_b, D_b)$ are \DB by
  \autoref{cor.DBDeformsArbitraryBase} and \autoref{cor.DBDeformsArbitraryBaseFibers}
  (possibly after shrinking the base $B$ around $A$).  By
  \autoref{cor.CMClosedDuBoisFamilies}, we know that $\O_X(-D)$ is Cohen-Macaulay.
  Thus by the local dual version of the local-cohomological criterion for
  Cohen-Macaulayness, $\myR \sHom_{\O_X}^{\mydot}(\O_X(-D), \omega_X^{\mydot})$ has
  cohomology only in one term.  In particular,
  \begin{equation}
    \label{eq.AppOfPairKollarKovacs}
    \myR \sHom_{\O_X}^{\mydot}(\O_X(-D),
    \omega_X^{\mydot}) \qis \sHom_{\O_X}(\O_X(-D), \omega_X)[\dim X] \simeq
    \omega_X(D)[\dim X].
  \end{equation}
  Therefore by \autoref{cor:rational-pairs-criterion} it suffices to show that
  $\omega_X(D) \simeq \pi_* \omega_Y(D_Y)$.

  Next observe that $(H, D|_H)$ is a Du~Bois pair by \autoref{cor.DBDeformsArbitraryBase} and hence by
  \autoref{lem.TwoOfThreeDuBoisForPairs} we see that $(X, D \cup H) = (X, D+H)$ is a
  \DB pair.

  \begin{claim}
    \label{clm.PiPushforwardWithoutE}
    With notation as above, $\pi_* \omega_{Y}(D_Y \vee \overline{H} \vee E) \simeq
    \pi_* \omega_{Y}(D_Y + \overline{H} )$.
  \end{claim}

  \noindent
  Note that $D_Y + \overline{H} = D_Y \vee \overline{H}$ since the divisors have no
  common components.

  \begin{proof}[Proof of \autoref{clm.PiPushforwardWithoutE}]
    The containment $\supseteq$ is obvious since $D$ and $H$ do not share a component
    (cf.\ \autoref{rem.additional.conditions}(a)) so choose $f \in \pi_*
    \omega_{Y}(D_Y \vee \overline{H} \vee E)$.  We observe that
    \[
    \Div_Y(f) + K_Y + D_Y \vee \overline{H} \vee E = \Div_Y(f) + K_Y + D_Y + \overline{H} \vee E \geq
    0.
    \]
    Working on $U = Y \setminus \overline{H} = \pi^{-1}(X \setminus H)$ we see
    that $\Div_U(f) + K_{U} + D_Y|_U + E|_U \geq 0$.  But since $(X\setminus H, D
    \setminus H)$ is a rational pair,
    \[
    \pi_* \omega_{U}(D_Y|_U) = \pi_* \omega_{U}(D_Y|_U + E) = \omega_{X \setminus
      H}(D|_{X \setminus H}),
    \]
    so $\Div_U(f) + K_{U} + D_Y|_U + E|_U \geq 0$ is equivalent to $\Div_U(f) + K_U+
    D_Y|_U \geq 0$.  Because the components of $E$ that lie over $H$ are also
    components of $\overline{H}$, it follows that $\Div_{Y}(f)+ K_{Y} + D_Y +
    \overline{H} \geq 0$.  This proves \autoref{clm.PiPushforwardWithoutE}.
  \end{proof}

  By \autoref{lem.PairEqualsPushdown} we see that $\myH^{-\dim X} (\DBDual{X, D+H})
  \simeq \pi_* \omega_Y(D_Y \vee \overline{H} \vee E)$ which agrees with $\pi_*
  \omega_Y(D_Y + \overline{H})$ by the claim.  Since $(X, D+H)$ is a Du~Bois pair,
  $\myH^{-\dim X} (\DBDual{X, D+H}) \simeq \omega_X(D+H)$ and so in conclusion we have
  that
  \[
  \omega_X(D+H) \simeq \pi_* \omega_Y(D_Y + \overline{H}).
  \]
  Twisting both sides by $-H$ and using the projection formula we see that
  \[
  \omega_X(D) \simeq \pi_* \omega_Y(D_Y - (\pi^* H - \overline{H})) \subseteq \pi_*
  \omega_Y(D_Y)
  \]
  since $\pi^* H - \overline{H}$ is effective.  But $\pi_* \omega_Y(D_Y) \subseteq
  \omega_X(D)$ for any normal pair $(X, D)$ and so $\omega_X(D) \simeq \pi_*
  \omega_Y(D_Y)$ as desired.
\end{proof}

\noindent
Setting $D = 0$ we obtain the following:

\begin{corollary}
  \label{cor.DBRationalDeform}
  Let $f : X \to B$ be a flat projective family over a smooth base $B$ and $H =
  f^{-1}(0)$ a special fiber.  Assume that $H$ has Du Bois singularities and that $X
  \setminus H$ has rational singularities.  Then $X$ has rational singularities.
\end{corollary}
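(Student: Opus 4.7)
The plan is to derive this corollary as the special case $D = \emptyset$, $A = \{0\}$ of Theorem~\autoref{thm.InvOfAdjForDuBoisPairs}. The work splits into verifying the hypotheses of that theorem in the present setting and then unpacking the conclusion into the classical language of rational singularities.

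For the hypotheses, the question is local around $H$, so I would first shrink $B$ to a connected open neighbourhood of $0$ on which $\{0\}$ is a smooth Cartier divisor (the case $\dim B = 0$ being vacuous, since then $X = H$ is forced to be both Du~Bois and to have rational singularities on the empty complement, which says nothing new). The boundary $D = \emptyset$ is trivially a reduced subscheme flat over $B$, and $D \cap H = \emptyset$, so $(X_0, D_0) = H$ is Du~Bois by hypothesis and $(X \setminus H, D \setminus H) = X \setminus H$ is a rational pair (equivalently, has rational singularities). The one point requiring care is geometric integrality of $X$: since $X \setminus H$ has rational singularities it is normal, and $H$ is a Cartier divisor in $X$ because $f$ is flat and $\{0\}$ is a Cartier divisor on the smooth $B$; hence the argument of Lemma~\autoref{lemma.XNormal} goes through with $D = \emptyset$ and shows that $X$ is normal along $H$, hence normal everywhere. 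Connectedness of $X$ (and therefore integrality) is inherited from the connectedness of $B$ together with that of $X \setminus H$, which one may assume by restricting to the connected component of $X$ meeting $H$.

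Applying Theorem~\autoref{thm.InvOfAdjForDuBoisPairs} then yields that $(X, \emptyset)$ is a rational pair. Unwinding Definition~\autoref{def.RationalPairs} with $D = D_Y = \emptyset$, a thrifty resolution of $(X, \emptyset)$ is simply a resolution of singularities $\pi \colon Y \to X$, and the three defining conditions collapse to $\O_X \simeq \pi_* \O_Y$ and $\myR^i \pi_* \O_Y = 0$ for $i > 0$ (condition (iii), namely $\myR^i \pi_* \omega_Y = 0$ for $i > 0$, is automatic in characteristic zero by Grauert--Riemenschneider, or equivalently by Theorem~\autoref{thm.GRVanishingForThrifty}). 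These are precisely the conditions that $X$ have rational singularities, which finishes the argument. There is essentially no obstacle here, as the content is entirely contained in Theorem~\autoref{thm.InvOfAdjForDuBoisPairs}; the only things to guard against are misinterpreting the empty boundary and dropping the geometric integrality hypothesis in the reduction.
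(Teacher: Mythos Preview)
Your proposal is correct and follows exactly the paper's approach: the paper's entire proof is the single sentence ``Setting $D = 0$ we obtain the following,'' and you carry out precisely this specialization, supplying additional detail (verifying geometric integrality via \autoref{lemma.XNormal}, unpacking the rational-pair condition for empty boundary) that the paper leaves implicit.
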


There is a variant of our inversion of adjunction theorem that we would also like to
prove (even in the $D = 0$ case).
\begin{conjecture}
  Assume $(X, D)$ is a pair with $D$ a reduced Weil divisor.  Further assume that $H$
  is a Cartier divisor on $X$ not having any components in common with $D$ such that
  $(H, D\cap H)$ is \DB and such that $(X \setminus H, D\setminus H)$ is a rational
  pair.  Then $(X, D)$ is a rational pair.
\end{conjecture}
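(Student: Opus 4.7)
The plan is to mirror the proof of \autoref{thm.InvOfAdjForDuBoisPairs}, replacing each appeal to family-specific results by its non-family analogue where available. Note that by the standing hypothesis that $(X,D)$ is a reduced pair, $X$ is already normal, so the normality step of \autoref{lemma.XNormal} is free.

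First, by \autoref{thm.MainTheoremDeformation} the pair $(X,D)$ is Du Bois in a neighborhood of $H$. Since every rational pair is Du Bois, $(X\setminus H, D\setminus H)$ is Du Bois as well, and hence $(X, D)$ is globally a Du Bois pair. Applying \autoref{lem.TwoOfThreeDuBoisForPairs} to $(X,D)$ and $(H, D\cap H)$ then shows that $(X, D\cup H) = (X, D+H)$ is Du Bois. The argument of \autoref{lem.NoStratumOfXDinH} adapts essentially verbatim to our setting: if some stratum of the snc locus of $(X,D)$ were contained in $H$, localizing at its generic point would produce $n+1$ simple normal crossing components in a regular $n$-dimensional local ring, contradicting that $(X, D+H)$ is Du Bois. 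Consequently, as in the proof of \autoref{thm.InvOfAdjForDuBoisPairs}, we may choose a log resolution $\pi\colon Y\to X$ of $(X, D\cup H\cup \Sigma)$ that is simultaneously a thrifty resolution of $(X,D)$, where $\Sigma$ denotes the non-snc locus.

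The crucial step is to establish that $\sI_D$ is Cohen-Macaulay. This is precisely the open question raised at the end of Section~\ref{sec.KollarKovacsCMForPairs}: by \autoref{cor:rational-pairs-criterion} together with Grothendieck duality, $\sI_D|_{X\setminus H}$ is already Cohen-Macaulay, and $(H, D\cap H)$ is Du Bois, so an affirmative answer to that question would deliver the Cohen-Macaulayness of $\sI_D$ globally. A natural avenue of attack is to exploit the short exact sequence
\[
0 \to \sI_D(-H) \to \sI_D \to \sI_{D\cap H \subseteq H} \to 0,
\]
which is valid because $H$ shares no component with $D$ or $X$ (using \autoref{lem.transversality}), and to apply the injectivity \autoref{thm.MainInjectivityForPairs} to the pair $(H, D\cap H)$ together with local/Grothendieck duality in order to control the surplus local-cohomology along $H$. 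This Cohen-Macaulayness step is the main obstacle: without the family structure that underlies \autoref{cor.CMClosedDuBoisFamilies}, we cannot just invoke the Koll\'ar--Kov\'acs machinery of Section~\ref{sec.KollarKovacsCMForPairs}.

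Granting Cohen-Macaulayness, the remainder of the proof follows the end of the argument of \autoref{thm.InvOfAdjForDuBoisPairs} without change. By the local-cohomological criterion for Cohen-Macaulayness, $\myR\sHom_{\sO_X}^{\mydot}(\sI_D,\omega_X^{\mydot}) \simeq \omega_X(D)[\dim X]$ is concentrated in a single degree, so by \autoref{cor:rational-pairs-criterion} it suffices to show $\omega_X(D) \simeq \pi_*\omega_Y(D_Y)$. Since $(X, D+H)$ is Du Bois, \autoref{lem.PairEqualsPushdown} yields $\omega_X(D+H) \simeq \pi_*\omega_Y(D_Y \vee \overline{H} \vee E)$, and the argument of \autoref{clm.PiPushforwardWithoutE} reduces this to $\pi_*\omega_Y(D_Y + \overline{H})$. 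Twisting by $-H$, invoking the projection formula, using that $\pi^*H - \overline H$ is effective, and combining with the standard inclusion $\pi_*\omega_Y(D_Y) \subseteq \omega_X(D)$ then gives the required isomorphism $\omega_X(D) \simeq \pi_*\omega_Y(D_Y)$, completing the argument. Thus the entire conjecture reduces, given all the apparatus developed in the paper, to the Cohen-Macaulayness question of Section~\ref{sec.KollarKovacsCMForPairs}.
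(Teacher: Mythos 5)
Your proposal does not prove the statement, and you say as much yourself: it is a (correct) reduction of the conjecture to the Cohen--Macaulayness of $\sI_D=\sO_X(-D)$ along $H$, which is precisely the open Question posed at the end of \autoref{sec.KollarKovacsCMForPairs}. Indeed, this statement appears in the paper \emph{as a conjecture} for exactly the reason you isolate: the authors note that every step of the proof of \autoref{thm.InvOfAdjForDuBoisPairs} goes through in this local setting except the Cohen--Macaulayness of $\sO_X(-D)$, which in the family case was supplied by \autoref{cor.CMClosedDuBoisFamilies} and has no non-proper analogue in the paper. So your outline of the surrounding steps (Du~Bois-ness of $(X,D)$ and $(X,D+H)$ near $H$ via \autoref{thm.MainTheoremDeformation} and \autoref{lem.TwoOfThreeDuBoisForPairs}, the no-stratum-in-$H$ argument, the thrifty resolution, and the final computation $\omega_X(D)\simeq\pi_*\omega_Y(D_Y)$ via \autoref{lem.PairEqualsPushdown} and the claim about discarding $E$) agrees with the intended argument, but the ``crucial step'' is genuinely missing, not merely deferred.

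Moreover, the avenue you sketch does not obviously close the gap. The sequence $0\to\sI_D(-H)\to\sI_D\to\sI_{D\cap H\subseteq H}\to 0$ together with \autoref{thm.MainInjectivityForPairs} controls the comparison between $\DBDual{H,D\cap H}$ and $\myR\sHom^{\mydot}_{\sO_H}(\sI_{D\cap H\subseteq H},\omega_H^{\mydot})$, but nothing in the hypotheses makes $\sI_{D\cap H\subseteq H}$ Cohen--Macaulay, and Du~Bois-ness of $(H,D\cap H)$ only yields injectivity/surjectivity of certain maps on (local) cohomology, not the vanishing of the extra cohomology of $\myR\sHom^{\mydot}_{\sO_X}(\sI_D,\omega_X^{\mydot})$ supported in $H$. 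In the projective family setting that vanishing came from flatness and base change for the sheaves $\myH^{-i}\myR\sHom^{\mydot}_{\sO_X}(\sI_Z,\omega_f^{\mydot})$ (\autoref{thm.MainBaseChangeKollarKovacs}, which rests on cyclic covers, properness, and \autoref{thm.DuBoisJarraudForPairs}) so that the Cohen--Macaulayness of the general fibers propagates to the special one; there is no local substitute in your argument, and the only evidence the paper offers is a characteristic $p>0$ analogue for $Z=\emptyset$. As written, then, your proposal is a conditional argument whose unproved hypothesis is the actual content of the conjecture.
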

The only place where our proof above does not work in this situation is when we prove
that $\O_X(-D)$ is Cohen-Macaulay.  In particular, to accomplish this generalization,
we would simply need a version of \autoref{cor.CMClosedDuBoisFamilies} that is not
tied to a projective or proper family. What is missing is exactly a positive answer
to \autoref{quest.CMOutsideOfDB}.

\bibliographystyle{skalpha}
\bibliography{MainBib}
\end{document}